\theoremstyle{plain}
\newtheorem*{lemma*}{Lemma}
\newtheorem{lemma}{Lemma}
\newtheorem*{theorem*}{Theorem}
\newtheorem{theorem}{Theorem}
\newtheorem*{proposition*}{Proposition}
\newtheorem{proposition}{Proposition}
\newtheorem*{corollary*}{Corollary}
\newtheorem{corollary}{Corollary}
\newtheorem*{claim*}{Claim}
\theoremstyle{definition}
\newtheorem*{assumption*}{Assumption}
\newtheorem*{definition*}{Definition}
\newtheorem*{convention*}{Convention}
\newtheorem*{example*}{Example}
\newtheorem*{algorithm*}{Algorithm}
\newtheorem*{remark*}{Remark}
\newtheorem{remark}{Remark}
\newtheorem*{remarks*}{Remarks}
\newtheorem*{openproblem*}{Open Problem}
\newtheorem{openproblem}{Open Problem}
\numberwithin{equation}{section}
\def\al{\alpha}
\def\be{\beta}
\def\ga{\gamma}
\def\de{\delta}
\def\ep{\epsilon}
\def\th{\theta}
\def\la{\lambda}
\def\rh{\rho}
\def\si{\sigma}
\def\ta{\tau}
\def\ph{\phi}
\def\vh{\varphi}
\def\ps{\psi}
\def\Ga{\Gamma}
\def\De{\Delta}
\def\La{\Lambda}
\def\Ph{\Phi}
\def\Om{\Omega}
\def\C{\mathbb{C}}
\def\N{\mathbb{N}}
\def\R{\mathbb{R}}
\def\Z{\mathbb{Z}}
\def\cA{\mathcal{A}}
\def\cB{\mathcal{B}}
\def\cC{\mathcal{C}}
\def\cF{\mathcal{F}}
\def\cI{\mathcal{I}}
\def\cJ{\mathcal{J}}
\def\cK{\mathcal{K}}
\def\cL{\mathcal{L}}
\def\p{\partial}
\def\oI{\overline I}
\def\ol{\overline}
\def\ul{\underline}
\def\Hoeld{\on{H\ddot{o}ld}}
\def\Lip{\on{Lip}}
\def\const{M}
\renewcommand{\Re}{\mathrm{Re}}
\renewcommand{\Im}{\mathrm{Im}}
\def\<{\langle}
\def\>{\rangle}
\renewcommand{\o}{\circ}
\def\cq{{/\!\!/}}
\let\on=\operatorname
\newcommand{\sr}[1]%
{\ifmmode{}^\dagger\else${}^\dagger$\fi\ifvmode
\vbox to 0pt{\vss
 \hbox to 0pt{\hskip\hsize\hskip1em
 \vbox{\hsize3cm\raggedright\pretolerance10000
 \noindent #1\hfill}\hss}\vss}\else
 \vadjust{\vbox to0pt{\vss%
 \hbox to 0pt{\hskip\hsize\hskip1em%
 \vbox{\hsize3cm\raggedright\pretolerance10000%
 \noindent #1\hfill}\hss}\vss}}\fi%
}
\title[Optimal Sobolev regularity of roots of polynomials]
{Optimal Sobolev regularity of roots of polynomials}
\author[Adam Parusi\'nski and  Armin Rainer]
{Adam Parusi\'nski and Armin Rainer}
\address {Adam Parusi\'nski: Univ. Nice Sophia Antipolis, CNRS,  LJAD, UMR 7351, 06108 Nice, France}
\email{adam.parusinski@unice.fr}
\address{Armin Rainer: Fakult\"at f\"ur Mathematik, Universit\"at Wien, 
Oskar-Morgenstern-Platz~1, A-1090 Wien, Austria}
\email{armin.rainer@univie.ac.at}
\begin{document}

\begin{abstract}
  We study the regularity of the roots of complex univariate polynomials 
  whose coefficients depend smoothly on parameters. We show that any 
  continuous choice of the roots of a $C^{n-1,1}$-curve of monic polynomials 
  of degree $n$ is locally absolutely continuous with locally $p$-integrable 
  derivatives for every $1 \le p < n/(n-1)$, uniformly with respect to the coefficients. 
  This result is optimal: in general, the derivatives of the roots of a smooth 
  curve of monic polynomials of degree $n$ are not locally $n/(n-1)$-integrable, 
  and the roots may have locally unbounded variation if the coefficients are only 
  of class $C^{n-1,\alpha}$ for $\alpha <1$. 
  We also prove a generalization of Ghisi and Gobbino's higher order Glaeser inequalities. 
  We give three applications of the main results: local solvability of a system of 
  pseudo-differential equations, a lifting theorem for mappings into orbit spaces of finite 
  group representations, and a sufficient condition for multi-valued functions to be 
  of Sobolev class $W^{1,p}$ in the sense of Almgren.  
\end{abstract}

\thanks{Supported by the Austrian Science Fund (FWF), Grant P~26735-N25, and by ANR project STAAVF (ANR-2011 BS01 009).}
\keywords{Perturbation of complex polynomials, absolute continuity of roots, 
optimal regularity of the roots among Sobolev spaces $W^{1,p}$, 
higher order Glaeser inequalities}
\subjclass[2010]{
26C10, 
26A46, 
26D10, 
30C15, 
46E35} 
\date{\today}
 
\maketitle

\tableofcontents

\section{Introduction}

This paper is dedicated to the problem of determining the optimal regularity of the roots of 
univariate polynomials whose coefficients depend smoothly on parameters. There is a vast literature on this 
problem, but most contributions treat special cases: 
\begin{itemize}
    \item the polynomial is assumed to have only real roots 
    (\cite{Bronshtein79}, \cite{Mandai85}, \cite{Wakabayashi86}, \cite{AKLM98}, \cite{KLM04}, 
    \cite{BBCP06}, \cite{BonyColombiniPernazza06}, \cite{Tarama06}, \cite{BonyColombiniPernazza10}, 
    \cite{ColombiniOrruPernazza12}, 
    \cite{ParusinskiRainerHyp}),
    \item only radicals of functions are considered (\cite{Glaeser63R}, \cite{CJS83}, \cite{Tarama00}, \cite{CL03}, 
    \cite{GhisiGobbino13}),    
    \item it is assumed that the roots meet only of finite order, e.g., 
    if the coefficients are real analytic or in some other quasianalytic class,
    (\cite{CC04}, \cite{RainerAC}, \cite{RainerQA}, \cite{RainerOmin}, \cite{RainerFin}),
    \item quadratic and cubic polynomials (\cite{Spagnolo99}), etc. 
\end{itemize}  
In this paper we consider the general case: let $(\al,\be) \subseteq \R$ be a bounded open interval and let 
  \begin{equation} \label{curveofpolynomials}
    P_a(t)(Z)= P_{a(t)}(Z) = Z^n + \sum_{j=1}^n a_j(t) Z^{n-j}, \quad t \in (\al,\be), 
  \end{equation}
be a monic polynomial whose coefficients are complex valued smooth functions $a_j : (\al,\be) \to \C$, $j = 1,\ldots,n$. 
It is not hard to see that $P_a$ always admits a continuous system of roots (e.g.\ \cite[Ch.~II Theorem~5.2]{Kato76}), 
but in general the roots cannot satisfy a local Lipschitz condition. 
For a long time it was unclear whether the roots of $P_a$ admit locally absolutely continuous parameterizations. 
This question was 
affirmatively solved in our recent paper \cite{ParusinskiRainerAC}: there is a positive integer $k=k(n)$ and a 
rational number $p=p(n)>1$ such that, if the coefficients are of class $C^k$, 
then each continuous root $\la$ is locally absolutely continuous 
with derivative $\la'$ being locally $q$-integrable for each $1 \le q < p$, uniformly with respect to the coefficients. 

The problem of absolute continuity of the roots arose in the analysis of certain systems of pseudo-differential 
equations due to Spagnolo \cite{Spagnolo00}; see Section \ref{PDE}.  
For the history of the problem we refer to the introduction of \cite{ParusinskiRainerAC}. 
The main tool of \cite{ParusinskiRainerAC} was the resolution of singularities. With this technique we could not determine the 
optimal parameters $k$ and $p$.

\subsection{Main results}

In the present paper we prove the optimal result by elementary methods.
Our main result is the following theorem.

\begin{theorem} \label{main}
  Let $(\al,\be) \subseteq \R$ be a bounded open interval and let $P_a$   
  be a monic polynomial \eqref{curveofpolynomials} with coefficients $a_j \in C^{n-1,1}([\al,\be])$, $j = 1,\ldots,n$. 
  Let $\la \in C^0((\al,\be))$ be a continuous root of $P_a$ on $(\al,\be)$.
  Then $\la$ is absolutely continuous on $(\al,\be)$ and belongs to the Sobolev space 
  $W^{1,p}((\al,\be))$ for every $1 \le p < n/(n-1)$. The derivative $\la'$ satisfies  
  \begin{align} \label{bound} 
   \| \la' \|_{L^p((\al,\be))}  
   &\le C(n,p) \max\{1, (\be-\al)^{1/p}\} 
   \max_{1 \le j \le n} \|a_j\|^{1/j}_{C^{n-1,1}([\al,\be])},
  \end{align}
  where the constant $C(n,p)$ depends only on $n$ and $p$.
\end{theorem}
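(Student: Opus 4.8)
The plan is to prove Theorem~\ref{main} by induction on the degree $n$, reducing the estimate on $\la'$ to a pointwise bound on $|\la'(t)|$ in terms of the coefficients and then integrating. The key structural tool is the classical splitting: after a Tschirnhausen transformation we may assume $a_1 \equiv 0$, so that the roots sum to zero; then either all roots stay within a controlled distance of the origin (the ``clustered'' case), or the polynomial factors. More precisely, I would first establish, for a \emph{single} continuous root $\la$, the pointwise differential inequality
\begin{equation*}
  |\la'(t)|^n \le C(n) \max_{2 \le j \le n} \bigl( |a_j(t)| + |a_j'(t)| \bigr)^{n/j} \cdot \bigl( \text{something controllable} \bigr),
\end{equation*}
using that $\la$ is a root of $P_a$ and differentiating $P_a(t)(\la(t)) = 0$ to get $\la'(t) = -\dot P_a(t)(\la(t))/P_a'(t)(\la(t))$ wherever $P_a'(t)(\la(t)) \ne 0$, i.e.\ away from multiple roots. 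The subtlety is precisely at (and near) the multiple roots, where the denominator vanishes; there one must exploit the $C^{n-1,1}$ regularity of the coefficients to control how fast roots can approach each other. This is where the higher order Glaeser-type inequalities mentioned in the abstract enter: a function of class $C^{n-1,1}$ that vanishes to high order has a derivative controlled by a fractional power of the function, which converts vanishing of $P_a'(\la)$ into a quantitative lower bound after renormalization.

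Second, I would set up the \textbf{splitting/rescaling scheme}. Fix a point $t_0$ and let $r = \max_j |a_j(t_0)|^{1/j}$ be the natural scale (the ``radius'' of the root cluster). Rescale $Z \mapsto rZ$ and $t \mapsto t_0 + \rho s$ for an appropriate time scale $\rho$ chosen so that the rescaled coefficients $\tilde a_j(s) = a_j(t_0 + \rho s)/r^j$ have $C^{n-1,1}$ norms bounded by an absolute constant. On this rescaled problem, either the roots genuinely split into clusters separated by a definite distance — in which case $P_a = P_b \cdot P_c$ with $\deg P_b, \deg P_c < n$ and the coefficients of $P_b, P_c$ inherit $C^{n-1,1}$ bounds (by the implicit function theorem / Glaeser inequalities applied to the splitting), so the inductive hypothesis applies to each factor — or the roots remain clustered, in which case the pointwise bound above gives $|\la'| \le C$ on the rescaled interval, hence $\la' \in L^\infty \subset L^p$ locally with the right constant. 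Undoing the rescaling turns the $L^\infty$ bound on the cluster scale into exactly the $L^p$ bound \eqref{bound}: the exponent $n/(n-1)$ appears because the measure of the set of times where the cluster has size $\sim \epsilon$ scales like $\epsilon^{?}$ against a derivative of size $\sim \epsilon^{-(n-1)}$, and $p < n/(n-1)$ is exactly the borderline for integrability. I would organize the time axis via a Besicovitch/Vitali covering by such rescaled intervals at dyadic scales and sum the contributions geometrically.

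Third, the \textbf{integration step}: on each dyadic scale $2^{-k} \le r(t) < 2^{-k+1}$ one has, from the clustered estimate, $|\la'(t)| \lesssim 2^{-k}/\rho_k$ where $\rho_k \sim 2^{-k} \cdot 2^{-k(n-1)} = 2^{-kn}$ is forced by the requirement that the rescaled $C^{n-1,1}$ norms be $O(1)$ (the $n$-th order condition $\|a_j^{(n)}\|_\infty$ rescales as $\rho^{n-1}/r^{j} \cdot \dots$, pinning $\rho \sim r^{?}$). Then $|\la'(t)| \lesssim 2^{k(n-1)}$, while the total measure of times at scale $k$ is $\lesssim 2^{-kn}$ times the number of covering intervals — bounded using that the coefficients are globally $C^{n-1,1}$ — so $\int |\la'|^p \lesssim \sum_k 2^{kp(n-1)} 2^{-kn}$, which converges precisely when $p(n-1) < n$, i.e.\ $p < n/(n-1)$. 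The dependence on $\be - \al$ and on $\max_j \|a_j\|_{C^{n-1,1}}^{1/j}$ comes out by tracking constants through the rescaling.

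I expect the \textbf{main obstacle} to be the splitting lemma with uniform constants: showing that when the roots of the rescaled polynomial separate into two clusters at a definite distance, the two factor polynomials have coefficients whose $C^{n-1,1}$ norms are bounded by an absolute constant (so that induction closes with a \emph{uniform} constant $C(n,p)$, not one that degrades at each level). This requires a quantitative division-with-remainder / Hensel-type argument in the $C^{n-1,1}$ category, and is exactly the place where the generalized higher-order Glaeser inequalities are indispensable — they are what let one bound derivatives of the factors (which involve dividing by the resultant of the two clusters, a quantity that is bounded below on the separated regime but whose derivatives must be estimated via the fact that it is a $C^{n-1,1}$ function bounded away from zero). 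A secondary technical point is handling the transition between the clustered and split regimes continuously along $t$, which I would manage by always working on the cluster scale $r(t)$, which varies continuously (indeed is locally Lipschitz up to the right power), and absorbing the overlaps into the covering constant.
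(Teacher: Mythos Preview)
Your outline shares the right skeleton with the paper---induction on degree, Tschirnhausen transformation, and splitting $P_{\tilde a}$ into lower-degree factors near points where not all coefficients vanish---but the mechanism you propose for closing the induction has a genuine gap, and it is not the one you flag as the main obstacle.

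The splitting-with-uniform-constants issue you worry about is real but manageable: the paper handles it by a compactness argument (a ``universal splitting''), observing that after rescaling $Z \mapsto \tilde a_k^{1/k} Z$ the normalized coefficient vector lies in a fixed compact set, on which finitely many local splittings (from the inverse function theorem) suffice, with uniformly bounded analytic data.

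The actual gap is in your integration step. You claim that at dyadic scale $r(t) \sim 2^{-k}$ one has $|\la'| \lesssim 2^{k(n-1)}$ and that ``the total measure of times at scale $k$ is $\lesssim 2^{-kn}$ times the number of covering intervals---bounded using that the coefficients are globally $C^{n-1,1}$.'' This is unjustified and, as stated, false: nothing prevents $r(t) = \max_j |a_j(t)|^{1/j}$ from lingering near any fixed level on a set of large measure (take the $a_j$ nearly constant). The $C^{n-1,1}$ hypothesis bounds derivatives of the $a_j$, not the measure of level sets of $r$. Without that measure bound your dyadic sum diverges. Your first-paragraph formula $\la' = -\dot P_a(\la)/P_a'(\la)$ does not rescue this; it breaks down exactly where the difficulty lies and the paper never uses it.

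The paper's mechanism is different. Around each point $t_1$ it chooses an interval $J$ not by a dyadic scale but by an \emph{equality}
\[
  |J|\,|I|^{-1}|\tilde a_k(t_0)|^{1/k} \;+\; \sum_{i=2}^{n_b} \|(\tilde b_i^{1/i})'\|_{L^1(J)} \;=\; D\,|\tilde b_\ell(t_1)|^{1/\ell},
\]
where the $\tilde b_i$ are the coefficients of the split factor in Tschirnhausen form. The left side is finite because $(\tilde b_i^{1/i})' \in L^{i'}_w \subseteq L^1$ by the Ghisi--Gobbino radical estimate (this is where $C^{n-1,1}$ is actually spent), and the right side is precisely the quantity that bounds the normalized $L^p$-norm of $\la'$ on $J$ after one more splitting plus the induction hypothesis. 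Because the balance is an equality, the local bound on $J$ converts back into the $L^p$-norms appearing on the right of the inductive estimate. A dedicated covering proposition then extracts from $\{J(t_1)\}_{t_1}$ a countable subcollection covering $I'$ with overlap at most two, \emph{without shrinking the intervals} (so the balance equality survives); summing $\|\la'\|_{L^p(J)}^p$ over this cover and using $\si$-additivity of $\|\cdot\|_{L^p}^p$ closes the induction with a constant depending only on $n$ and $p$. This is also why the conclusion is for $p < n/(n-1)$ rather than $L^{n/(n-1)}_w$: the weak quasinorm is not $\si$-additive, so the gluing step forces the passage to strong $L^p$.
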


A well-known estimate for the Cauchy bound of a polynomial (cf.\ \cite[p.56]{Malgrange67} or \cite[{(8.1.11)}]{RS02}) 
gives $|\la(t)| \le 2 \max_{1 \le j \le n} |a_j(t)|^{1/j}$ for all $t \in (\al,\be)$, and hence 
\begin{equation*}
  \| \la \|_{L^p((\al,\be))} \le C(n)  (\be-\al)^{1/p} \max_{1 \le j \le n} \|a_j\|_{L^\infty((\al,\be))}^{1/j}.
\end{equation*}
It follows that 
\begin{align} \label{Wbound} 
   \| \la \|_{W^{1,p}((\al,\be))}  
   &\le C(n,p) \max\{1, (\be-\al)^{1/p}\} 
   \max_{1 \le j \le n} \|a_j\|^{1/j}_{C^{n-1,1}([\al,\be])},
  \end{align}
An application of H\"older's inequality yields the following corollary.

\begin{corollary}
  Every continuous root of $P_a$ on $(\al,\be)$ is H\"older continuous of exponent $\ga = 1-1/p < 1/n$, 
  and 
  \begin{equation} \label{Cbound}
     \| \la \|_{C^{0,\ga}([\al,\be])}  
   \le  C(n,p) \max\{1, (\be-\al)^{1/p}\} 
   \max_{1 \le j \le n} \|a_j\|^{1/j}_{C^{n-1,1}([\al,\be])}.
  \end{equation}
\end{corollary}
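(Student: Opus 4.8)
The plan is to deduce this directly from Theorem~\ref{main} together with the one–dimensional Sobolev embedding $W^{1,p}((\al,\be)) \hookrightarrow C^{0,1-1/p}([\al,\be])$ for $p>1$ on a bounded interval (Morrey's inequality). First I would invoke Theorem~\ref{main} to obtain, for any fixed $1 \le p < n/(n-1)$, that the continuous root $\la$ lies in $W^{1,p}((\al,\be))$ and satisfies the derivative bound \eqref{bound}. Since $\la$ is absolutely continuous, for all $s,t \in (\al,\be)$ with $s<t$ one may write $\la(t)-\la(s) = \int_s^t \la'(r)\,dr$; applying H\"older's inequality with conjugate exponents $p$ and $p'=p/(p-1)$ gives
\begin{equation*}
  |\la(t)-\la(s)| \le \|\la'\|_{L^p((\al,\be))}\, |t-s|^{1-1/p},
\end{equation*}
so that the H\"older seminorm is controlled by $[\la]_{C^{0,\ga}([\al,\be])} \le \|\la'\|_{L^p((\al,\be))}$ with $\ga = 1-1/p$.

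Next I would record that the constraint $1 \le p < n/(n-1)$ is equivalent to $1/p > (n-1)/n$, hence to $\ga = 1-1/p < 1/n$, which yields the asserted range of the H\"older exponent. For the sup-norm part of the $C^{0,\ga}$ norm I would simply reuse the Cauchy bound already stated in the excerpt, namely $\|\la\|_{L^\infty((\al,\be))} \le C(n)\max_{1\le j\le n}\|a_j\|_{L^\infty((\al,\be))}^{1/j} \le C(n)\max_{1\le j\le n}\|a_j\|_{C^{n-1,1}([\al,\be])}^{1/j}$. Combining this with the seminorm estimate and the bound \eqref{bound}, and absorbing the various numerical factors into a single constant $C(n,p)$, gives
\begin{equation*}
  \|\la\|_{C^{0,\ga}([\al,\be])} = \|\la\|_{L^\infty((\al,\be))} + [\la]_{C^{0,\ga}([\al,\be])}
  \le C(n,p)\max\{1,(\be-\al)^{1/p}\}\max_{1\le j\le n}\|a_j\|_{C^{n-1,1}([\al,\be])}^{1/j},
\end{equation*}
which is exactly \eqref{Cbound}.

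I do not anticipate a genuine obstacle here: the statement is a formal consequence of Theorem~\ref{main} via the elementary fundamental-theorem-of-calculus estimate above, and the only points requiring care are the elementary equivalence $p < n/(n-1) \Leftrightarrow \ga < 1/n$ and the bookkeeping that lets one merge the constants from \eqref{bound}, \eqref{Wbound}, and the Cauchy bound into the single constant $C(n,p)$ appearing in \eqref{Cbound}.
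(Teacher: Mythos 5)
Your proof is correct and follows essentially the same route as the paper: the paper's own proof is precisely the one-line fundamental-theorem-of-calculus-plus-H\"older estimate $|\la(t) - \la(s)| \le |\int_s^t \la' \,d\ta| \le \|\la'\|_{L^p((\al,\be))} |t-s|^{1-1/p}$, after which the range $\ga < 1/n$ and the bound \eqref{Cbound} follow from \eqref{bound} and the Cauchy bound exactly as you wrote. The only difference is that you also spell out the sup-norm piece of the $C^{0,\ga}$ norm, which the paper leaves implicit.
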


\begin{proof} 
	Indeed, $|\la(t) - \la(s)| \le |\int_s^t \la' \,d\ta| \le \|\la'\|_{L^p((\al,\be))} |t-s|^{1-1/p}$.
\end{proof}

The result in Theorem \ref{main} is best possible in the following sense:
\begin{itemize}
  \item In general the roots of a polynomial of degree $n$ cannot lie locally in $W^{1,n/(n-1)}$, 
  even when the coefficients are real analytic. 
  For instance, $Z^n = t$, $t \in \R$. 
  \item If the coefficients are just in $C^{n-1,\de}([\al,\be])$ for every $\de<1$, 
  then the roots need not have bounded variation in 
  $(\al,\be)$. See \cite[Example 4.4]{GhisiGobbino13}. 
\end{itemize}

A \emph{curve} of complex monic polynomials \eqref{curveofpolynomials} admits a continuous choice of its roots. This is 
no longer true if the dimension of the parameter space is at least two. In that case monodromy may 
prevent the existence of continuous roots. However, we obtain the following multiparameter result, where we 
impose the existence of a continuous root; see also Remark~\ref{remark4}. 

\begin{theorem}  \label{main2}
  Let $U \subseteq \R^m$ be open and let  
  \begin{equation} \label{polynomialmulti}
    P_a(x)(Z)= P_{a(x)}(Z) = Z^n + \sum_{j=1}^n a_j(x) Z^{n-j}, \quad x \in U,
  \end{equation}
  be a monic polynomial with coefficients $a_j \in C^{n-1,1}(U)$, $j = 1,\ldots,n$.   
  Let $\la \in C^0(V)$ be a root of $P_a$ on a relatively compact open subset $V \Subset U$. 
  Then $\la$ belongs to the Sobolev space $W^{1,p}(V)$ for every $1 \le p < n/(n-1)$. 
  The distributional gradient $\nabla \la$ satisfies  
  \begin{equation} \label{multbound} 
   \|\nabla \la \|_{L^p(V)}  \le  C(m,n,p,\cK ) \max_{1 \le j \le n} \|a_j\|^{1/j}_{C^{n-1,1}(\overline W)},
  \end{equation}
  where $\cK$ is any finite cover of $\overline V$ by open boxes $\prod_{i=1}^m (\al_i,\be_i)$ contained 
  in $U$ and $W = \bigcup \cK$; 
  the constant $C(m,n,p,\cK )$ depends only on $m$, $n$, $p$, and the cover $\cK$.   
\end{theorem}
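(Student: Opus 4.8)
The one-dimensional case (Theorem~\ref{main}) should reduce the multiparameter statement to a slicing argument, so I would first establish Theorem~\ref{main} and then derive Theorem~\ref{main2} from it. For the multiparameter theorem the strategy is to control $\la$ on each open box $B = \prod_{i=1}^m (\al_i,\be_i) \in \cK$ separately, apply the one-dimensional result to the restriction of $\la$ to almost every axis-parallel line segment in $B$, and then integrate the resulting $L^p$ bounds using Fubini. Since $\overline V$ is covered by finitely many such boxes, a partition-of-unity or covering argument assembles the local estimates into the global bound \eqref{multbound}, with the constant absorbing the combinatorics of $\cK$ and the finite overlaps.

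First I would fix a box $B = \prod_{i=1}^m (\al_i,\be_i) \subseteq U$ from the cover $\cK$ and a coordinate direction $e_i$. For each fixed $x' \in \prod_{k \ne i}(\al_k,\be_k)$, the curve $t \mapsto a_j(x', t\text{-slot})$ has coefficients in $C^{n-1,1}([\al_i,\be_i])$ with $C^{n-1,1}$-norms bounded by $\|a_j\|_{C^{n-1,1}(\overline B)}$, and $t \mapsto \la(x',\cdot)$ is a continuous root of the sliced polynomial. Theorem~\ref{main} then gives that this slice is absolutely continuous in $t$ with
\begin{equation*}
  \int_{\al_i}^{\be_i} |\p_i \la(x',t)|^p\,dt \le C(n,p)^p \max\{1,(\be_i-\al_i)\}\, \max_{1 \le j \le n} \|a_j\|_{C^{n-1,1}(\overline B)}^{p/j}.
\end{equation*}
Integrating over $x'$ and summing over $i = 1,\dots,m$ bounds $\sum_i \|\p_i \la\|_{L^p(B)}^p$, hence $\|\nabla \la\|_{L^p(B)}$, by a constant depending on $m$, $n$, $p$, and the side lengths of $B$ times $\max_j \|a_j\|_{C^{n-1,1}(\overline W)}^{1/j}$ (here using $\overline B \subseteq \overline W$). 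Summing over the finitely many boxes of $\cK$ gives \eqref{multbound}.

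The one missing point is that the slice-wise weak derivatives assemble into an honest distributional gradient of $\la$ on $B$: a locally bounded function that is absolutely continuous on almost every line parallel to each coordinate axis, with the partial derivatives jointly in $L^p$, lies in $W^{1,p}$ by the standard ACL (absolutely continuous on lines) characterization of Sobolev functions. Boundedness of $\la$ on $\overline V$ follows from the Cauchy bound $|\la| \le 2\max_j |a_j|^{1/j}$ already quoted in the text, and measurability of $x' \mapsto \int |\p_i \la(x',t)|^p dt$ follows from continuity of $\la$ together with Fatou-type arguments on difference quotients. The \textbf{main obstacle} is therefore not any of these routine measure-theoretic verifications but rather ensuring Theorem~\ref{main} is applied with a constant that is genuinely uniform in the slice parameter $x'$ — which it is, since the bound \eqref{bound} depends on the coefficients only through $\max_j \|a_j\|_{C^{n-1,1}}^{1/j}$ and on the interval only through its length, both of which are controlled uniformly over $B$. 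The dependence of the final constant on the cover $\cK$ is unavoidable and is exactly what the statement allows.
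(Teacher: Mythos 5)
Your overall route---slicing along coordinate axes, applying Theorem~\ref{main} to the one-dimensional restrictions, integrating via Fubini, and invoking the ACL characterization of $W^{1,p}$---is exactly the strategy the paper follows. But there is a genuine gap, and it sits precisely in the sentence where you assert that ``$t \mapsto \la(x',\cdot)$ is a continuous root of the sliced polynomial'' on all of $[\al_i,\be_i]$ and then apply Theorem~\ref{main} to get a bound on $(\al_i,\be_i)$. The root $\la$ is only defined on $V$, so $\la(x',\cdot)$ is only a continuous root on the one-dimensional section $V^{x'}\cap(\al_i,\be_i)$, which is an open subset of $\R$ that may have arbitrarily many (possibly infinitely many) connected components. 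You cannot apply Theorem~\ref{main} once to $(\al_i,\be_i)$, and if instead you apply it on each connected component $J$ separately, the estimate
\[
\|\p_i\la(x',\cdot)\|^p_{L^p(J)}\le C(n,p)^p\max\{1,|J|\}\,A^p
\]
carries a factor $\max\{1,|J|\}\ge 1$ per component, so summing over the components of the section gives a constant proportional to the \emph{number} of components---which is not controlled uniformly in $x'$. This is not a routine measure-theoretic nuisance; it is exactly the non-scale-invariance of \eqref{bound} that the paper flags as an open problem.

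The paper closes this gap with an extra idea that your proposal is missing: for each connected component $J$ of the section $\Om^y$ one extends the \emph{entire} system of roots continuously from $\overline J$ to the whole interval $I_1$ (using \cite[Lemma 4.3]{KLMR05} to reach the endpoints of $J$ and \cite[Lemma 6.17]{RainerN} to continue beyond), applies Theorem~\ref{main} once to the extended system on $I_1$, and then uses the relabeling-invariance \cite[Lemma~3.6]{ParusinskiRainerAC} of the quantity $\sum_{j=1}^n\|\la_j'\|^p_{L^p(J)}$ under change of continuous parameterization of the unordered root set. This lets one compare, on each component $J$, the given root $\la(\cdot,y)$ with one fixed extension to $I_1$, so that the local $L^p$-contributions sum to a single application of Theorem~\ref{main} on $I_1$ rather than to a count of components. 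Without this device (or an equivalent one), the slicing argument does not yield a finite bound.
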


\begin{remark}
	For any two distinct points $x$ and $y$ in $V$ such that the segment $[x,y]$ is contained in $V$, the root $\la$ satisfies a 
	H\"older condition 
	\[
		\frac{|\la(x) - \la(y)|}{|x-y|^\ga} 
		\le C(m,n,p,\on{diam}(V))  
   		\max_{1 \le j \le n} \|a_j\|^{1/j}_{C^{n-1,1}([x,y])}, 
	\]
	where $\ga = 1-1/p < 1/n$. This follows easily from Theorem \ref{main2} and Remark \ref{remark4}. 
\end{remark}

The proof of Theorem \ref{main} makes essential use of the recent result of Ghisi and Gobbino \cite{GhisiGobbino13} 
who found the optimal regularity of radicals of functions (we will need a version for complex valued functions; 
see Section \ref{radicals}). But we independently prove and generalize Ghisi and Gobbino's higher order 
Glaeser inequalities (see Section \ref{Glaeser}) 
on which their 
result is based.

\begin{theorem}[Ghisi and Gobbino \cite{GhisiGobbino13}] \label{GhisiGobbino}
  Let $k$ be a positive integer, let $\al \in (0,1]$, let $I \subseteq \R$ be an open bounded interval, and let $f : I \to \R$ 
  be a function.
  Assume that $f$ is continuous and that there exists $g \in C^{k,\al}(\oI,\R)$ such that 
  \[
    |f|^{k +\al} = |g|.
  \]
  Let $p$ be defined by $1/p + 1/(k+\al) =1$.
  Then we have $f' \in L^p_w(I)$ and 
  \begin{equation} \label{GG}
    \|f'\|_{p,w,I} \le 
    C(k) \max\Big\{\big(\Hoeld_{\al,I}(g^{(k)})\big)^{1/(k+\al)}|I|^{1/p}, 
    \|g'\|_{L^\infty(I)}^{1/(k+\al)}\Big\}, 
  \end{equation}
  where $C(k)$ is a constant that depends only on $k$.
\end{theorem}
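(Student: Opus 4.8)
The plan is to recast the estimate \eqref{GG} as a distribution-function bound and then to localize everything near the zero set of $g$, where the higher order Glaeser inequality of Section~\ref{Glaeser} does the work.

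Since $1/p+1/(k+\al)=1$ gives $p/(k+\al)=p-1$, the estimate \eqref{GG} is equivalent to
\[
\sup_{\lambda>0}\ \lambda^{p}\,\bigl|\{x\in I:|f'(x)|>\lambda\}\bigr|\ \le\ C(k)^{p}\,\max\{M^{p-1}|I|,\ L^{p-1}\},
\qquad M:=\Hoeld_{\al,I}(g^{(k)}),\quad L:=\|g'\|_{L^\infty(I)}.
\]
I would work on the open set $\Omega:=\{x\in I:g(x)\neq0\}$, a countable disjoint union of intervals (``components'') on each of which $g$ has constant sign and --- unless the component abuts $\partial I$ --- vanishes at both endpoints. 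On $\{g=0\}$ one has $f\equiv0$, and at a density point $x_0$ of $\{g=0\}$ the function $f$ is differentiable with $f'(x_0)=0$ (a nearby component has length $o(|x-x_0|)$, and on it $|g|$ is controlled by the vanishing of $g,g',\dots,g^{(k)}$ at its endpoints); so the a.e.\ derivative of $f$ vanishes a.e.\ on $\{g=0\}$. The displayed estimate gives $f'\in L^1_{loc}(I)$ since $p>1$, and together with $\operatorname{Var}_{[c,d]}(f)\le\sum_{\text{components }J\subseteq[c,d]}\int_{J}|f'|=\int_{[c,d]}|f'|$ it yields the local absolute continuity of $f$ and hence $f'\in L^p_w(I)$.

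On a component $J=(a,b)$, $f$ is $C^\infty$ with $|f'|=\tfrac1{k+\al}|g'|\,|g|^{1/(k+\al)-1}$, and the higher order Glaeser inequality applied on $\overline J$ (where $g$ keeps its sign) reads
\[
|g'(x)|^{k+\al}\ \le\ C(k)\bigl(M\,|g(x)|^{k+\al-1}+d_J(x)^{-(k+\al)}|g(x)|^{k+\al}\bigr),\qquad d_J(x):=\operatorname{dist}(x,\partial J),
\]
so that $|f'(x)|\le C'(k)\bigl(M^{1/(k+\al)}+|f(x)|/d_J(x)\bigr)$ (the at most two components touching $\partial I$ being treated with $|g'|\le L$ or the distance to the nearest zero instead). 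Hence, when $\lambda<2C'(k)M^{1/(k+\al)}$ the estimate is trivial: $\lambda^{p}|\{|f'|>\lambda\}|\le\lambda^{p}|I|\le(2C'(k))^{p}M^{p-1}|I|$. When $\lambda\ge 2C'(k)M^{1/(k+\al)}$ then $\{|f'|>\lambda\}\cap J\subseteq\{x\in J:|f(x)|>c\lambda\,d_J(x)\}$ with $c=1/(2C'(k))$; for a component with interior endpoints $|g(x)|\le L\,d_J(x)$, so $|f(x)|\le L^{1/(k+\al)}d_J(x)^{1/(k+\al)}$ and the inclusion forces $d_J(x)<\delta:=(c\lambda)^{-p}L^{p-1}$. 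Thus the bad set in $J$ lies within the $\delta$-neighbourhoods of the endpoints of $J$, and has measure $\le 2\|g\|_{L^\infty(J)}^{1/(k+\al)}/(c\lambda)$.

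The main difficulty is then to sum these per-component estimates over the (possibly infinitely many) components without losing too much: a train of many short components would contribute $\approx\lambda^{p}|I|$, which is far too large once $\lambda\gg M^{1/(k+\al)}$. The key point --- and the technical heart of the proof --- is that such a train forces $g^{(k)}$ to oscillate rapidly and so cannot coexist with a small H\"older seminorm: if a subinterval of length $\ell_0$ carries $N$ components on which $|g|\sim A$, then $g$ oscillates with frequency $\gtrsim N/\ell_0$, hence $M\gtrsim A(N/\ell_0)^{k+\al}$; plugging this into the contribution bound $\lambda^{p-1}NA^{1/(k+\al)}$ and using $(k+\al)(p-1)=p$, one checks the contribution is $\lesssim M^{p-1}\ell_0$ exactly in the range $\lambda\gtrsim M^{1/(k+\al)}$, the complementary range being the trivial case above. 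Making this dichotomy precise --- isolating the components with a transversal zero at an endpoint (those with a tangential zero of order $\ge k+\al$ contribute nothing once $\lambda\gg M^{1/(k+\al)}$, by the Taylor bound on $|g|$), quantifying how much $g^{(k)}$ must vary near a zero of $g$ with given slope, and assembling the resulting intervals into a clean packing estimate --- is where the real work lies; one may instead reproduce Ghisi and Gobbino's original combinatorial argument. Combining the two cases gives the displayed estimate, hence Theorem~\ref{GhisiGobbino}.
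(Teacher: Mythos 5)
The paper does not prove this theorem; it is attributed to Ghisi and Gobbino \cite{GhisiGobbino13}, and the related complex-valued Proposition~\ref{prop:radicals} is itself deferred to \cite{ParusinskiRainerAC}. So strictly speaking there is no in-paper proof to compare against. Your sketch has the right overall architecture: work on the components of $\{g\neq 0\}$, derive a pointwise bound of the form $|f'(x)| \le C\big(M^{1/(k+\al)} + |f(x)|/d_J(x)\big)$ from a higher-order Glaeser inequality, estimate the superlevel sets near the endpoints of each component, and sum using the observation that a cluster of many short components forces the H\"older seminorm of $g^{(k)}$ to be large.

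Nonetheless there are two genuine gaps. First, the Corollary in Section~\ref{Glaeser} (equivalently \cite[Proposition~3.4]{GhisiGobbino13}) requires that \emph{both} $g$ and $g'$ have constant sign on the centered interval; on a component $J$ of $\{g\neq 0\}$ only $g$ keeps its sign, while $g'$ typically changes sign at interior extrema of $|g|$. So your pointwise inequality with $I$ centered at $x$ of radius $d_J(x)$ does not follow directly from the Corollary; one must apply it one-sidedly toward the nearest zero of $g$ on a subinterval where $|g|$ is monotone, which changes both the geometry of the bad set and the role of $d_J(x)$. Second, and decisively, you explicitly leave the summation over components unproved, deferring to ``Ghisi and Gobbino's original combinatorial argument.'' That packing estimate --- quantifying precisely how much oscillation of $g^{(k)}$ a train of short components must carry, separating transversal from tangential endpoints, and making the dichotomy in $\lambda$ rigorous while respecting the mere $\sigma$-subadditivity of the $L^p_w$-quasinorm --- is the nontrivial content of the theorem. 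Acknowledging that it is ``where the real work lies'' without carrying it out means the proposal is a proof plan, not a proof.
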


Here $L^p_w(I)$ denotes the weak Lebesgue space equipped with the quasinorm $\|\cdot\|_{p,w,I}$ (see Section \ref{Lebesgue}), 
and $\Hoeld_{\al,I}(g^{(k)})$ is the $\al$-H\"older constant of $g^{(k)}$ on $I$. 

\subsection{Open problems}

We remark that our bound \eqref{bound} is not invariant under rescaling, 
in contrast to \eqref{GG}. The reasons for this defect is linked to our method of proof. 

\begin{openproblem}
  Are there scale invariant estimates which could replace \eqref{bound}?    
\end{openproblem}

We do not know whether, in the setting of Theorem \ref{main}, $\la'$ is actually an element of $L^{n/(n-1)}_w((\al,\be))$; 
as one could expect in view of Theorem \ref{GhisiGobbino}. 
This has technical reasons and comes from the fact that $\|\cdot\|_{p,w,I}^p$ is not $\si$-additive.

\begin{openproblem}
  Is $\la'$ in the setting of Theorem \ref{main} an element of $L^{n/(n-1)}_w((\al,\be))$?
  If so is there an explicit bound for $\|\la'\|_{n/(n-1),w,(\al,\be)}$ in terms of the coefficients $a_j$ and the 
  interval $(\al,\be)$?  
\end{openproblem}

The roots of \eqref{polynomialmulti} will in general not allow for continuous 
(and, a fortiori, $W^{1,1}_{\on{loc}}$) parameterizations if $m\ge 2$. It is thus natural to ask if the roots are 
representable locally  
by functions of bounded variation.

\begin{openproblem}
  Are the roots of a polynomial $P_a(x)$, $x \in \R^m$, $m\ge 2$, 
  with smooth complex valued coefficients representable by functions which locally have bounded variation?
  We can prove this for radicals of smooth functions.     
\end{openproblem}

\subsection{Strategy of the proof of Theorem \ref{main}}

Let us briefly describe the strategy of our proof of Theorem \ref{main}. 
It is by induction on the degree of the polynomial and its heart is Proposition~\ref{induction} below.

First we reduce the polynomial $P_a$ to Tschirnhausen form $P_{\tilde a}$ (indicated by adding \emph{tilde}), 
where $\tilde a_1 \equiv 0$ 
(see Section \ref{Tschirnhausen}). 
This has the benefit that near points $t_0$, where not all coefficients vanish,
the polynomial $P_{\tilde a}$ splits, 
\[
  P_{\tilde a}(t) = P_b(t) P_{b^*}(t), \quad t \in I, \quad (t_0 \in I),
\]
thanks to the inverse function theorem.
It is important for our proof that the splitting is \emph{universal} (and independent of $t_0$). We achieve this by 
considering the polynomial
\begin{gather*}
  Q_{\underline a}(Z) := \tilde a_k^{- n/k} P_{\tilde a} (\tilde a_k^{1/k} Z) 
  = Z^n + \sum_{j=2}^n \tilde a_k^{- j/k} \tilde a_j Z^{n-j}, \quad \tilde a_k \ne 0,  
\end{gather*}
which splits locally near every $(\underline a_2,\ldots,\underline a_n) \in \C^{n-1}$, since $\underline a_k =1$. 
We obtain a universal splitting by choosing a finite subcover of the compact set of points with $\underline a_k =1$ and 
$|\underline a_j| \le 1$ for $j \ne k$. 
It induces a splitting of $P_{\tilde a}$ and 
gives formulas for the coefficients $b_i$ (and $b^*_i$) in terms of $\tilde a_j$. See Sections \ref{ssec:split} and 
\ref{universal}.
The differentiability class of the $\tilde a_j$ is preserved by the splitting.

After the Tschirnhausen transformation $P_b \leadsto P_{\tilde b}$, we split $P_{\tilde b}$ near points $t_1 \in I$, 
where not all $\tilde b_i$ vanish, 
\[
  P_{\tilde b}(t) = P_c(t) P_{c^*}(t), \quad t \in J, \quad (t_1 \in J).
\]
Again we use the universal splitting (now for degree $n_b := \deg P_b$ polynomials in Tschirnhausen form). 
We get formulas for $c_h$ (and $c^*_h$) in terms of $\tilde b_j$, 
and the differentiability class is preserved.
Apply the Tschirnhausen transformation $P_c \leadsto P_{\tilde c}$.

The central idea underlying the induction is to show that, for $1 \le p < n/(n-1)$, we have an estimate of the form 
\begin{align} \label{central}
  \||J|^{-1} |\tilde b_{\ell}(t_1)|^{1/\ell} \|_{L^p (J)}
  + \sum_{h=2}^{n_{c}} \|(\tilde c_{h}^{1/h})'\|_{L^p (J)}
  &\le C \Big( \| |I|^{-1}  {|\tilde a_k(t_0)|^{1/k}} \|_{L^p (J)} 
  + \sum_{i=2}^{n_b} \|(\tilde b_i^{1/i})'\|_{L^p (J)}\Big), 
\end{align}
for a universal constant $C=C(n,p)$ (where $n_c := \deg P_c$). 
Here $k$ (resp.\ $\ell$) is chosen such that $|\tilde a_k(t_0)|^{1/k} = \max_{2 \le j \le n} |\tilde a_j(t_0)|^{1/j}$ 
(resp.\ $|\tilde b_\ell(t_1)|^{1/\ell} = \max_{2 \le i \le n_b} |\tilde b_i(t_1)|^{1/i}$), that is 
the $k$th (resp.\ $\ell$th) correctly weighted coefficient is \emph{dominant} at $t_0$ (resp.\ $t_1$).

In the derivation of \eqref{central} we make essential use of \eqref{GG} and Lemma \ref{taylor} below in 
order to bound the left-hand side 
by 
\[
  |J|^{-1+1/p} |\tilde b_{\ell}(t_1)|^{1/\ell}.
\]
Now the key to get \eqref{central} from this is that we can choose the interval $J$ such that 
\begin{equation} \label{introkey}
   D |J|^{-1+1/p} |\tilde b_\ell(t_1)|^{1/\ell} = 
  | J|^{1/p} \Big( |I|^{-1}  {|\tilde a_k(t_0)|^{1/k}}  
  + \sum_{i=2}^{n_b} \|(\tilde b _i^{1/i})'\|_{L^1 (J)} \Big) 
\end{equation}
where $D$ is a universal constant.

We get the estimate \eqref{central} on neighborhoods $J$ of all points $t_1 \in I$, where not all $\tilde b_i$ vanish. 
In order to glue these estimates 
we prove in Proposition \ref{cover} that there is a countable subcollection of intervals $J$ such that 
every point in their union is covered at most by two intervals. 
In this gluing process we use the $\si$-additivity of $\|\cdot\|^p_{L^p}$. 
Since the $L^p_w$-quasinorm lacks this property, we are forced to switch from $L^{n/(n-1)}_w$- to $L^p$-bounds for $p<n/(n-1)$.

In the end we must estimate the right-hand side of \eqref{central} by a bound involving the $C^{n-1,1}$-norm of the 
$\tilde a_j$. At this stage we will not always have an identity corresponding to \eqref{introkey} 
(see Remark \ref{rem:blowup}). 
We resolve this inconvenience by extending the coefficients $\tilde a_j$ 
to a larger interval and we force them to vanish at the boundary 
of this interval. This results in an identity of the type \eqref{introkey} for the $\tilde a_j$ instead of the $\tilde b_i$
(see Lemma \ref{lem:whitney}). 
However, in this process we lose scale invariance of our bound \eqref{bound}.

\subsection{Structure of the paper}

The paper is structured as follows. We fix notation and recall facts on function spaces in Section \ref{functionspaces}. 
Ghisi and Gobbino's result on radicals (Theorem \ref{GhisiGobbino}) is extended to complex valued functions 
in Section \ref{radicals}.
We collect preliminaries on polynomials and define a universal splitting of such in Section \ref{polynomials}.
We derive bounds for the coefficients of a polynomial and generalize Ghisi and Gobbino's higher order Glaeser inequalities 
\cite[Proposition~3.4]{GhisiGobbino13}
in Section~\ref{Glaeser}, by applying these bounds to the Taylor polynomial.  
In Sections \ref{aestimates} and \ref{bestimates} we deduce estimates for the iterated derivatives of the coefficients before 
and after the splitting. 
Section \ref{specialcover} is dedicated to the proof of Proposition \ref{cover}. 
The proof of Theorem \ref{main} is finally carried out 
in Section \ref{proof}; in Appendix \ref{appendix} we illustrate the proof for polynomials of degree 3 and 4.  
We deduce Theorem \ref{main2} in Section \ref{proofThm2}. 
In Section \ref{applications} we provide three applications of our results: 
local solvability of a system of pseudo-differential equations, a lifting theorem for mappings into orbit spaces of 
finite group representations, and a sufficient condition for multi-valued functions to be of Sobolev class $W^{1,p}$ in the 
sense of Almgren \cite{Almgren00}.

\subsection*{Acknowledgement}

We thank the anonymous referees for the helpful remarks to improve the 
presentation.

\section{Function spaces} \label{functionspaces}

In this section we fix notation for function spaces and recall well-known facts. 

\subsection{H\"older spaces}

Let $\Om \subseteq \R^n$ be open and bounded. We denote by $C^0(\Om)$ the space of continuous complex valued functions on $\Om$.
For $k \in \N \cup \{\infty\}$ we set 
\begin{align*}
  C^k(\Om) &= \{f \in \C^\Om : \p^\al f \in C^0(\Om), 0 \le |\al| \le k\},\\
  C^k(\overline \Om) &= \{f \in C^k(\Om) : \p^\al f \text{ has a continuous extension to } \overline \Om, 
  0 \le |\al| \le k\}.
\end{align*}
For $\al \in (0,1]$ a function $f : \Om \to \C$ belongs to $C^{0,\al}(\overline \Om)$ if it is $\al$-H\"older continuous 
in $\Om$, i.e.,
\[
\Hoeld_{\al,\Om}(f) := \sup_{x,y \in \Om, x \ne y} \frac{|f(x)-f(y)|}{|x-y|^\al} < \infty.
\]
If $f$ is Lipschitz, i.e., $f \in C^{0,1}(\ol \Om)$, we use 
\[
\Lip_\Om (f) =\Hoeld_{1,\Om}(f).
\]
We define 
\[
 C^{k,\al}(\overline \Om) = \{f \in C^k(\overline \Om) : \p^\be f \in C^{0,\al}(\overline \Om), |\be|=k\}.
\]
Note that $C^{k,\al}(\overline \Om)$ is a Banach space when provided with the norm
\[
\|f\|_{C^{k,\al}(\overline \Om)} 
:= \sup_{\substack{|\be| \le k\\ x \in \Om}} |\p^\be f(x)| + \sup_{|\be|=k} \Hoeld_{\al,\Om}(\p^\be f).
\]

\subsection{Lebesgue spaces and weak Lebesgue spaces}  \label{Lebesgue}

Let $\Om \subseteq \R^n$ be open, and let $1 \le p \le \infty$.
We denote by $L^p(\Om)$ the Lebesgue space with respect to the $n$-dimensional Lebesgue measure $\cL^n$. 
For Lebesgue measurable sets $E \subseteq \R^n$ we denote by 
\[
  |E| = \cL^n(E)
\]
its $n$-dimensional Lebesgue measure. Let $p'$ denote the conjugate exponent of $p$ defined by 
\[
  \frac 1p + \frac 1 {p'} =1
\]
with the convention $1' = \infty$ and $\infty' =1$.

Let $1 \le p < \infty$ and let us assume that $\Om$ is bounded. 
A measurable function $f : \Om \to \C$ belongs to the weak $L^p$-space $L_w^p(\Om)$ if 
\[
\|f\|_{p,w,\Om} := \sup_{r\ge 0}  r\, |\{x \in \Om : |f(x)| > r\}|^{1/p} < \infty.
\]  
For $1 \le q < p < \infty$ we have (cf.\ \cite[Ex.\ 1.1.11]{Grafakos08})
\begin{equation} \label{eq:qp}
  \|f\|_{q,w,\Om} \le \|f\|_{L^q(\Om)} \le \Big(\frac{p}{p-q}\Big)^{1/q} 
  |\Om|^{1/q-1/p} \|f\|_{p,w,\Om}
\end{equation}
and hence
$L^p(\Om) \subseteq L_w^p(\Om) \subseteq L^q(\Om) \subseteq L_w^q(\Om)$
with strict inclusions. 
It will be convenient to \emph{normalize} the $L^p$-norm and the $L^p_w$-quasinorm, i.e., we will consider 
\begin{align*}
  \|f\|^*_{L^p(\Om)} &:= |\Om|^{-1/p} \|f\|_{L^p(\Om)},\\
  \|f\|^*_{p,w,\Om} &:= |\Om|^{-1/p} \|f\|_{p,w,\Om}.
\end{align*}
Note that $\|1\|^*_{L^p(\Om)} = \|1\|^*_{p,w,\Om} =1$.
Then, for $1 \le q < p < \infty$,
\begin{gather}
  \|f\|^*_{L^q(\Om)} \le \|f\|^*_{L^p(\Om)}, \label{inclusions0}\\
  \|f\|^*_{q,w,\Om} \le \|f\|^*_{L^q(\Om)} \le \Big(\frac{p}{p-q}\Big)^{1/q} 
   \|f\|^*_{p,w,\Om}. \label{inclusions}
\end{gather}
We remark that $\|\cdot\|_{p,w,\Om}$ is only a quasinorm: the triangle inequality fails, but for  
$f_j \in L_w^p(\Om)$ 
we still have
\[
\Big\|\sum_{j=1}^m f_j \Big\|_{p,w,\Om} \le m \sum_{j=1}^m \|f_j\|_{p,w,\Om}.
\]
There exists a norm equivalent to $\|\cdot\|_{p,w,\Om}$ which makes $L_w^p(\Om)$ into a Banach space if $p>1$. 

The $L^p_w$-quasinorm is $\si$-subadditive: if $\{\Om_j\}$ is a countable family of open sets with 
$\Om = \bigcup \Om_j$ then 
\begin{equation}
  \|f\|^p_{p,w,\Om}  \le \sum_j \|f\|^p_{p,w,\Om_j} \quad \text{ for every } f \in L^p_w(\Om).
\end{equation}
But it is not $\si$-additive:
for instance, for $h : (0,\infty) \to \R$, $h(t):= t^{-1/p}$, we have  
$\|h\|_{p,w,(0,\ep)}^p = 1$ for every $\ep>0$,  
but $\|h\|_{p,w,(1,2)}^p = 1/2$.

\subsection{Sobolev spaces}

For $k \in \N$ and $1 \le p \le \infty$ we consider the Sobolev space
\[
  W^{k,p}(\Om) = \{f \in L^p(\Om) : \p^\al f \in L^p(\Om), 0 \le |\al| \le k\},
\]
where $\p^\al f$ denote distributional derivatives, with the norm
\[
	\|f\|_{W^{k,p}(\Om)} := \sum_{|\al| \le k} \|\p^\al f\|_{L^p(\Om)}.
\] 
On bounded intervals $I \subseteq \R$ the Sobolev space $W^{1,1}(I)$ 
coincides with the space $AC(I)$ of absolutely continuous functions on $I$
if we identify each $W^{1,1}$-functions with its unique continuous representative. 
Recall that a function $f : \Om \to \R$ on an open subset $\Om \subseteq \R$ 
is absolutely continuous if for every $\ep>0$ there exists $\de>0$ so that  
$\sum_{i=1}^n |a_i -b_i| < \de$ implies $\sum_{i=1}^n |f(a_i) -f(b_i)| < \ep$ whenever $[a_i,b_i]$, $i =1,\ldots,n$, 
are non-overlapping intervals contained in $\Om$.

We shall also use $W^{k,p}_{\on{loc}}$, $AC_{\on{loc}}$, etc.\ with the obvious meaning.

\subsection{Extension lemma}

We will use the following extension lemma. The analogue for the $L^p_w$-quasinorm 
may be found in \cite[Lemma 2.1]{ParusinskiRainerAC}
which is a slight generalization of \cite[Lemma 3.2]{GhisiGobbino13}. 
Here we need a version for the $L^p$-norm; 
the proof is the same.  

\begin{lemma} \label{lem:extend}
  Let $\Om \subseteq \R$ be open and bounded, let $f : \Om \to \C$ be continuous, 
  and set $\Om_0 := \{t \in \Om : f(t) \ne 0\}$.  
  Assume that $f|_{\Om_0} \in AC_{\on{loc}}(\Om_0)$ and that $f|_{\Om_0}' \in L^p(\Om_0)$ for some $p\ge 1$ 
  (note that $f$ is differentiable a.e.\ in $\Om_0$).
  Then the distributional derivative of $f$ in $\Om$ is a measurable function $f' \in L^p(\Om)$ and 
  \begin{equation} \label{eq:extend}
  \|f'\|_{L^p(\Om)} = \|f|_{\Om_0}'\|_{L^p(\Om_0)}.
  \end{equation}
\end{lemma}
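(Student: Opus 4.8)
The statement asserts that for a continuous $f\colon\Om\to\C$ with $\Om_0=\{f\neq 0\}$, if $f|_{\Om_0}$ is locally absolutely continuous with derivative in $L^p(\Om_0)$, then $f$ itself has a distributional derivative in $L^p(\Om)$ and the $L^p$-norms agree. The plan is to follow verbatim the argument of \cite[Lemma 2.1]{ParusinskiRainerAC} (or \cite[Lemma 3.2]{GhisiGobbino13}), replacing the weak $L^p$-quasinorm by the genuine $L^p$-norm; since $\|\cdot\|_{L^p}$ is a norm and is $\si$-additive on disjoint sets, the bookkeeping is if anything simpler. First I would write $\Om_0$ as a countable disjoint union of open intervals $\Om_0=\bigsqcup_i (a_i,b_i)$ (its connected components) and set $g\colon\Om\to\C$ equal to $f|_{\Om_0}'$ on $\Om_0$ and $0$ on $\Om\setminus\Om_0$; by hypothesis $g$ is measurable and $\|g\|_{L^p(\Om)}=\|g\|_{L^p(\Om_0)}=\|f|_{\Om_0}'\|_{L^p(\Om_0)}<\infty$, so it suffices to show that $g$ is the distributional derivative of $f$ on $\Om$, i.e.\ $\int_\Om f\,\vh' = -\int_\Om g\,\vh$ for all $\vh\in C_c^\infty(\Om)$.

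Next I would verify the defining identity. Fix $\vh\in C_c^\infty(\Om)$. On each component $(a_i,b_i)$ of $\Om_0$, the function $f$ is locally absolutely continuous with $L^p\subseteq L^1_{\on{loc}}$ derivative, so integration by parts is valid on compact subintervals; the crucial point is that $f$ extends continuously to the endpoints $a_i,b_i$ with value $0$ there (continuity of $f$ on $\Om$ together with $a_i,b_i\notin\Om_0$), which kills the boundary terms and gives $\int_{a_i}^{b_i} f\,\vh' = -\int_{a_i}^{b_i} g\,\vh$ for each $i$. The remaining part $\Om\setminus\Om_0$ is closed in $\Om$ and $f\equiv 0$ there, so it contributes nothing to $\int_\Om f\,\vh'$; more precisely $\int_\Om f\,\vh' = \sum_i \int_{a_i}^{b_i} f\,\vh'$, the interchange of sum and integral being justified by $\sum_i\int_{a_i}^{b_i}|f\,\vh'|\le \|\vh'\|_\infty\int_\Om|f|<\infty$ on the bounded set $\Om$. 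Summing the per-component identities and using $g=0$ off $\Om_0$ yields $\int_\Om f\,\vh' = -\sum_i\int_{a_i}^{b_i} g\,\vh = -\int_\Om g\,\vh$, as desired. Hence $f\in W^{1,p}_{\on{loc}}(\Om)$ with $f'=g$, and taking $L^p$-norms over $\Om$ gives \eqref{eq:extend}.

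\textbf{Main obstacle.} The one subtle point, and the place where continuity of $f$ on all of $\Om$ is genuinely used, is the vanishing of the boundary terms in the integration by parts on each component interval $(a_i,b_i)$: one needs that $f(t)\to 0$ as $t$ approaches an endpoint that lies in $\Om$, which holds because that endpoint is not in $\Om_0$. (Endpoints of $\Om_0$-components that happen to be endpoints of $\Om$ itself cause no trouble, as $\vh$ has compact support in $\Om$.) Everything else is routine: dominated convergence to split and recombine the integral over $\Om_0$, and the identification of $f'$ with the measurable function $g$. I expect no difficulty exporting the proof of the $L^p_w$ version, since the argument there never used anything beyond $\si$-subadditivity of the (quasi)norm, which $L^p$ also enjoys together with genuine additivity on disjoint pieces.
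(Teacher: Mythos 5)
Your proof is essentially correct and follows the same standard route the paper outsources to \cite[Lemma~2.1]{ParusinskiRainerAC}: decompose $\Om_0$ into its components $(a_i,b_i)$, integrate by parts on each, use continuity of $f$ together with $f=0$ on $\Om\setminus\Om_0$ to kill boundary terms at endpoints lying in $\Om$, and use the compact support of $\vh$ to handle endpoints on $\partial\Om$. You also correctly identified the one nontrivial point, namely that the vanishing of the boundary terms is exactly where continuity of $f$ on all of $\Om$ enters.

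One small slip in the bookkeeping: you justify the interchange of sum and integral by the estimate
$\sum_i\int_{a_i}^{b_i}|f\,\vh'|\le \|\vh'\|_\infty\int_\Om|f|<\infty$, but continuity of $f$ on a bounded open set does not imply $f\in L^1(\Om)$. For instance, with $\Om=\bigcup_{n\ge1}\bigl(\tfrac1{n+1},\tfrac1n\bigr)$ and $f\equiv n$ on the $n$th component one has $f$ continuous on $\Om$, $\Om_0=\Om$, $f|_{\Om_0}'\equiv0\in L^p$, yet $\int_\Om|f|=\sum_n\tfrac1{n+1}=\infty$. The correct justification is to restrict to $K:=\operatorname{supp}\vh$, a compact subset of $\Om$ on which $f$ is bounded, giving $\sum_i\int_{a_i}^{b_i}|f\,\vh'|=\int_{\Om_0}|f\,\vh'|\le\|f\|_{L^\infty(K)}\|\vh'\|_{L^1}<\infty$. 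With that replacement your argument is complete.
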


\begin{proof}
One shows that 
\[
\ps(t) := 
\begin{cases}
  f'(t) & \text{ if } t \in \Om_0,\\
  0     & \text{ if } t \in \Om \setminus \Om_0,  
\end{cases}
\]
represents the distributional derivative of $f$ in $\Om$; for details see \cite[Lemma 2.1]{ParusinskiRainerAC}.
\end{proof}


\section{Radicals of differentiable functions} \label{radicals}

We derive an analogue of Theorem \ref{GhisiGobbino} for complex valued functions.

\begin{proposition} \label{prop:radicals}
  Let $I \subseteq \R$ be a bounded interval, let $k \in \N_{>0}$, and $\al \in (0,1]$.
  For each $g \in C^{k,\al}(\overline I)$ we have 
  \begin{equation} \label{est1}
    |g'(t)| \le \La_{k+\al}(t) |g(t)|^{1-1/(k+\al)}, \quad \text{ a.e.\ in } I,
  \end{equation}  
  for some $\La_{k+\al}=\La_{k+\al,g} \in L_w^p(I,\R_{\ge0})$, 
  where $p =(k+\al)'$, and such that
  \begin{equation} \label{est2}
    \|\La_{k+\al}\|_{p,w,I} \le C(k) \max\Big\{\big(\Hoeld_{\al,I}(g^{(k)})\big)^{1/(k+\al)}|I|^{1/p}, 
    \|g'\|_{L^\infty(I)}^{1/(k+\al)}\Big\}.
  \end{equation}
\end{proposition}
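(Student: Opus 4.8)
The plan is to derive the complex-valued estimate from the real-valued Theorem~\ref{GhisiGobbino} by a real/imaginary (or modulus) decomposition, being careful to keep the constants and weak-$L^p$ bounds under control. First I would reduce to the case where $g$ has no zeros off a small set: set $\Om_0 := \{t \in I : g(t) \ne 0\}$, an open subset of $I$, and note that on each connected component $J$ of $\Om_0$ the function $g$ is nonvanishing, so $|g|^{1/(k+\al)}$ makes sense and we want to compare $|g'|$ with $|g|^{1-1/(k+\al)}$. The trouble with applying Theorem~\ref{GhisiGobbino} directly is that it is stated for a real function $f$ with $|f|^{k+\al} = |g|$; so the natural move is to observe that $f := |g|^{1/(k+\al)} \operatorname{sgn}(\Re g)$ (or some similar real surrogate) is continuous, but its $k$-th derivative control is not obvious since $g$ is complex.

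A cleaner route, which I expect is the intended one, is to write $g = g_1 + i g_2$ with $g_1 = \Re g$, $g_2 = \Im g \in C^{k,\al}(\overline I, \R)$, and apply Theorem~\ref{GhisiGobbino} \emph{not} to $g_1, g_2$ themselves but to the observation that $|g|^2 = g_1^2 + g_2^2$ is in $C^{k,\al}$ and that one can bound $|g'| \le |g_1'| + |g_2'|$. For each real component $g_\ell$, Theorem~\ref{GhisiGobbino} applied to $f_\ell$ with $|f_\ell|^{k+\al} = |g_\ell|$ gives $|g_\ell'| = (k+\al)|f_\ell|^{k+\al-1}|f_\ell'| \le (k+\al)|f_\ell'| \cdot |g_\ell|^{1-1/(k+\al)} \le (k+\al)|f_\ell'|\,|g|^{1-1/(k+\al)}$, since $|g_\ell| \le |g|$. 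Setting $\La_{k+\al} := (k+\al)(|f_1'| + |f_2'|)$, summing, and using $|g'| \le |g_1'| + |g_2'|$ gives \eqref{est1}. For \eqref{est2}, use that $\|f_\ell'\|_{p,w,I} \le C(k)\max\{(\Hoeld_{\al,I}(g_\ell^{(k)}))^{1/(k+\al)}|I|^{1/p}, \|g_\ell'\|_{L^\infty(I)}^{1/(k+\al)}\}$, the $m$-term quasi-triangle inequality for $\|\cdot\|_{p,w,I}$ (here $m=2$), and $\Hoeld_{\al,I}(g_\ell^{(k)}) \le \Hoeld_{\al,I}(g^{(k)})$, $\|g_\ell'\|_{L^\infty} \le \|g'\|_{L^\infty}$, absorbing the factor $(k+\al)\cdot 2 \cdot 2$ into $C(k)$.

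Finally I need to be careful about the null set where $g$ vanishes: on $I \setminus \Om_0$ the function $g$ is zero, hence (since $g \in C^{k,\al} \subseteq C^1$) the classical derivative $g'$ vanishes a.e.\ on $\{g = 0\}$ by the standard fact that a $C^1$ function has vanishing derivative almost everywhere on its zero set; there \eqref{est1} holds trivially with both sides zero (interpreting $|g(t)|^{1-1/(k+\al)} = 0$). So the a.e.\ statement \eqref{est1} holds on all of $I$, and $\La_{k+\al}$, defined as above on $\Om_0$ and extended by $0$, is measurable on $I$ with $\|\La_{k+\al}\|_{p,w,I} = \|\La_{k+\al}\|_{p,w,\Om_0}$ controlled as required. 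The main obstacle, and the point that needs the most care, is verifying that Theorem~\ref{GhisiGobbino} is genuinely applicable to each real component — i.e.\ that $f_\ell$ as defined is continuous on all of $I$ (it is, since $|g_\ell|^{1/(k+\al)}$ is continuous and one can choose the sign to make $f_\ell$ continuous, or simply work componentwise on each sign-component and patch) — together with bookkeeping the constants through the quasi-triangle inequality so that the final constant depends only on $k$.
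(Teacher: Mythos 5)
Your proof is correct. The paper itself only says the proof is ``analogous to the proof of [Proposition~3.1, ParusinskiRainerAC],'' which (in that reference) is a direct adaptation of the Ghisi--Gobbino argument to complex-valued $g$; you instead reduce the complex statement to two applications of the real-valued Theorem~\ref{GhisiGobbino} via the decomposition $g = g_1 + i g_2$. Both routes are legitimate. Your reduction is cleaner in the sense that it treats Theorem~\ref{GhisiGobbino} as a black box rather than re-running its internal mechanism (the Glaeser-type pointwise estimate plus the covering/measure argument that controls the level sets of $\La$); the cost is only a benign worsening of constants by factors like $(k+\al)\cdot 2 \cdot 2$, which stay $O(k)$ since $\al \le 1$.

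The few points you flagged as needing care do in fact all go through:
\begin{itemize}
\item Taking $f_\ell := |g_\ell|^{1/(k+\al)} \ge 0$ makes $f_\ell$ continuous with no sign patching needed, and the hypothesis $|f_\ell|^{k+\al}=|g_\ell|$ of Theorem~\ref{GhisiGobbino} is satisfied verbatim.
\item On $\{g_\ell\ne 0\}$ one has a.e.\ $|g_\ell'| = (k+\al)|f_\ell|^{k+\al-1}|f_\ell'| = (k+\al)|g_\ell|^{1-1/(k+\al)}|f_\ell'|$ (this is in fact an equality, not just $\le$), and since $|g_\ell|\le |g|$ and $1-1/(k+\al)>0$, it is $\le (k+\al)|g|^{1-1/(k+\al)}|f_\ell'|$.
\item On $\{g_\ell = 0\}$ one has $g_\ell'=0$ a.e.\ (a $C^1$ function has vanishing derivative a.e.\ on its zero set), while the right-hand side is $\ge 0$, so the inequality holds a.e.\ there too. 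Summing over $\ell=1,2$ and using $|g'|\le|g_1'|+|g_2'|$ gives \eqref{est1} with $\La_{k+\al}:=(k+\al)(|f_1'|+|f_2'|)$.
\item The estimate \eqref{est2} follows from the two-term quasi-triangle inequality for $\|\cdot\|_{p,w,I}$ together with $\Hoeld_{\al,I}(g_\ell^{(k)})\le\Hoeld_{\al,I}(g^{(k)})$ and $\|g_\ell'\|_{L^\infty}\le\|g'\|_{L^\infty}$ (projection onto a real coordinate is $1$-Lipschitz).
\end{itemize}
So the argument is complete and correct; it is a genuinely different (reduction-based) route from the paper's cited proof, but delivers the same statement.
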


\begin{proof}
  Analogous to the proof of \cite[Proposition~3.1]{ParusinskiRainerAC}.
\end{proof}

\begin{corollary} \label{cor:radicals}
  Let $n$ be a positive integer and 
  let $I \subseteq \R$ be an open bounded interval. Assume that $f : I \to \C$ is a continuous function such that 
  $f^n = g \in C^{n-1,1}(\oI)$.
  Then we have $f' \in L^{n'}_w(I)$ and 
  \begin{equation} \label{est}
    \|f'\|_{n',w,I} \le 
    C(n) \max\Big\{\big(\Lip_{I}(g^{(n-1)})\big)^{1/n}|I|^{1/n'}, 
    \|g'\|_{L^\infty(I)}^{1/n}\Big\}. 
  \end{equation}  
\end{corollary}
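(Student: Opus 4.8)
\textbf{Proof plan for Corollary \ref{cor:radicals}.}
The plan is to derive this as a special case of Proposition \ref{prop:radicals} together with the extension Lemma \ref{lem:extend}. First I would apply Proposition \ref{prop:radicals} with $k = n-1$ and $\al = 1$, so that $k + \al = n$ and $p = n'$. The hypothesis $g = f^n \in C^{n-1,1}(\oI)$ is exactly the hypothesis of the Proposition, and it yields a function $\La_n \in L^{n'}_w(I, \R_{\ge 0})$ with
\[
  |g'(t)| \le \La_n(t)\, |g(t)|^{1 - 1/n}, \quad \text{a.e.\ in } I,
\]
and with the bound \eqref{est2} on $\|\La_n\|_{n',w,I}$, which is precisely the right-hand side of \eqref{est}. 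So the entire task reduces to converting the differential inequality for $g = f^n$ into one for $f$ itself.

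Next I would work on the open set $I_0 := \{t \in I : f(t) \ne 0\} = \{t : g(t) \ne 0\}$. On $I_0$ the function $z \mapsto z^{1/n}$ can be applied locally (choosing a continuous branch, which is possible since $f$ is a fixed continuous root of $g$), so $f = g^{1/n}$ is locally $C^1$ there — indeed locally as smooth as $g$ away from its zeros — and the chain rule gives $g' = n f^{n-1} f'$, hence
\[
  f'(t) = \frac{g'(t)}{n\, f(t)^{n-1}}, \quad t \in I_0.
\]
Combining with the inequality from the Proposition and using $|f(t)|^{n-1} = |g(t)|^{1-1/n}$ yields, for a.e.\ $t \in I_0$,
\[
  |f'(t)| = \frac{|g'(t)|}{n\,|f(t)|^{n-1}} \le \frac{\La_n(t)\,|g(t)|^{1-1/n}}{n\,|g(t)|^{1-1/n}} = \frac{\La_n(t)}{n}.
\]
Thus $f|_{I_0}' \in L^{n'}_w(I_0)$ with $\|f|_{I_0}'\|_{n',w,I_0} \le \frac1n \|\La_n\|_{n',w,I} \le \|\La_n\|_{n',w,I}$, and in particular $f|_{I_0}$ is locally absolutely continuous on $I_0$ (weak $L^{n'}$ with $n' > 1$ embeds in $L^1_{\on{loc}}$, so $f|_{I_0} \in W^{1,1}_{\on{loc}}(I_0)$).

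Finally I would invoke the extension lemma to pass from $I_0$ back to all of $I$. Here I need the $L^p_w$-version of Lemma \ref{lem:extend}, which is cited in the excerpt as \cite[Lemma 2.1]{ParusinskiRainerAC}: since $f$ is continuous on $I$, vanishes on $I \setminus I_0$, and $f|_{I_0} \in AC_{\on{loc}}(I_0)$ with $f|_{I_0}' \in L^{n'}_w(I_0)$, it follows that the distributional derivative $f'$ on $I$ is the measurable function equal to $f|_{I_0}'$ on $I_0$ and $0$ elsewhere, with $\|f'\|_{n',w,I} = \|f|_{I_0}'\|_{n',w,I_0}$. Chaining the estimates gives $\|f'\|_{n',w,I} \le \|\La_n\|_{n',w,I}$, and substituting \eqref{est2} (with $k = n-1$, $\al = 1$, noting $\Hoeld_{1,I} = \Lip_I$) produces exactly \eqref{est} after absorbing the factor $1/n$ into the constant $C(n)$. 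The main subtlety — though it is routine here — is the verification that $f$ is genuinely differentiable a.e.\ on $I_0$ with the chain-rule identity, which relies on $g$ being $C^{n-1,1}$ (hence at least $C^1$) and nonvanishing there; the one genuinely borrowed ingredient is the weak-$L^p$ extension lemma, which is stated verbatim in \cite{ParusinskiRainerAC} and whose proof is identical to that of Lemma \ref{lem:extend}.
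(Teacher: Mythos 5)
Your proposal is correct and matches the paper's proof essentially verbatim: on $\{f \ne 0\}$ the chain rule gives $|f'| = \tfrac{1}{n}|g'|/|g|^{1-1/n}$, Proposition \ref{prop:radicals} with $k = n-1$, $\alpha = 1$ bounds this by $\La_n/n$, and the weak-$L^p$ version of Lemma \ref{lem:extend} (cited from \cite{ParusinskiRainerAC}) extends the estimate across the zero set of $f$. The paper states this much more tersely but the argument and the ingredients invoked are the same.
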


\begin{proof}
  On the set $\Om_0 = \{t \in I : f(t) \ne 0\}$, $f$ is differentiable and satisfies
  \[
    |f'(t)|   
    = \frac 1 n \frac{|g'(t)|}{|g(t)|^{1-1/n}}.
  \]  
  So the assertion follows from Proposition \ref{prop:radicals} and the $L^p_w$-analogue of Lemma \ref{lem:extend}; see 
  \cite[Lemma~2.1]{ParusinskiRainerAC}.
\end{proof}

\begin{remark}
  Proposition~\ref{prop:radicals} and hence also Corollary \ref{cor:radicals} are optimal in the following sense:
  \begin{itemize}
    \item $\La_{k+\al}$ can in general not be chosen in $L^p$. Indeed, for $g : (-1,1) \to \R$, $g(t)=t$, we have 
    $|g'| |g|^{1/(k+\al) - 1} = |t|^{-1/p}$ 
    which is not $p$-integrable near $0$; see \cite[Example 4.3]{GhisiGobbino13}.
    \item If $g$ is only in $C^{k,\be}(\overline I)$ for every $\be<\al$, 
    then \eqref{est1} in general fails even for $\La_{k+\al} \in L^1(I)$.
    We refer to \cite[Example 4.4]{GhisiGobbino13} for a non-negative function 
    $g\in \bigcap_{\be<\al} C^{k,\be}(\overline I) \cap C^{\infty}(I)$ and $g \not\in C^{k,\al}(\overline I)$ 
    whose non-negative $(k+\al)$-root has unbounded variation in $I$.    
  \end{itemize}
\end{remark}

\section{Preliminaries on polynomials} \label{polynomials}

\subsection{Tschirnhausen transformation}

A monic polynomial 
\[
  P_a(Z) = Z^n + \sum_{j=1}^n a_j Z^{n-j}, \quad a=(a_1,\ldots,a_n) \in \C^n,
\] 
is said to be in \emph{Tschirnhausen form} if $a_1=0$.
Every polynomial $P_a$ can be transformed to a polynomial $P_{\tilde a}$ in Tschirnhausen form 
by the substitution $Z \mapsto Z-a_1/n$, which we refer to as the 
\emph{Tschirnhausen transformation}, 
\[
  P_{\tilde a}(Z) = P_a(Z-a_1/n) = Z^n + \sum_{j=2}^n \tilde a_j Z^{n-j}, 
  \quad \tilde a=(\tilde a_2,\ldots,\tilde a_n) \in \C^{n-1}.
\]
We have the formulas
\begin{equation}\label{Tschirnhausen}
  \tilde a_j = \sum_{\ell=0}^j C_\ell\, a_\ell\, {a_1}^{j-\ell}, \quad j= 2,\ldots,n, 
\end{equation} 
where $C_\ell$ are universal constants.
The effect of the Tschirnhausen transformation will always be indicated by adding \emph{tilde} to the coefficients, 
$P_a \leadsto P_{\tilde a}$.  

We will identify the set of monic complex polynomials $P_a$ of degree $n$ with the set $\C^n$ (via $P_a \mapsto a$) and 
the set of monic complex polynomials $P_{\tilde a}$ of degree $n$ in Tschirnhausen form 
with the set $\C^{n-1}$ (via $P_{\tilde a} \mapsto \tilde a$).

\subsection{Splitting} \label{ssec:split}

The following well-known lemma (see e.g.\ \cite{AKLM98} or \cite{BM90}) is a consequence of the 
inverse function theorem. 

\begin{lemma} \label{split}
Let $P_a = P_b P_c$, where $P_b$ and $P_c$ are monic complex polynomials without common root.
Then for $P$ near $P_a$ we have $P = P_{b(P)} P_{c(P)}$
for analytic mappings of monic polynomials $P \mapsto b(P)$ and $P \mapsto c(P)$,
defined for $P$ near $P_a$, with the given initial values.
\end{lemma}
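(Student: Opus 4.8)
The plan is to prove Lemma~\ref{split} by a direct application of the inverse function theorem to the ``multiplication map'' that sends a pair of monic polynomials to their product. Concretely, fix $n_b = \deg P_b$ and $n_c = \deg P_c$ with $n_b + n_c = n$, and identify monic polynomials of degree $n_b$ (resp.\ $n_c$, $n$) with their coefficient vectors in $\C^{n_b}$ (resp.\ $\C^{n_c}$, $\C^n$). Define
\[
  \mu \colon \C^{n_b} \times \C^{n_c} \to \C^n, \qquad \mu(b,c) = (\text{coefficients of } P_b \cdot P_c).
\]
This map is polynomial, hence analytic, and $\mu(b_0,c_0) = a$ where $b_0,c_0$ are the coefficient vectors of the given $P_b,P_c$. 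The goal is to show that the derivative $d\mu_{(b_0,c_0)}$ is a linear isomorphism of $\C^n$; then the inverse function theorem provides an analytic local inverse $P \mapsto (b(P),c(P))$ near $a$ with $b(P_a) = b_0$, $c(P_a) = c_0$, and by construction $P_{b(P)} P_{c(P)} = P$ for $P$ near $P_a$.

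The key step is the invertibility of $d\mu_{(b_0,c_0)}$. I would compute it explicitly: writing $P_b(Z) + \epsilon\, \beta(Z)$ and $P_c(Z) + \epsilon\, \gamma(Z)$ with $\deg\beta < n_b$, $\deg\gamma < n_c$ (these are exactly the tangent directions to the spaces of \emph{monic} polynomials), one finds
\[
  d\mu_{(b_0,c_0)}(\beta,\gamma) = P_b \gamma + P_c \beta \in \{\text{polynomials of degree} < n\}.
\]
So $d\mu_{(b_0,c_0)}$ is the linear map $(\beta,\gamma) \mapsto P_b\gamma + P_c\beta$ between two $\C$-vector spaces of dimension $n$. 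Its injectivity is precisely the statement that $P_b,P_c$ are coprime: if $P_b\gamma + P_c\beta = 0$ then $P_b \mid P_c\beta$, and since $\gcd(P_b,P_c)=1$ (they have no common root, hence are coprime in $\C[Z]$, a PID) we get $P_b \mid \beta$, forcing $\beta = 0$ by the degree bound, and then $\gamma = 0$. Since source and target have equal finite dimension, injectivity gives bijectivity. (This is, of course, just the classical B\'ezout/resultant identity: coprimality of $P_b,P_c$ is equivalent to the nonvanishing of their resultant, which is the determinant of essentially this map.)

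I do not anticipate a genuine obstacle; the only point requiring a little care is bookkeeping of dimensions and the identification of tangent spaces to the affine spaces of monic polynomials of fixed degree — one must remember that perturbing a monic polynomial keeps the leading coefficient fixed, so the tangent space is the degree-$(<\!n_b)$ polynomials, and similarly the product $P_bP_c$ stays monic of degree $n$ so its perturbations live in degree $<\!n$. Once this is set up correctly, the derivative computation is a one-line Leibniz rule and the isomorphism claim reduces to the elementary coprimality argument above. The analyticity of $b(P),c(P)$ is automatic from the analytic (indeed polynomial) inverse function theorem, and the initial-value normalization $b(P_a)=b_0$, $c(P_a)=c_0$ is built into the local inverse. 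Finally I would note that the two local analytic branches $b(P),c(P)$ are uniquely determined by the requirement of continuity together with the initial values, since the factorization of a polynomial into two coprime monic factors of prescribed degrees is unique.
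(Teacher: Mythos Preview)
Your proof is correct and follows the same approach as the paper: apply the inverse function theorem to the polynomial multiplication map $\varphi(b,c)=a$, using that the Jacobian is invertible because $P_b$ and $P_c$ are coprime. The paper is simply terser, invoking directly that $\det d\varphi(b,c)$ equals the resultant of $P_b$ and $P_c$, whereas you spell out the derivative $(\beta,\gamma)\mapsto P_b\gamma+P_c\beta$ and verify injectivity by hand---but you yourself note that this is the same resultant identity.
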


\begin{proof}
The splitting $P_a = P_b P_c$ defines on the coefficients a polynomial mapping $\vh$ such that $a = \vh(b,c)$, 
where $a=(a_i)$, $b=(b_i)$, and $c=(c_i)$. The Jacobian determinant  
$\det d\vh(b,c)$ equals the resultant of $P_b$ and $P_c$ which is non-zero by assumption. 
Thus $\vh$ can be inverted locally. 
\end{proof}

If $P_{\tilde a}$ is in Tschirnhausen form and if $\tilde a \ne 0$, then $P_{\tilde a}$ \emph{splits}, i.e., 
$P_{\tilde a} = P_b P_c$ for monic polynomials $P_b$ and $P_c$ with positive degree and without common zero. 
For, if $\la_1,\ldots,\la_n$ denote the roots of $P_{\tilde a}$ and they all coincide, then since 
\[
    \la_1+\cdots+\la_n = \tilde a_1 = 0
\] 
they all must vanish, contradicting $\tilde a \ne 0$.

Let $\tilde a_2,\ldots,\tilde a_n$ denote the coordinates in $\C^{n-1}$ 
($=$ set of polynomials of degree $n$ in Tschirnhausen form).
Fix $k \in \{2,\ldots,n\}$ and let $\tilde p \in \C^{n-1} \cap \{\tilde a_k \ne 0\}$; $\tilde p$ corresponds to the polynomial $P_{\tilde a}$. 
We associate the polynomial 
\begin{gather*}
  Q_{\underline a}(Z) := \tilde a_k^{- n/k} P_{\tilde a} (\tilde a_k^{1/k} Z) 
  = Z^n + \sum_{j=2}^n \tilde a_k^{- j/k} \tilde a_j Z^{n-j},\\
  \underline a_j := \tilde a_k^{- j/k} \tilde a_j, \quad j = 2,\ldots, n, 
\end{gather*}
where some branch of the radical is fixed.  
Then $Q_{\underline a}$ is in Tschirnhausen form and $\underline a_k =1$;
it corresponds to a point $\underline p \in \C^{n-1} \cap \{\underline a_k =1\}$. 
By Lemma~\ref{split} we have a splitting $Q_{\underline a} = Q_{\underline b} Q_{\underline c}$ on some open ball 
$B_\rh(\underline p)$ 
centered at $\underline p$ with radius $\rh>0$. 
In particular, there exist analytic functions $\ps_i$ on $B_\rh(\underline p)$ such that
\begin{equation*} 
  \underline b_i = 
  \ps_i \big(\tilde a_k^{-2/k} \tilde a_2, \tilde a_k^{-3/k} \tilde a_3, \ldots, \tilde a_k^{-n/k} \tilde a_n\big), 
    \quad i = 1,\ldots,\deg Q_{\underline b}.
\end{equation*}
The splitting $Q_{\underline a} = Q_{\underline b} Q_{\underline c}$ induces a splitting 
$P_{\tilde a} = P_b P_c$, where 
\begin{equation} \label{eq:bj}
    b_i =  \tilde a_k^{i/k} 
    \ps_i \big(\tilde a_k^{-2/k} \tilde a_2, \tilde a_k^{-3/k} \tilde a_3, \ldots, \tilde a_k^{-n/k} \tilde a_n\big), 
    \quad i = 1,\ldots,n_b := \deg P_b;
\end{equation}
likewise for $c_j$. 
Shrinking $\rh$ slightly, we may assume that $\ps_i$ and all its partial derivatives are bounded on $B_\rh(\underline p)$. 
Let $\tilde b_j$ denote the coefficients of the polynomial $P_{\tilde b}$ resulting from $P_b$ by the  
Tschirnhausen transformation. Then, by \eqref{Tschirnhausen}, 
\begin{equation} \label{eq:tildebj}
    \tilde b_i = \tilde a_k^{i/k} 
    \tilde \ps_i \big(\tilde a_k^{-2/k} \tilde a_2, \tilde a_k^{-3/k} \tilde a_3, \ldots, \tilde a_k^{-n/k} \tilde a_n\big), 
    \quad i = 2,\ldots,n_b,
\end{equation}
for analytic functions $\tilde \ps_i$ which, together with all their partial derivatives, are bounded on $B_\rh(\underline p)$.

\subsection{Universal splitting of polynomials in Tschirnhausen form} \label{universal}

The set 
\begin{equation} \label{eq:compactK}
  K:= \bigcup_{k=2}^n\{(\ul a_2, \ldots,\ul a_n) \in \C^{n-1} : \ul a_k=1,~ |\ul a_j| \le 1 \text{ for }j \ne k\}
\end{equation}
is compact. For each point $\underline p \in K$
there exists $\rh(\underline p)>0$ such that we have a splitting  
$P_{\tilde a} = P_b P_{c}$ on the open ball $B_{\rh(\underline p)}(\underline p)$, and we fix this splitting; 
cf.\ Section \ref{ssec:split}.
Choose a finite subcover of $K$ by open balls $B_{\rh_\de}(\underline p_\de)$, $\de \in \De$. 
Then there exists $\rh>0$ such that for every $\underline p \in K$ there is a $\de \in \De$ such that 
$B_\rh(\underline p) \subseteq B_{\rh_\de}(\underline p_\de)$.

To summarize, for each integer $n \ge 2$ we have fixed 
\begin{itemize}
   \item a finite cover $\cB$ of $K$ by open balls $B$,
   \item a splitting $P_{\tilde a} = P_b P_{c}$ on each $B \in \cB$ together with analytic functions $\ps_i$ and 
   $\tilde \ps_i$ which are bounded on $B$ along with all their partial derivatives,
   \item a positive number $\rh$ such that for each $\underline p \in K$ there is a $B \in \cB$ such that 
   $B_\rh(\underline p) \subseteq B$ (note that $2\rh$ is a Lebesgue number of the cover $\cB$).  
 \end{itemize} 
We will refer to this data as a \emph{universal splitting of polynomials of degree $n$ in Tschirnhausen form}
and to $\rh$ as the \emph{radius of the splitting}.

\subsection{Coefficient estimates}

The following estimates are crucial. (Here it is convenient to number the coefficients in reversed order.)

\begin{lemma} \label{lem:interpol}
  Let $m\ge 1$ be an integer and $\al \in (0,1]$.
  Let $P(x) = a_1 x + \cdots + a_{m} x^{m} \in \C[x]$ satisfy 
  \begin{equation} \label{interpol1}
    |P(x)| \le A (1+M x^{m+\al}), \quad \text{ for }~ x \in [0,B] \subseteq \R,
  \end{equation}
  and constants $A, M \ge 0$ and $B>0$. Then 
  \begin{equation} \label{interpol2}
    |a_j| \le C A (1+M^{j/(m+\al)} B^j) B^{-j}, \quad j=1,\ldots,m,
  \end{equation}
  for a constant $C$ depending only on $m$ and $\al$.
\end{lemma}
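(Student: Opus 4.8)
The plan is to reduce the statement to a finite family of linear algebra estimates by sampling the polynomial $P$ at the points $B/1, B/2, \ldots, B/m$ (or equivalently $B, B/2, \ldots, B/m$) and inverting the resulting Vandermonde-type system. First I would normalize: set $x = Bs$ with $s \in [0,1]$, and write $Q(s) := P(Bs) = \sum_{j=1}^m (a_j B^j) s^j$. The hypothesis \eqref{interpol1} becomes $|Q(s)| \le A(1 + M B^{m+\al} s^{m+\al}) \le A(1 + M B^{m+\al})$ for $s \in [0,1]$, since $s^{m+\al} \le 1$. So it suffices to show $|a_j B^j| \le C\, A\,(1 + M^{j/(m+\al)} B^j)$ for each $j$, and by the crude bound just noted we at least get $|a_j B^j| \le C A (1 + M B^{m+\al})$; the point of the sharper statement is to replace the exponent $m+\al$ on $M$ by $j/(m+\al)\cdot$ something — more precisely to get $M^{j/(m+\al)} B^j$ rather than $M B^{m+\al} B^j$ — wait, let me reconsider: the claimed bound is $|a_j| \le CA(1 + M^{j/(m+\al)}B^j)B^{-j}$, i.e. $|a_j B^j| \le CA(1 + M^{j/(m+\al)} B^j)$, which is genuinely stronger than $|a_j B^j| \le CA(1+MB^{m+\al})$ only when $M B^{m+\al}$ is large; when $M B^{m+\al} \le 1$ both bounds are comparable to $A$.

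So the real content is the regime $MB^{m+\al} \ge 1$, and the strategy there is a scaling/dichotomy argument. Introduce $\la := M^{1/(m+\al)} B \ge 1$. Substitute $x = u/M^{1/(m+\al)}$, i.e. let $u$ range over $[0,\la]$; then $R(u) := P(u M^{-1/(m+\al)}) = \sum_j (a_j M^{-j/(m+\al)}) u^j$ satisfies $|R(u)| \le A(1 + u^{m+\al}) \le 2A u^{m+\al}$ for $u \ge 1$ and $|R(u)| \le 2A$ for $u \le 1$. Now I would apply a fixed (universal, depending only on $m$) interpolation estimate: evaluating $R$ at $u = \la/m, 2\la/m, \ldots, \la$ and inverting the Vandermonde matrix in the variables $v_i = i\la/m$ expresses each coefficient $a_j M^{-j/(m+\al)}$ as a linear combination, with coefficients that are rational functions of the $v_i$ homogeneous of degree $-j$, of the values $R(v_i)$. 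Since each $|R(v_i)| \le 2A(1 + v_i^{m+\al}) \le 2A(1 + \la^{m+\al})$ and the inverse-Vandermonde coefficients for the nodes $i\la/m$ scale like $(\la/m)^{-j}$ times a constant depending only on $m$, one gets $|a_j M^{-j/(m+\al)}| \le C(m)\, A\, (1+\la^{m+\al})\, \la^{-j}$. Unwinding $\la = M^{1/(m+\al)}B$ yields $|a_j| \le C(m) A (1 + M^{(m+\al)/(m+\al)} B^{m+\al}) M^{j/(m+\al)} M^{-j/(m+\al)} B^{-j} \cdot (\ldots)$ — I would need to track this carefully, but the homogeneity works out to $|a_j| \le C(m,\al) A (1 + M B^{m+\al})(M^{1/(m+\al)}B)^{-j}$, and then splitting the factor $(1 + MB^{m+\al})(M^{1/(m+\al)}B)^{-j}$ into the two cases $MB^{m+\al} \lessgtr 1$ recovers exactly $CA(1 + M^{j/(m+\al)}B^j)B^{-j}$: when $MB^{m+\al}\le 1$ it is $\lesssim B^{-j}(M^{1/(m+\al)}B)^{-j+(m+\al)}\cdots$ — again the bookkeeping needs care, so I would instead simply prove the clean homogeneous bound $|a_j| \le C(m)A(1+MB^{m+\al})^{j/(m+\al)}B^{-j}$ directly (using $|R(v_i)| \le 2A(1+\la^{m+\al})$ and that $(1+\la^{m+\al})^{j/(m+\al)}$ dominates $(1+\la^{m+\al})\la^{-(m+\al)+j}$ up to constants) and check that $(1+MB^{m+\al})^{j/(m+\al)} \le C(1 + M^{j/(m+\al)}B^j)$ by subadditivity of $t\mapsto t^{j/(m+\al)}$ on $[0,\infty)$ since $j/(m+\al)\le 1$.

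The main obstacle — really the only nontrivial point — is making the Vandermonde inversion uniform: I must produce a single constant $C(m,\al)$ that works for all $A, M, B$ simultaneously, which forces me to choose the sample nodes proportionally to $B$ (equivalently, to work on the normalized interval $[0,1]$ or $[0,\la]$) rather than at fixed absolute positions, and then to control the inverse of the Vandermonde matrix on the nodes $1/m, 2/m, \ldots, 1$, whose entries and determinant are explicit constants depending only on $m$. The homogeneity degrees then take care of the $B$-dependence automatically. I do not expect any difficulty with the H\"older exponent $\al$: it enters only through the elementary inequality $s^{m+\al}\le 1$ on $[0,1]$ and through the final subadditivity step $(\,\cdot\,)^{j/(m+\al)}$, both of which are immediate. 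I would present the proof by (i) reducing to $[0,1]$, (ii) stating the uniform Vandermonde/interpolation lemma on $m$ fixed nodes as a sub-claim, (iii) applying it after the scaling $x \mapsto M^{1/(m+\al)}x$, and (iv) the one-line convexity cleanup to match the stated form of \eqref{interpol2}.
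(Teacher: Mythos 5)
Your overall strategy (normalize $B$, rescale by $M^{-1/(m+\al)}$, invert a Vandermonde system, handle a dichotomy on $MB^{m+\al}$) is the same as the paper's, but the concrete choice of interpolation nodes you propose does not give the stated bound, and this is not a bookkeeping issue but a genuine gap.

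In the rescaled variable $u$, the polynomial $R(u)=\sum_j c_j u^j$ with $c_j=a_jM^{-j/(m+\al)}$ is only controlled by $|R(u)|\le A(1+u^{m+\al})$ on $[0,\la]$ with $\la=M^{1/(m+\al)}B$. You place your $m$ nodes at $v_i=i\la/m$, so the largest one is $v_m=\la$, where the bound is $\approx A\la^{m+\al}$, not $O(A)$. Inverting the Vandermonde at these nodes therefore yields
$|c_j|\,\la^j\le C(m)A(1+\la^{m+\al})$, i.e. $|c_j|\le C(m)A(1+\la^{m+\al})\la^{-j}$, which after unwinding gives $|a_j|\le CA(1+MB^{m+\al})B^{-j}$. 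The target is $|a_j|\le CA(1+\la^j)B^{-j}$, and since $j\le m<m+\al$ the factor $1+\la^{m+\al}$ is \emph{not} $\lesssim 1+\la^j$ as $\la\to\infty$: your estimate loses a factor of order $\la^{m+\al-j}$ exactly in the regime $MB^{m+\al}\gg 1$ that you correctly identified as ``the real content.'' The fallback ``clean homogeneous bound'' $|a_j|\le C A(1+MB^{m+\al})^{j/(m+\al)}B^{-j}$ would indeed suffice (by subadditivity of $t\mapsto t^{j/(m+\al)}$), but it also does not follow from your Vandermonde step, for the same reason: $(1+\la^{m+\al})$ is not $\lesssim(1+\la^{m+\al})^{j/(m+\al)}$.

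The missing idea is that the nodes must be placed at scale $O(1)$ in the $u$-variable — equivalently, at scale $M^{-1/(m+\al)}$ in the original variable $x$ — because that is where the a priori bound $A(1+u^{m+\al})$ is $O(A)$, not at scale $\la$. With fixed nodes such as $u_i=i/m$, $i=1,\dots,m$ (all in $(0,1]\subseteq[0,\la]$ once $\la\ge1$), the Vandermonde matrix has entries and inverse depending only on $m$, each $|R(u_i)|\le 2A$, and one gets directly $|c_j|\le C(m)A$, hence $|a_j|\le C(m)AM^{j/(m+\al)}$; the case $\la<1$ is handled by the crude bound and the $M=0$ case, as you already observed. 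This is precisely what the paper does: it samples $P$ at $x_k=(k/(m+\al-k))^{1/(m+\al)}M^{-1/(m+\al)}$, chosen so that $|P(x_k)|\le\tilde C_k$ is $O(A)$, and these $x_k$ form a Vandermonde system in the fixed constants $(k/(m+\al-k))^{1/(m+\al)}$, with the dichotomy ``$x_k\le 1$ for all $k$'' versus ``some $x_k>1$'' playing the role of your $MB^{m+\al}\gtrless 1$.
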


\begin{proof}
The statement is well-known if $M=0$; see \cite[Lemma 3.4]{ParusinskiRainerHyp}. Assume that $M>0$.

It suffices to consider the special case $A=B=1$.  
The general case follows by applying the special case to $Q(x)= A^{-1} P(Bx) =  b_1 x + \cdots + b_{m} x^{m}$, where 
$b_i= A^{-1} B^i  {a_i}$.  

Fix $k \in \{1,\ldots,m\}$ and write the inequality \eqref{interpol1} in the form 
\begin{equation} \label{interpol3}
  |x^{-k} P(x)| \le x^{-k} + M x^{m+\al-k}.
\end{equation}
The function on the right-hand side of \eqref{interpol3} attains is minimum on $\{x>0\}$ at the point 
\begin{equation} \label{interpol5}
  x_k = \big(\tfrac k{m+\al-k}\big)^{1/(m+\al)} M^{-1/(m+\al)},  
\end{equation}
and this minimum is of the form $C_k M^{k/(m+\al)}$ for some $C_k$ depending only on $k$, $m$, and $\al$.
Thus, provided that $x_k \le 1$, we get 
\begin{equation} \label{interpol4}
  |P(x_k)| \le \tilde C_k, 
\end{equation}
for some $\tilde C_k$ depending only on $k$, $m$, and $\al$.

Suppose first that $x_k \le 1$ for all $k= 1,\ldots,m$ and consider 
\begin{equation*}
    a_1 x_k + \cdots + a_{m} x_k^{m} = P(x_k), \quad k= 1,\ldots,m, 
\end{equation*}
as a system of linear equations with the unknowns $a_j M^{-j/(m+\al)}$ and the (Vandermonde-like) matrix 
\begin{equation*}
  L = \big(\big(\tfrac{k}{m+\al-k}\big)^{j/(m+\al)}\big)_{k,j=1}^{m}.   
\end{equation*} 
Then the vector of unknowns is given by 
\[
  (a_1 M^{-1/(m+\al)}, \ldots , a_{m} M^{-m/(m+\al)})^T = L^{-1}  (P(x_1), P(x_2), \ldots , P(x_{m}))^T.
\] 
By \eqref{interpol4}, we may conclude that 
\[
  |a_j| \le C M^{j/(m+\al)}, \quad j=1,\ldots,m,
\]
for a constant $C$ depending only on $m$ and $\al$, that is \eqref{interpol2}.

If $x_k > 1$ then $M < k/(m+\al-k)$, by \eqref{interpol5}. Hence, using \eqref{interpol1}, for $x \in [0,1]$,
\[
  |P(x)| \le 1+ M x^{m+\al} \le 1+ \tfrac k {m+\al-k} \le \tfrac{m+\al}{\al}. 
\]
In this case we may apply the lemma with $M=0$, $A = (m+\al)/\al$, and $B=1$, and obtain 
\[
  |a_j| \le C , \quad j=1,\ldots,m,
\]
for a constant $C$ depending only on $m$ and $\al$, which implies \eqref{interpol2}.
\end{proof}

As a consequence we get estimates for the intermediate derivatives of a finitely differentiable function in terms of the 
function and its highest derivative. 
For an interval $I \subseteq \R$ and a function $f : I \to \C$ we define
\[
  V_I(f) := \sup_{t,s \in I} |f(t)-f(s)|.
\]

\begin{lemma} \label{taylor} 
  Let $I \subseteq \R$ be a bounded open interval, $m \in \N_{>0}$, and $\al \in (0,1]$.
  If $f\in C^{m,\al}(\overline I)$, then for all $t\in I$ 
  and  $s = 1,\ldots,m$,  
    \begin{align}\label{eq:1}  
    |f^{(s)}(t) | \le C |I|^{-s} \bigl(V_I(f) + V_I(f)^{(m+\al-s)/(m+\al)} (\Hoeld_{\al,I}(f^{(m)}))^{s/(m+\al)}  |I|^s
    \bigr),  
  \end{align}
  for a universal constant $C$ depending only on $m$ and $\al$.
\end{lemma}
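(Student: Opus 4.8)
\textbf{Proof plan for Lemma \ref{taylor}.}
The plan is to reduce the statement to Lemma \ref{lem:interpol} by writing out the Taylor expansion of $f$ at a point and viewing the derivatives $f^{(s)}(t)$ as (essentially) the coefficients of a polynomial in a real variable that ranges over an interval of length comparable to $|I|$. First I would fix $t \in I$ and choose a subinterval of $I$ of length $\ge |I|/2$ having $t$ as one endpoint (this is always possible since $t \in I$), so that we may work on $[0,B]$ with $B \ge |I|/2$ after the obvious affine change of variables $x \mapsto t + x$ (or $t - x$). Set $Q(x) := f(t+x) - f(t)$, a $C^{m,\al}$ function on $[0,B]$ with $Q(0)=0$, whose Taylor polynomial of degree $m$ at $0$ is $P(x) = \sum_{j=1}^m \frac{f^{(j)}(t)}{j!} x^j$; the remainder is controlled by $\Hoeld_{\al,I}(f^{(m)})$ via the integral form of Taylor's theorem, giving $|Q(x) - P(x)| \le \frac{1}{m!}\Hoeld_{\al,I}(f^{(m)}) x^{m+\al}$ for $x \in [0,B]$.

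Next I would estimate $|P(x)|$ on $[0,B]$. Since $|P(x)| \le |Q(x)| + |Q(x)-P(x)| \le V_I(f) + \frac{1}{m!}\Hoeld_{\al,I}(f^{(m)}) x^{m+\al}$, the polynomial $P$ satisfies the hypothesis \eqref{interpol1} of Lemma \ref{lem:interpol} with $A = \max\{V_I(f), 1\}$ — or more precisely, after dividing through, with $A = V_I(f)$ and $M = \frac{1}{m!}\Hoeld_{\al,I}(f^{(m)}) / V_I(f)$ when $V_I(f) \ne 0$ (the degenerate case $V_I(f)=0$ forces $f$ constant, hence all derivatives vanish and \eqref{eq:1} is trivial). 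Then Lemma \ref{lem:interpol} yields, for each $j = 1,\ldots,m$,
\[
  \Bigl|\frac{f^{(j)}(t)}{j!}\Bigr| \le C\, V_I(f)\,\bigl(1 + M^{j/(m+\al)} B^j\bigr) B^{-j},
\]
and substituting back the value of $M$ and using $B \ge |I|/2$ (so $B^{-j} \le 2^j |I|^{-j}$ and $B^j$ is comparable to $|I|^j$) gives
\[
  |f^{(j)}(t)| \le C\, |I|^{-j}\bigl(V_I(f) + V_I(f)^{1 - j/(m+\al)} (\Hoeld_{\al,I}(f^{(m)}))^{j/(m+\al)} |I|^j\bigr),
\]
which is exactly \eqref{eq:1} with $s = j$, after absorbing the $j!$ and the powers of $2$ into the universal constant $C = C(m,\al)$.

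The only point requiring genuine care — and the place I expect to be the main obstacle — is the bookkeeping of the two regimes in Lemma \ref{lem:interpol} (whether the critical points $x_k$ from \eqref{interpol5} lie inside $[0,B]$ after rescaling) together with the normalization: Lemma \ref{lem:interpol} is stated with the constraint $x \in [0,B]$, so one must verify that the remainder estimate genuinely holds up to $x = B \ge |I|/2$ and not merely for small $x$; since $Q$ and its Taylor polynomial are both defined on all of the chosen subinterval, this is fine, but the affine rescaling to reach the normalized form $A = B = 1$ of that lemma needs to be tracked so that the final constant depends only on $m$ and $\al$. A secondary technical nuisance is the case distinction $V_I(f) = 0$ versus $V_I(f) > 0$ needed to divide by $V_I(f)$; this is handled in one line as noted above. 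Everything else is a routine substitution.
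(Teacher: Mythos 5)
Your proposal is correct and follows essentially the same route as the paper: Taylor-expand $f$ at $t$ on a subinterval of $I$ having $t$ as an endpoint, bound the remainder by $\Hoeld_{\al,I}(f^{(m)})\,x^{m+\al}$, and invoke Lemma~\ref{lem:interpol} with $A=V_I(f)$. The worry you flag about the two regimes ($x_k$ inside or outside $[0,B]$) is a non-issue here, since that case split is handled internally in the proof of Lemma~\ref{lem:interpol} and does not appear in its statement, which is all you need.
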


\begin{proof}
We may suppose that $I=(-\delta, \delta)$. If $t \in I$ then at least one of the two intervals
 $[t,t\pm \delta )$, say $[t,t+ \delta )$, is included in $I$.  
 By Taylor's formula, for $t_1\in [t,t +\delta )$, 
\begin{align*}
      \sum_{s=1}^{m}  \frac{{f}^{(s)}(t)}{s!} (t_1-t)^s 
       & = f(t_1)-f(t) 
       -  \int_0^1 \frac {(1-\ta)^{m-1}}{(m-1)!} \big(f^{(m)}(t+\ta(t_1-t))-f^{(m)}(t)\big)\, d\ta\, (t_1-t)^m      
\end{align*}
and hence
    \begin{align*}
      \Big|\sum_{s=1}^{m}  \frac{{f}^{(s)}(t)}{s!} (t_1-t)^s\Big| 
      & \le V_I(f)+  \Hoeld_{\al,I}(f^{(m)})  (t_1-t)^{m+\al}
      \\
      & = V_I(f)\big(1 + V_I(f)^{-1} \Hoeld_{\al,I}(f^{(m)})  (t_1-t)^{m+\al} \big).        
    \end{align*}
The assertion follows from Lemma~\ref{lem:interpol}.   
\end{proof}

\subsection{Higher order Glaeser inequalities} \label{Glaeser}

As a corollary of Lemma \ref{taylor} we obtain a generalization of 
Ghisi and Gobbino's higher order Glaeser inequalities \cite[Proposition~3.4]{GhisiGobbino13}.

\begin{corollary}
  Let $m \in \N_{>0}$ and $\al \in (0,1]$.
  Let $I = (t_0- \de,t_0+\de)$ with $t_0 \in \R$ and $\de>0$.
  If $f\in C^{m,\al}(\overline I)$ is such that $f$ and $f'$ do not change their sign on $I$,
  then for all $s = 1,\ldots,m$,
  \begin{align}\label{eq:2}  
    |f^{(s)}(t_0) | \le C |I|^{-s} \bigl(|f(t_0)| + |f(t_0)|^{(m+\al-s)/(m+\al)} (\Hoeld_{\al,I}(f^{(m)}))^{s/(m+\al)}  |I|^s
    \bigr),  
  \end{align}
  for a universal constant $C$ depending only on $m$ and $\al$.
\end{corollary}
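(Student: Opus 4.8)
The plan is to derive \eqref{eq:2} directly from Lemma~\ref{taylor} by showing that, under the sign hypotheses on $f$ and $f'$, the quantity $V_I(f)$ appearing in \eqref{eq:1} is controlled by $|f(t_0)|$. First I would observe that, since $f'$ does not change sign on $I$, the function $f$ is monotone on $I$; hence for every $t,s \in I$ one has $|f(t)-f(s)| \le |f(t_0-\delta^+) - f(t_0+\delta^-)|$ where the endpoint values are taken as limits (recall $f \in C^{m,\al}(\overline I)$, so $f$ extends continuously to $\overline I$). By monotonicity the supremum defining $V_I(f)$ is attained at the endpoints, so $V_I(f) = |f(t_0-\delta)-f(t_0+\delta)|$ (using the continuous extension).

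The key point is then that $f$ does not change sign either: both $f(t_0-\delta)$ and $f(t_0+\delta)$ have the same sign (or are zero), so $|f(t_0-\delta)-f(t_0+\delta)| = \big||f(t_0-\delta)|-|f(t_0+\delta)|\big| \le \max\{|f(t_0-\delta)|,|f(t_0+\delta)|\}$. Since $t_0$ is the midpoint of $I$ and $f$ is monotone, $f(t_0)$ lies between the two endpoint values, hence $|f(t_0)| \ge \min\{|f(t_0-\delta)|,|f(t_0+\delta)|\}$; that is not quite what I want. Instead I would argue: monotonicity of $f$ together with constancy of sign forces $|f|$ to be monotone on each of the two half-intervals $[t_0-\delta,t_0]$ and $[t_0,t_0+\delta]$, but more simply, $f$ monotone and of constant sign implies $|f|$ is either monotone on all of $I$ or V-shaped with minimum in the interior — but $f$ monotone rules out the V-shape unless $f$ vanishes, so $|f|$ is monotone on $I$. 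Wait — more carefully: $f$ monotone means $|f|$ is monotone on $I$ provided $f$ has constant sign. Then $V_I(f) = V_I(|f|) = \big||f(t_0-\delta)| - |f(t_0+\delta)|\big|$, and since $|f|$ is monotone with $t_0$ the midpoint, $|f(t_0)|$ is at least the smaller endpoint value; but I need an \emph{upper} bound on $V_I(f)$ by $|f(t_0)|$, which requires $|f(t_0)|$ to dominate the \emph{larger} endpoint value. That fails in general (e.g. $f(t)=t$ near $t_0$ slightly negative).

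The correct fix, and the real content of the corollary, is to apply Lemma~\ref{taylor} not on $I$ itself but on the half-interval $J \subseteq I$ on which $|f|$ is largest and which has $t_0$ as an endpoint — or rather, to apply it on a sub-interval where $V$ is comparable to $|f(t_0)|$. Concretely: since $|f|$ is monotone on $I$, say $|f(t_0-\delta)| \le |f(t_0+\delta)|$, the interval $J := [t_0, t_0+\delta]$ has $t_0$ as an endpoint, satisfies $|J| = |I|/2$, and on it $V_J(f) = |f(t_0+\delta)| - |f(t_0)| \le |f(t_0+\delta)|$. That still has the wrong variable. The genuinely correct approach is to replace $I$ by the sub-interval $I' \subseteq I$ centered at $t_0$ (or with $t_0$ as endpoint) whose radius $\delta'$ is the \emph{largest} value $\le \delta$ such that $V_{I'}(f) \le 2|f(t_0)|$ — using continuity of $f$ and $f(t_0)$'s position to show such $\delta'$ exists and that either $\delta' = \delta$ (done immediately) or $V_{I'}(f) = 2|f(t_0)|$ exactly; then apply Lemma~\ref{taylor} on $I'$, getting a bound with $|I'|^{-s}$, and finally bound $|I'|^{-s}$ by a constant times $|I|^{-s}$ times the appropriate extra factor coming from the defining equation $V_{I'}(f) = 2|f(t_0)|$ — this is exactly the Ghisi--Gobbino mechanism and mirrors \eqref{introkey} in the introduction.

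I expect the main obstacle to be precisely this last calibration step: handling the two cases ($I' = I$, versus $V_{I'}(f)$ pinned to $2|f(t_0)|$) and, in the pinned case, showing that the loss in passing from $|I'|^{-s}$ to $|I|^{-s}$ is absorbed into the second term on the right-hand side of \eqref{eq:2} (one uses $V_{I'}(f) = 2|f(t_0)|$ to trade a power of $|I'|$ for a power of $|f(t_0)|$ and of the Hölder constant, exactly balancing the exponents $(m+\al-s)/(m+\al)$ and $s/(m+\al)$). The monotonicity and sign bookkeeping is routine; the exponent arithmetic in the pinned case is where care is needed, but it is identical in spirit to \cite[Proposition~3.4]{GhisiGobbino13} and to the proof of Lemma~\ref{taylor} via Lemma~\ref{lem:interpol}.
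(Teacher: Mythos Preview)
Your initial instinct is right, but you took a wrong turn and then overcorrected into an unnecessarily elaborate argument. The paper's proof is two lines: after replacing $f$ by $-f$ and $t$ by $-t$ if necessary, one may assume $f\ge 0$ and $f'\le 0$; then on the half-interval $[t_0,t_0+\de)$ the function $f$ is nonnegative and nonincreasing, so $V_{[t_0,t_0+\de)}(f)=f(t_0)-f(t_0+\de^-)\le f(t_0)=|f(t_0)|$, and the proof of Lemma~\ref{taylor} (which, as written there, only ever uses Taylor's formula on a single half-interval of length $\de=|I|/2$ anyway) gives \eqref{eq:2} directly.

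The step where you went astray is the choice of half-interval: you picked the side where $|f|$ is \emph{largest}, obtaining $V_J(f)\le |f(t_0+\de)|$, which is useless. You should pick the side where $|f|$ is \emph{decreasing away from} $t_0$ (equivalently, where $|f(t_0)|$ is the maximum on that half), giving $V_J(f)\le |f(t_0)|$ immediately. The sign hypotheses on $f$ and $f'$ together guarantee that such a side always exists: if $f$ and $f'$ have the same sign then $|f|$ is increasing, so take the left half; if they have opposite signs then $|f|$ is decreasing, so take the right half. No maximal sub-interval $I'$, no pinned case, no calibration of exponents is needed.
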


\begin{proof}
  For simplicity assume $t_0=0$.
  Changing $f$ to $-f$ and $t$ to $-t$ if necessary, we may assume that $f(t) \ge 0$ and $f'(t)\le 0$ for all $t \ge 0$.
  Then $V_{[0,\de)}(f) \le f(0)$ and so \eqref{eq:2} follows from \eqref{eq:1}.
\end{proof}

For $s=1$ we recover \cite[Proposition~3.4]{GhisiGobbino13}. Indeed, 
for $s =1$ we may write \eqref{eq:2} as 
\begin{align} \label{eq:3}   
    |f'(t_0) | \le 
    C  |f(t_0)|^{(m+\al-1)/(m+\al)} \max\bigl\{|f(t_0)|^{1/(m+\al)} |I|^{-1}, (\Hoeld_{\al,I}(f^{(m)}))^{1/(m+\al)}\bigr\},  
\end{align}
and the inequality in \cite[Proposition~3.4]{GhisiGobbino13} can be written as 
\begin{align} \label{eq:4}  
    |f'(t_0) | \le C  |f(t_0)|^{(m+\al-1)/(m+\al)} 
    \max\bigl\{|f'(t_0)|^{1/(m+\al)} |I|^{-1+1/(m+\al)}, (\Hoeld_{\al,I}(f^{(m)}))^{1/(m+\al)}  \bigr\}.  
\end{align}
These two inequalities are equivalent in the following sense:  
 if \eqref{eq:3} holds with the constant $C>0$ then \eqref{eq:4} holds with the constant 
 $\max \{C,C^{(m+\al-1)/(m+\al)}\}$, and, 
 symmetrically,  if \eqref{eq:4} holds with the constant $C>0$ then \eqref{eq:3} holds with the constant 
 $\max \{C,C^{(m+\al)/(m+\al-1)}\}$.  For instance, suppose that \eqref{eq:3} holds.  
 If the second term in the maximum (in \eqref{eq:3}) is dominant, 
 then \eqref{eq:4} holds with the same constant.  If the first term is dominant in the 
 maximum, that is $|f'(t_0)| \le C |f(t_0)| |I|^{-1}$, then $|f'(t_0)| ^{(m+\al-1)/(m+\al)} \le (C
  |f(t_0)| |I|^{-1})^{(m+\al-1)/(m+\al)}$ and \eqref{eq:4} holds 
  with the constant  $C^{(m+\al-1)/(m+\al)}$.

\section{Estimates for the iterated derivatives of the coefficients} \label{aestimates}

In the next three sections we collect the necessary tools for the proof of Theorem \ref{main}. 
In the current section we derive estimates for the derivatives of the coefficients of a 
$C^{n-1,1}$-curve of polynomials of degree $n$ in Tschirnhausen form.    

\subsection{Preparations for the splitting}

Let $I\subseteq \R$ be a bounded open interval and let 
\begin{equation} \label{eq:polynomial}
  P_{\tilde a(t)}(Z) = Z^n + \sum_{j=2}^n \tilde a_j(t) Z^{n-j},\quad t \in I,    
\end{equation} 
be a monic complex polynomial in Tschirnhausen form with coefficients $\tilde a_j \in C^{n-1,1}( \oI )$, $j = 2,\ldots,n$.
We make the following assumptions.
Suppose that $t_0\in I$ and $k \in \{2,\dots,n\}$ are such that 
\begin{equation} \label{eq:k}
  |\tilde a_k(t_0)|^{1/k} =  \max_{2 \le j\le n} |\tilde a_j(t_0)|^{1/j}\ne 0  
 \end{equation} 
and that, for some positive constant $B<  1/3$,  
\begin{align}\label{assumpt}
\sum_{j=2}^n \|(\tilde a_j^{1/j})'\|_{L^1 (I)} \le  B |\tilde a_k(t_0)|^{1/k} .
\end{align}

By Corollary~\ref{cor:radicals}, 
every continuous selection $f$ of 
the multi-valued function $\tilde a_j^{1/j}$ is absolutely continuous on $I$, and $\|f'\|_{L^1(I)}$ is independent of the 
choice of the selection (by \eqref{eq:extend}).  
(By a selection of a set-valued function 
$F : X \leadsto Y$ we mean a single-valued function $f : X \to Y$ such that $f(x) \in F(x)$ 
for all $x \in X$.)
So henceforth we shall fix one continuous selection of $\tilde a_j^{1/j}$ and, abusing notation, 
denote it by $\tilde a_j^{1/j}$ as well.

\begin{lemma} \label{lem1}
  Assume that the polynomial \eqref{eq:polynomial} satisfies \eqref{eq:k}--\eqref{assumpt}. 
  Then for all $t \in I$ and $j=2,\ldots,n$,
  \begin{align} \label{eq:ass10} 
    |\tilde a_j^{1/j} (t) - \tilde a_j^{1/j} (t_0)| \le  B |\tilde a_k (t_0)|^{1/k},  
  \end{align}
  \begin{align} \label{eq:ass11}  
    \frac 2 3 <    1-B \le \Big | \frac{\tilde a_k(t)}{\tilde a_k(t_0)} \Big |^{1/k}   \le 1+B < \frac4 3,  
  \end{align}
  \begin{equation} \label{eq:ass12}
    |\tilde a_j(t)|^{1/j}\le \frac 4 3 |\tilde a_k(t_0)|^{1/k} \le  2  |\tilde a_k(t)|^{1/k}.
  \end{equation}
\end{lemma}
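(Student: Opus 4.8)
The three inequalities are all elementary consequences of the hypotheses \eqref{eq:k}--\eqref{assumpt} combined with the extension lemma (Lemma~\ref{lem:extend}), which guarantees that the fixed continuous selection of $\tilde a_j^{1/j}$ is absolutely continuous on $I$ with $\|(\tilde a_j^{1/j})'\|_{L^1(I)}$ independent of the selection. I would prove them in the stated order, since \eqref{eq:ass10} is the engine for the other two.

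\textbf{Step 1: proof of \eqref{eq:ass10}.} Fix $j$ and $t \in I$. Since the chosen selection $\tilde a_j^{1/j}$ is absolutely continuous on $I$, the fundamental theorem of calculus gives
\[
  |\tilde a_j^{1/j}(t) - \tilde a_j^{1/j}(t_0)| = \Big| \int_{t_0}^{t} (\tilde a_j^{1/j})'(\ta)\, d\ta \Big|
  \le \int_I |(\tilde a_j^{1/j})'|\, d\ta = \|(\tilde a_j^{1/j})'\|_{L^1(I)}.
\]
Summing over $j$ and invoking \eqref{assumpt} bounds this (even a single term) by $B|\tilde a_k(t_0)|^{1/k}$, which is \eqref{eq:ass10}.

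\textbf{Step 2: proof of \eqref{eq:ass11}.} Apply \eqref{eq:ass10} with $j = k$ and use the reverse triangle inequality to get $\big| |\tilde a_k(t)|^{1/k} - |\tilde a_k(t_0)|^{1/k} \big| \le B |\tilde a_k(t_0)|^{1/k}$; here I use that any branch of the radical has modulus equal to the (well-defined) positive $k$-th root of $|\tilde a_k|$. Dividing by $|\tilde a_k(t_0)|^{1/k} \ne 0$ (nonvanishing by \eqref{eq:k}) yields $1 - B \le |\tilde a_k(t)/\tilde a_k(t_0)|^{1/k} \le 1 + B$, and $B < 1/3$ supplies the numerical bounds $\tfrac23 < 1-B$ and $1+B < \tfrac43$.

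\textbf{Step 3: proof of \eqref{eq:ass12}.} For the first inequality, combine \eqref{eq:ass10} with the reverse triangle inequality and the maximality hypothesis \eqref{eq:k}: $|\tilde a_j(t)|^{1/j} \le |\tilde a_j(t_0)|^{1/j} + B|\tilde a_k(t_0)|^{1/k} \le (1+B)|\tilde a_k(t_0)|^{1/k} < \tfrac43 |\tilde a_k(t_0)|^{1/k}$. For the second inequality, the lower bound in \eqref{eq:ass11} gives $|\tilde a_k(t_0)|^{1/k} \le (1-B)^{-1}|\tilde a_k(t)|^{1/k} < \tfrac32 |\tilde a_k(t)|^{1/k}$, so $\tfrac43 |\tilde a_k(t_0)|^{1/k} \le \tfrac43 \cdot \tfrac32 |\tilde a_k(t)|^{1/k} = 2|\tilde a_k(t)|^{1/k}$. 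Chaining these gives \eqref{eq:ass12}.

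\textbf{Main obstacle.} There is no serious obstacle; the only point requiring care is the bookkeeping around the multivalued radicals — one must consistently interpret $\tilde a_j^{1/j}$ as the single fixed continuous (hence absolutely continuous) selection for the integral estimate, while noting that moduli $|\tilde a_j^{1/j}| = |\tilde a_j|^{1/j}$ are selection-independent, so the reverse triangle inequality may be applied freely to the moduli. Everything else is a direct application of the fundamental theorem of calculus and the constraint $B < 1/3$.
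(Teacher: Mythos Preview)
Your proof is correct and follows essentially the same route as the paper: fundamental theorem of calculus plus \eqref{assumpt} for \eqref{eq:ass10}, specialize to $j=k$ and divide for \eqref{eq:ass11}, then combine with \eqref{eq:k} for \eqref{eq:ass12}. One minor attribution slip: the absolute continuity of the selection $\tilde a_j^{1/j}$ is guaranteed by Corollary~\ref{cor:radicals}, not by the extension lemma (Lemma~\ref{lem:extend}), which serves a different purpose.
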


\begin{proof}
  First, \eqref{eq:ass10} is a consequence of \eqref{assumpt},
  \begin{align*}
    |\tilde a_j^{1/j} (t) - \tilde a_j^{1/j} (t_0)| = |\int_{t_0}^t (\tilde a_j^{1/j})' \,ds| 
    \le \| (\tilde a_j^{1/j})'\|_{L^1(I)} \le B |\tilde a_k(t_0)|^{1/k}. 
  \end{align*}
  For $j=k$ it implies
  \begin{align*}
      \Big|\Big | \frac{\tilde a_k(t)}{\tilde a_k(t_0)}\Big |^{1/k} -1\Big| \le  B, 
  \end{align*}
  and thus \eqref{eq:ass11}.
  By \eqref{eq:k}, \eqref{eq:ass10}, and \eqref{eq:ass11}, 
  \[
    |\tilde a_j(t)|^{1/j} \le (1+B) |\tilde a_k(t_0)|^{1/k} \le 2 |\tilde a_k(t)|^{1/k},
  \]
  that is \eqref{eq:ass12}.
\end{proof}

By \eqref{eq:ass11}, $\tilde a_k$ does not vanish on the interval $I$ and so the curve
\begin{align} \label{curve}
    \underline a : I &\to \{(\underline a_2,\dots,\underline a_n) \in \C^{n-1} : \underline a_k=1\} \\
    t &\mapsto \ul a(t) := (\tilde a_k^{-2/k} \tilde a_2, \ldots, \tilde a_k^{-n/k} \tilde a_n)(t) \notag
\end{align}
is well-defined.

\begin{lemma} \label{lem2}
  Assume that the polynomial \eqref{eq:polynomial} satisfies \eqref{eq:k}--\eqref{assumpt}. 
  Then the length of the curve \eqref{curve} 
  is bounded by $3 n^2\, 2^{n} B$.
\end{lemma}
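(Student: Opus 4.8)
The plan is to estimate the arclength of $\underline a$ directly from its definition by summing the $L^1$-norms of the derivatives of the components $\underline a_j = \tilde a_k^{-j/k}\tilde a_j$, $j = 2,\dots,n$ (the $k$-th component being constant $1$ contributes nothing). Since the length is $\int_I |\underline a'(t)|\,dt \le \sum_{j\ne k}\|\underline a_j'\|_{L^1(I)}$, it suffices to bound each $\|\underline a_j'\|_{L^1(I)}$ by roughly $3n\,2^n B$. The natural device is to write each component as a ratio of the fixed continuous selections $\tilde a_j^{1/j}$ and $\tilde a_k^{1/k}$ introduced just before the lemma, namely $\underline a_j = (\tilde a_j^{1/j})^j \cdot (\tilde a_k^{1/k})^{-j}$, differentiate using the product/quotient rule, and then estimate the resulting terms using the pointwise bounds \eqref{eq:ass10}, \eqref{eq:ass11}, \eqref{eq:ass12} from Lemma \ref{lem1} together with the hypothesis \eqref{assumpt} on $\sum_j \|(\tilde a_j^{1/j})'\|_{L^1(I)}$.

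Concretely, I would first set $u_j := \tilde a_j^{1/j}$ and $v := \tilde a_k^{1/k}$, so $\underline a_j = u_j^j v^{-j}$ and
\[
  \underline a_j' = j\,u_j^{j-1} u_j' v^{-j} - j\, u_j^j v^{-j-1} v'.
\]
By \eqref{eq:ass12} we have $|u_j| \le \tfrac43 |v(t_0)|$ and by \eqref{eq:ass11} we have $|v(t)| \ge \tfrac23 |v(t_0)|$, so $|u_j/v| \le 2$ and hence $|u_j|^{j-1} |v|^{-j} \le 2^{j-1}|v|^{-1} \le 2^{j-1}\cdot\tfrac32 |v(t_0)|^{-1}$ and $|u_j|^j |v|^{-j-1} \le 2^j |v|^{-1} \le 2^j \cdot \tfrac32 |v(t_0)|^{-1}$. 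Therefore
\[
  |\underline a_j'| \le j\,2^{j-1}\tfrac32 |v(t_0)|^{-1}\,|u_j'| + j\,2^j\tfrac32 |v(t_0)|^{-1}\,|v'|,
\]
and integrating over $I$, using $\|u_j'\|_{L^1(I)} \le B|v(t_0)|$ (from \eqref{assumpt}) and likewise $\|v'\|_{L^1(I)} \le B|v(t_0)|$, gives $\|\underline a_j'\|_{L^1(I)} \le j\,2^{j-1}\tfrac32 B + j\,2^j\tfrac32 B \le \tfrac92\, j\, 2^{j-1} B \le 3\,n\,2^{n}B$ (absorbing the numerical factor generously). Summing over the at most $n-1$ relevant indices $j$ yields length$(\underline a) \le 3n^2\,2^n B$, which is the claimed bound.

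I do not anticipate a serious obstacle; the only point requiring a little care is the legitimacy of differentiating the fixed selections $u_j$, $v$ and the product rule for them. This is fine because each $u_j = \tilde a_j^{1/j}$ is absolutely continuous on $I$ (Corollary \ref{cor:radicals} via the remark preceding the lemma), $v$ is in fact better behaved since $\tilde a_k$ does not vanish on $I$ by \eqref{eq:ass11} so $v$ is as smooth as $\tilde a_k$, and products and quotients (with nonvanishing denominator) of absolutely continuous functions on a bounded interval are absolutely continuous with the Leibniz rule holding a.e. So the displayed pointwise inequality for $|\underline a_j'|$ holds almost everywhere, which is all that is needed to bound the $L^1$-norm and hence the arclength. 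A minor bookkeeping choice is how crudely to bound the numerical constants; I would just be wasteful enough to land cleanly on the stated $3n^2\,2^n B$.
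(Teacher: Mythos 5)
Your proposal is correct and follows essentially the same approach as the paper's proof. Both compute the derivative of each component $\underline a_j = \tilde a_k^{-j/k}\tilde a_j$ by the product rule (your $u_j^j v^{-j}$ decomposition is just a different notation for the same computation), bound the resulting terms pointwise via \eqref{eq:ass11}--\eqref{eq:ass12}, and integrate using \eqref{assumpt}; you bound each $\|\underline a_j'\|_{L^1(I)}$ separately rather than summing first, which is slightly more wasteful but still lands comfortably under the stated $3n^2 2^n B$.
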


\begin{proof} 
  The estimates \eqref{eq:ass10}, \eqref{eq:ass11}, and \eqref{eq:ass12} 
  imply 
  \begin{align*}
    |\tilde a_k^{- j/k} \tilde a_j'| &\le  2^n |\tilde a_j^{- 1+1/j } \tilde a_j' \tilde a_k^{-1/k}|  
    \le  3n\, 2^{n-1} |(\tilde a_j^{1/j})'|   |\tilde a_k (t_0)|^{-1/k}    
    \\
   | (\tilde a_k^{-j/k})' \tilde a_j| &\le n 2^n |\tilde a_k^{-1/k} (\tilde a_k^{1/k})'| 
   \le  3n\, 2^{n-1} |(\tilde a_k^{1/k})'|  
   |\tilde a_k (t_0)|^{-1/k},
  \end{align*}
  and thus 
  \[
    |(\tilde a_k^{-j/k} \tilde a_j)'| 
    \le 3n\, 2^{n-1} |\tilde a_k (t_0)|^{-1/k} \Big(|(\tilde a_j^{1/j})'| + |(\tilde a_k^{1/k})'|\Big).
  \]
  Consequently, using \eqref{assumpt},
  \begin{align*}
    \int_I |\ul a'| \,ds \le 3 n^2\, 2^{n} B, 
  \end{align*}
  as required.
\end{proof}

\subsection{Estimates for the derivatives of the coefficients}
Let us replace \eqref{assumpt} by the stronger assumption
\begin{align}\label{assumption}
  \const |I| + \sum_{j=2}^n \|(\tilde a_j^{1/j})'\|_{L^1 (I)} \le  B |\tilde a_k(t_0)|^{1/k},
\end{align} 
where 
\begin{equation} \label{M}
  \const = \max_{2 \le j\le n}  (\Lip_I(\tilde a_j^{(n-1)}))^{1/n} |\tilde a_k (t_0)|^{(n-j)/(kn)}.
\end{equation}

\begin{lemma} \label{bounds2}
Assume that the polynomial \eqref{eq:polynomial} satisfies \eqref{eq:k} and \eqref{assumption}. 
Then 
for all  
$ j = 2,\ldots,n$ and  $ s = 1,\ldots,n-1$,  
    \begin{align}\label{est:a} 
    \begin{split}
       \|\tilde a_j^{(s)} \|_{L^\infty(I)} &\le C(n)  |I|^{-s}  |\tilde a_k (t_0)|^{j/k},
    \\
    \Lip_I (\tilde a_j^{(n-1)})  &\le C(n)  |I|^{-n}  |\tilde a_k (t_0)|^{j/k}.
    \end{split}   
  \end{align}
\end{lemma}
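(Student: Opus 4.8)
The plan is to prove the two estimates in \eqref{est:a} simultaneously by applying Lemma~\ref{taylor} to each coefficient $\tilde a_j$ with $m = n-1$ and $\al = 1$, and then to control the right-hand side using the hypotheses \eqref{eq:k} and \eqref{assumption}. Recall that Lemma~\ref{taylor} bounds $|\tilde a_j^{(s)}(t)|$ for $s = 1,\ldots,n-1$ (and, after noting $\Lip_I(\tilde a_j^{(n-1)}) = \Hoeld_{1,I}(\tilde a_j^{(n-1)})$, also the $s=n-1$ Lipschitz constant via a routine variant) in terms of $V_I(\tilde a_j)$ and $\Lip_I(\tilde a_j^{(n-1)})$. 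So the real work is to estimate $V_I(\tilde a_j)$ from above by a multiple of $|\tilde a_k(t_0)|^{j/k}$, and to reconcile the Lipschitz constant $\Lip_I(\tilde a_j^{(n-1)})$ — which appears on both sides of the desired inequality — with the quantity $\const$ defined in \eqref{M}.

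First I would bound $V_I(\tilde a_j)$. Since $\tilde a_j = (\tilde a_j^{1/j})^j$ for the fixed continuous selection, and by \eqref{eq:ass12} we have $|\tilde a_j^{1/j}(t)| \le \tfrac43 |\tilde a_k(t_0)|^{1/k}$ throughout $I$, the elementary identity $u^j - v^j = (u-v)(u^{j-1} + \cdots + v^{j-1})$ gives, for $t,s \in I$,
\[
  |\tilde a_j(t) - \tilde a_j(s)| \le |\tilde a_j^{1/j}(t) - \tilde a_j^{1/j}(s)| \cdot j \Big(\tfrac43 |\tilde a_k(t_0)|^{1/k}\Big)^{j-1} \le j \Big(\tfrac43\Big)^{j-1} |\tilde a_k(t_0)|^{(j-1)/k} \int_I |(\tilde a_j^{1/j})'|,
\]
and then \eqref{assumption} (which is stronger than \eqref{assumpt}, so Lemma~\ref{lem1} applies) yields $\int_I |(\tilde a_j^{1/j})'| \le B|\tilde a_k(t_0)|^{1/k}$, hence $V_I(\tilde a_j) \le C(n) B |\tilde a_k(t_0)|^{j/k} \le C(n)|\tilde a_k(t_0)|^{j/k}$ since $B < 1/3$. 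This is the key step and the one requiring the most care, because one must consistently keep the weight $|\tilde a_k(t_0)|$ rather than $|\tilde a_k(t)|$ or $|\tilde a_j(t)|$; the estimates \eqref{eq:ass10}--\eqref{eq:ass12} are exactly what make this possible.

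Next I would handle the Lipschitz term. From \eqref{assumption} we have $\const |I| \le B|\tilde a_k(t_0)|^{1/k}$, and unravelling the definition \eqref{M}, for each $j$,
\[
  \Lip_I(\tilde a_j^{(n-1)}) \le \const^{\,n} |\tilde a_k(t_0)|^{-(n-j)/k} \le \Big(\tfrac{B}{|I|}\Big)^n |\tilde a_k(t_0)|^{n/k} |\tilde a_k(t_0)|^{-(n-j)/k} = B^n |I|^{-n} |\tilde a_k(t_0)|^{j/k},
\]
which already gives the second inequality of \eqref{est:a} (with $C(n)$ absorbing $B^n \le 1$). Plugging both bounds into the Lemma~\ref{taylor} estimate \eqref{eq:1}: the first summand there contributes $C|I|^{-s} V_I(\tilde a_j) \le C(n)|I|^{-s}|\tilde a_k(t_0)|^{j/k}$, and the second summand contributes
\[
  C|I|^{-s} V_I(\tilde a_j)^{(n-s)/n} \big(\Lip_I(\tilde a_j^{(n-1)})\big)^{s/n} |I|^s \le C(n)|I|^{-s} \big(|\tilde a_k(t_0)|^{j/k}\big)^{(n-s)/n} \big(|I|^{-n}|\tilde a_k(t_0)|^{j/k}\big)^{s/n} |I|^s,
\]
and the powers of $|I|$ and of $|\tilde a_k(t_0)|$ combine to exactly $|I|^{-s}|\tilde a_k(t_0)|^{j/k}$. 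Summing the two contributions gives the first inequality of \eqref{est:a}. The only mild subtlety is that Lemma~\ref{taylor} as stated covers $s = 1,\ldots,m = n-1$ for the pointwise derivatives but the $\Lip_I(\tilde a_j^{(n-1)})$ bound for $s = n-1$ is obtained directly from \eqref{assumption} and \eqref{M} as above rather than from \eqref{eq:1}, so no extra argument is needed there.
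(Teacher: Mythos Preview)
Your proof is correct and follows essentially the same route as the paper: apply Lemma~\ref{taylor} with $m=n-1$, $\al=1$, then bound $V_I(\tilde a_j)$ by a constant times $|\tilde a_k(t_0)|^{j/k}$ and extract the Lipschitz bound directly from \eqref{assumption} and \eqref{M}. The only difference is cosmetic: where you bound $V_I(\tilde a_j)$ via the factorization $u^j-v^j=(u-v)(u^{j-1}+\cdots+v^{j-1})$ together with the $L^1$-bound on $(\tilde a_j^{1/j})'$, the paper simply uses $V_I(\tilde a_j)\le 2\|\tilde a_j\|_{L^\infty(I)}\le 2(4/3)^n|\tilde a_k(t_0)|^{j/k}$ from \eqref{eq:ass12}, which is shorter and already sufficient.
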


\begin{proof}
The second estimate in \eqref{est:a} is immediate from \eqref{assumption}.
Let $t \in I$.
By Lemma \ref{taylor},    
  \begin{align*}  
    |\tilde a_j^{(s)}(t) | \le C |I|^{-s} \bigl( V_I(\tilde a_j) 
    +  V_I(\tilde a_j)^{(n-s)/n} \Lip_I(\tilde a_j^{(n-1)})^{s/n}  |I|^s\bigr).   
  \end{align*}
By \eqref{eq:ass12}, 
\[
  V_I(\tilde a_j) \le 2 \|\tilde a_j\|_{L^{\infty}(I)} \le 2\, (4/3)^n |\tilde a_k (t_0)|^{j/k} 
\]
and, by \eqref{assumption},
\[
   \max_{2 \le j\le n}  (\Lip_I(\tilde a_j^{(n-1)}))^{s/n} |\tilde a_k (t_0)|^{-js/(kn)} |I|^s 
   = |\tilde a_k (t_0)|^{-s/k} \const^s |I|^s \le  1.
\]
Thus 
\begin{align*}
  \MoveEqLeft
  V_I(\tilde a_j) +  V_I(\tilde a_j)^{(n-s)/n} \Lip_I(\tilde a_j^{(n-1)})^{s/n}  |I|^s
  \\
  &\le
   |\tilde a_k (t_0)|^{j/k} \big(C_1 + C_2  \Lip_I(\tilde a_j^{(n-1)})^{s/n} |\tilde a_k (t_0)|^{-js/(kn)} |I|^s \big) 
  \\
  &\le C_3 |\tilde a_k (t_0)|^{j/k}, 
\end{align*}
for constants $C_i$ that depend only on $n$.
So also the first estimate in \eqref{est:a} is proved.  
\end{proof}

\section{The estimates after splitting} \label{bestimates}

In this section we assume that our polynomial splits. 
We prove that the coefficients of each factor of the splitting satisfy estimates analogous to those in \eqref{est:a} 
on suitable subintervals.

\subsection{Estimates after splitting on \texorpdfstring{$I$}{I}}

Assume that the polynomial \eqref{eq:polynomial} satisfies \eqref{eq:k}--\eqref{assumpt} and the estimates \eqref{est:a}.

Additionally,
we suppose   
that the curve $\underline a$ defined in \eqref{curve} lies entirely in one of the balls $B_\rh(\underline p)$ 
from Section \ref{ssec:split} on which we have a splitting. 
Then 
$P_{\tilde a}$ splits on $I$, 
\begin{align}\label{splitting}
  P_{\tilde a}(t) = P_b(t) P_{b^*}(t), \quad t \in I. 
\end{align}
By \eqref{eq:bj} and \eqref{eq:tildebj}, the coefficients $b_i$ are of the form 
\begin{equation} \label{eq:b_i0}
  b_i = \tilde a_k^{i/k} \ps_i \big(\tilde a_k^{-2/k} \tilde a_2, \ldots, \tilde a_k^{-n/k} \tilde a_n\big), 
  \quad i = 1,\ldots, n_b,
\end{equation}
and after the Tschirnhausen transformation $P_b \leadsto P_{\tilde b}$, we get 
  \begin{equation} \label{eq:b_i}
  \tilde b_i = \tilde a_k^{i/k} \tilde \ps_i \big(\tilde a_k^{-2/k} \tilde a_2, \ldots, \tilde a_k^{-n/k} \tilde a_n\big),
  \quad i = 2,\ldots, n_b,
  \end{equation}  
where $\ps_i$ and $\tilde \ps_i$ are the analytic functions specified in Section \ref{ssec:split} and 
$n_b = \deg P_b$.

\begin{lemma} \label{lem:B}
  Assume that the polynomial \eqref{eq:polynomial} satisfies \eqref{eq:k}--\eqref{assumpt}, \eqref{est:a}, 
  and \eqref{splitting}--\eqref{eq:b_i}. 
  Then for all 
  $i=2,\ldots,n_b$ and $s = 1,\ldots,n-1$,
  \begin{align} \label{eq:b_ider}
  \begin{split}
    \|\tilde b_i^{(s)} \|_{L^\infty(I)} &\le C  |I|^{-s}  |\tilde a_k (t_0)|^{i/k}, 
      \\   
      \Lip_I(\tilde b_i^{(n-1)})  &\le C  |I|^{-n}  |\tilde a_k (t_0)|^{i/k},
  \end{split}
  \end{align}  
  where $C$ is a constant depending only on $n$ and on the functions $\tilde \ps_i$.
\end{lemma}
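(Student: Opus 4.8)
The plan is to bound the derivatives $\tilde b_i^{(s)}$ and the Lipschitz constant of $\tilde b_i^{(n-1)}$ by differentiating the explicit formula \eqref{eq:b_i}, namely $\tilde b_i = \tilde a_k^{i/k}\, \tilde \ps_i(\underline a_2, \ldots, \underline a_n)$ with $\underline a_j = \tilde a_k^{-j/k}\tilde a_j$, using the chain rule and the Leibniz rule. The two key inputs are: first, the estimates \eqref{est:a} for the intermediate derivatives of the $\tilde a_j$ (and for $\Lip_I(\tilde a_j^{(n-1)})$) coming from Lemma~\ref{bounds2}; and second, the fact, fixed in Section~\ref{ssec:split}, that $\tilde\ps_i$ together with all its partial derivatives is bounded on $B_\rh(\underline p)$, so that along the curve $\underline a$ (which by hypothesis lies in that ball) we may treat $\tilde\ps_i$ and its derivatives as $O(1)$ quantities, with constants depending only on $n$ and the functions $\tilde\ps_i$. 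The homogeneity bookkeeping is the point: a factor $\tilde a_k^{1/k}$ behaves dimensionally like $|\tilde a_k(t_0)|^{1/k}$ by \eqref{eq:ass11}, each undifferentiated $\underline a_j$ is $O(1)$ by \eqref{eq:ass12}, and by \eqref{est:a} together with \eqref{eq:ass11} each derivative $(\,\cdot\,)^{(r)}$ of $\tilde a_k^{i/k}$ or of $\underline a_j$ costs a factor $|I|^{-r}$ (and no extra power of $|\tilde a_k(t_0)|^{1/k}$ for the $\underline a_j$, since those are normalized). Hence every term of $\tilde b_i^{(s)}$ carries exactly one factor $|\tilde a_k(t_0)|^{i/k}$ and $|I|^{-s}$, giving the first line of \eqref{eq:b_ider}.

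Concretely, I would first record the scalar estimates I need on $I$: from \eqref{eq:ass11}, $|\tilde a_k(t)|^{1/k}$ is comparable to $|\tilde a_k(t_0)|^{1/k}$ up to the constants $2/3$ and $4/3$; from \eqref{est:a} and this comparison, $\|(\tilde a_k^{i/k})^{(r)}\|_{L^\infty(I)} \le C(n) |I|^{-r} |\tilde a_k(t_0)|^{i/k}$ and $\|\underline a_j^{(r)}\|_{L^\infty(I)} \le C(n) |I|^{-r}$ for $0 \le r \le n-1$, and similarly $\Lip_I((\tilde a_k^{i/k})^{(n-1)}) \le C(n)|I|^{-n}|\tilde a_k(t_0)|^{i/k}$, $\Lip_I(\underline a_j^{(n-1)}) \le C(n)|I|^{-n}$ — these last two are obtained by writing $\tilde a_k^{i/k}$ and $\underline a_j$ as compositions/quotients of the $\tilde a_j$ with a smooth function away from zero (legitimate by \eqref{eq:ass11}) and invoking the product/quotient and chain rules for $C^{n-1,1}$ functions together with \eqref{est:a}. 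Then I apply the Faà di Bruno / multivariate chain rule to $t \mapsto \tilde\ps_i(\underline a(t))$: each $s$-th derivative is a finite universal combination of products of partial derivatives of $\tilde\ps_i$ evaluated at $\underline a(t)$ (bounded, by the choice in Section~\ref{ssec:split}) times products of derivatives $\underline a_j^{(r)}$ with the orders summing to $s$; each such product is $\le C(n, \tilde\ps_i) |I|^{-s}$. Multiplying by $\tilde a_k^{i/k}$ and its derivatives via Leibniz, and collecting, yields $\|\tilde b_i^{(s)}\|_{L^\infty(I)} \le C|I|^{-s}|\tilde a_k(t_0)|^{i/k}$.

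For the Lipschitz bound on $\tilde b_i^{(n-1)}$ I would differentiate once more formally: $\tilde b_i^{(n-1)}$ is a sum of products of the quantities above, each of which is either an $L^\infty$-bounded function with the stated bounds, or factors $\underline a_j^{(r)}$/$(\tilde a_k^{i/k})^{(r)}$ with $r \le n-1$; to estimate $\Lip_I$ of a product one uses $\Lip_I(FG) \le \|F\|_{L^\infty}\Lip_I(G) + \|G\|_{L^\infty}\Lip_I(F)$, so the Lipschitz constant of $\tilde b_i^{(n-1)}$ is controlled by a sum of terms each of the form (product of $L^\infty$-bounds) times (one $\Lip_I$ of a factor of top order $n-1$, contributing $|I|^{-n}$) or times ($\Lip_I$ of $\partial^\beta\tilde\ps_i\circ\underline a$, which by the chain rule is $\le \|D\partial^\beta\tilde\ps_i\|_{L^\infty}\cdot\sum_j\|\underline a_j'\|_{L^\infty} \le C|I|^{-1}$, combined with lower-order factors). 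In every case the power counting again produces exactly $|I|^{-n}|\tilde a_k(t_0)|^{i/k}$, with constant depending only on $n$ and on $\tilde\ps_i$ (through the $L^\infty$-bounds of $\tilde\ps_i$ and its partials up to order $n$, and their Lipschitz constants, on $B_\rh(\underline p)$). The main obstacle is purely organizational rather than conceptual: keeping the homogeneity bookkeeping clean across the Leibniz and Faà di Bruno expansions — in particular verifying that the normalized coordinates $\underline a_j$ never contribute a stray power of $|\tilde a_k(t_0)|^{1/k}$ and that the single overall factor $\tilde a_k^{i/k}$ is what produces the exponent $i/k$ — and ensuring that differentiating $\tilde a_k^{-j/k}$ up to order $n-1$ with $n-1$ derivatives available on $\tilde a_k$ does not exceed the available regularity, which is exactly why the hypothesis is $C^{n-1,1}$ and why \eqref{eq:ass11} (non-vanishing of $\tilde a_k$ on $I$) is essential.
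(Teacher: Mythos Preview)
Your proposal is correct and follows essentially the same approach as the paper's own proof. The paper likewise uses Fa\`a di Bruno's formula to control $\p_t^s(\tilde a_j^r)$ and $\p_t^s(\tilde a_k^{-j/k}\tilde a_j)$, establishes $\|\p_t^s(F\circ\underline a)\|_{L^\infty(I)}\le C|I|^{-s}$ by induction on $s$ via the Leibniz rule, and then handles the Lipschitz bound on $\tilde b_i^{(n-1)}$ by the product rule $\Lip_I(f_1\cdots f_m)\le\sum_i\Lip_I(f_i)\prod_{j\ne i}\|f_j\|_{L^\infty}$, exactly as you outline.
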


\begin{proof}
  Let us prove the first estimate in \eqref{eq:b_ider}.
  Let $F$ be any $C^n$-function defined on an open set $U$ in $\C^{n-1}$ containing 
  $\underline a(I)$ and satisfying $\|F\|_{C^n(\ol U)} < \infty$. 
  We claim that, for $s = 1,\ldots,n-1$, 
  \begin{align} \label{eq:B2}
    \|\p_t^s (F \o \ul a)\|_{L^\infty(I)} &\le C  |I|^{-s}, 
  \end{align}
  where 
  $C$ is a constant depending only on $n$ and $\|F\|_{C^n(\ol U)}$. 
  For any real exponent $r$, Fa\`a di Bruno's formula implies 
  \begin{equation} \label{FaadiBruno}
    \p_t^{s} \big(\tilde a_j^{r} \big) = 
    \sum_{\ell \ge 1}^{s} \sum_{\ga \in \Ga(\ell,s)} c_{\ga,\ell,r}  
    \, \tilde a_j^{r-\ell} \tilde a_j^{(\ga_1)} \cdots \tilde a_j^{(\ga_\ell)}
  \end{equation}
  where $\Ga(\ell,s) = \{\ga \in \N_{>0}^\ell : |\ga| = s\}$ and
  \[
    c_{\ga,\ell,r}=  \frac{s!}{\ell! \ga!} r(r-1) \cdots (r-\ell+1).
  \]
  By \eqref{est:a} and \eqref{eq:ass11}, this implies
  \begin{align} \label{eq:derpower}
    \|\p_t^{s} \big(\tilde a_j^{r} \big)\|_{L^\infty(I)} 
    &\le  
    \sum_{\ell \ge 1}^{s} \sum_{\ga \in \Ga(\ell,s)} c_{\ga,\ell,r}  
    \, \|\tilde a_j^{r-\ell}\|_{L^\infty(I)} \|\tilde a_j^{(\ga_1)}\|_{L^\infty(I)} 
    \cdots \|\tilde a_j^{(\ga_\ell)}\|_{L^\infty(I)} 
    \notag \\
    &\le  
    C(n) \sum_{\ell \ge 1}^{s} \sum_{\ga \in \Ga(\ell,s)} c_{\ga,\ell,r}  
    \, |\tilde a_k(t_0)|^{(r-\ell)j/k} 
    |I|^{-s} |\tilde a_k(t_0)|^{\ell j/k}
    \notag \\
    &\le  
    C(n)  
    |I|^{-s} |\tilde a_k(t_0)|^{r j/k}.
  \end{align}
  Together with the Leibniz formula,
  \[
    \p_t^s \big(\tilde a_k^{- j/k} \tilde a_j\big) 
    = \sum_{q=0}^{s} \binom{s}{q} \tilde a_j^{(q)} \p_t^{s-q} \big(\tilde a_k^{- j/k} \big),  
  \]
  \eqref{eq:derpower} and \eqref{est:a} lead to 
  \begin{align} \label{eq:underlinea}
     \|\p_t^s \big(\tilde a_k^{- j/k} \tilde a_j\big) \|_{L^\infty(I)}  
    &\le C(n) |I|^{-s}.   
  \end{align}
  Again by the Leibniz formula,
  \begin{align*}
    \p_t (F \o \ul a) 
    &= \sum_{j=2}^n ((\p_{j-1} F)\o \ul a)\, \p_t \big(\tilde a_k^{- j/k} \tilde a_j\big),
    \\
    \p_t^s (F \o \ul a) 
    &= \sum_{j=2}^n \p_t^{s-1}\Big(((\p_{j-1} F)\o \ul a)\, \p_t \big(\tilde a_k^{- j/k} \tilde a_j\big)\Big) 
    \\
    &= \sum_{j=2}^n \sum_{p=0}^{s-1} 
    \binom{s-1}{p} \p_t^{p}((\p_{j-1} F)\o \ul a)\, \p_t^{s- p} \big(\tilde a_k^{- j/k} \tilde a_j\big). 
  \end{align*}  
  For $s=1$ we immediately get \eqref{eq:B2}. For $1< s \le n-1$, we may argue by induction on $s$. By induction hypothesis,
  \[
    \|\p_t^{p}((\p_{j-1} F)\o \ul a)\|_{L^\infty(I)} \le C(n,\|\p_{j-1} F\|_{C^{s}(\overline U)}) |I|^{-p},
  \]
  for $p = 1,\ldots,s-1$. Together with \eqref{eq:underlinea} this entails \eqref{eq:B2}.

  Now the first part of \eqref{eq:b_ider} is a consequence of \eqref{eq:b_i}, \eqref{eq:derpower} (for $j=k$ and $r = i/k$) 
  and \eqref{eq:B2} (applied to $F= \tilde \ps_i$).

For the second part of \eqref{eq:b_ider}
observe that for functions $f_1,\ldots,f_m$ on $I$ we have 
\[
  \Lip_I(f_1 f_2 \cdots f_m) \le \sum_{i=1}^m \Lip_I(f_i) \|f_1\|_{L^{\infty}(I)} \cdots \widehat{\|f_i\|_{L^{\infty}(I)}} 
  \cdots \|f_m\|_{L^{\infty}(I)}.
\]
Applying it to \eqref{FaadiBruno} and using
\begin{equation*}
  \Lip_I(\tilde a_j^{r-\ell}) \le |r-\ell| \| \tilde a_j^{r-\ell-1} \|_{L^\infty(I)} \|\tilde a_j'\|_{L^\infty} 
\end{equation*}
we find, as in the derivation of \eqref{eq:derpower},
\[
  \Lip_I(\p_t^{n-1}(\tilde a_j^r)) \le C(n,r) |I|^{-n} |\tilde a_k(t_0)|^{rj/k}. 
\]
As above this leads to 
\begin{align*}
     \Lip_I(\p_t^{n-1} \big(\tilde a_k^{- j/k} \tilde a_j\big) )  
    &\le C(n) |I|^{-n}.   
  \end{align*}
and
\[
  \Lip_I (\p_t^{n-1}(F \o \ul a)) \le C(n, \|F\|_{C^n(\overline U)})  |I|^{-n},
\]
and finally to the second part of \eqref{eq:b_ider}.
\end{proof}

\begin{remark} \label{rem:b1prime}
  In the setup of Lemma \ref{lem:B} the same estimates hold for $\tilde b_i$ replaced by $b_i$. This follows by the 
  same proof where ones uses \eqref{eq:b_i0} instead of \eqref{eq:b_i}. We shall only need the special case $i=s=1$ which we 
  state explicitly for later reference:
  \begin{equation} \label{b1prime}
      \|b_1' \|_{L^\infty(I)} \le C  |I|^{-1}  |\tilde a_k (t_0)|^{1/k}.   
  \end{equation}    
\end{remark}

\begin{lemma} \label{lem:Lpbak}
  Assume that $\tilde b_i$, $i = 2,\ldots, m$, are $C^{n-1,1}$-functions, 
  where $m\le n$, on an open bounded interval $I$ which satisfy
  \eqref{eq:b_ider} for all $s=1,\ldots,n-1$. Then, for all $1 \le p < m'$,
  \begin{equation} \label{eq:Lpbak}
     \|(\tilde b_i^{1/i})'\|^*_{L^p(I)} \le C |I|^{-1} |\tilde a_k(t_0)|^{1/k},
   \end{equation} 
   for a constant $C$ which depends only on $n$, $p$, and the constant in \eqref{eq:b_ider}.
\end{lemma}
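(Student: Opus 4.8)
The goal is to bound the normalized $L^p$-norm of $(\tilde b_i^{1/i})'$ on $I$ for each $i = 2,\ldots,m$ and each $1 \le p < m'$. The natural strategy is to combine the pointwise radical inequality of Corollary~\ref{cor:radicals} (in the form of Proposition~\ref{prop:radicals}) — which controls $\|(\tilde b_i^{1/i})'\|_{i',w,I}$, hence $\|(\tilde b_i^{1/i})'\|_{i',w,I}$ — with the coefficient estimates \eqref{eq:b_ider}. Since $\tilde b_i \in C^{n-1,1}(\oI)$ and $i \le m \le n$, we actually have $\tilde b_i \in C^{i-1,1}(\oI)$, so Corollary~\ref{cor:radicals} applies with exponent $i$ and gives $(\tilde b_i^{1/i})' \in L^{i'}_w(I)$ together with
\[
  \|(\tilde b_i^{1/i})'\|_{i',w,I} \le C(n) \max\Big\{ \big(\Lip_I(\tilde b_i^{(i-1)})\big)^{1/i} |I|^{1/i'},\ \|\tilde b_i'\|_{L^\infty(I)}^{1/i} \Big\}.
\]

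\textbf{Main steps.} First I would insert the bounds from \eqref{eq:b_ider}: taking $s=1$ gives $\|\tilde b_i'\|_{L^\infty(I)} \le C|I|^{-1}|\tilde a_k(t_0)|^{i/k}$, so $\|\tilde b_i'\|_{L^\infty(I)}^{1/i} \le C |I|^{-1/i}|\tilde a_k(t_0)|^{1/k}$. For the other term I need a Lipschitz bound on $\tilde b_i^{(i-1)}$; when $i < n$ this is not literally one of the estimates in \eqref{eq:b_ider}, but it follows from the sup-norm bound on $\tilde b_i^{(i)}$ in \eqref{eq:b_ider} (with $s=i \le n-1$), giving $\Lip_I(\tilde b_i^{(i-1)}) \le \|\tilde b_i^{(i)}\|_{L^\infty(I)} \le C|I|^{-i}|\tilde a_k(t_0)|^{i/k}$; when $i=n$ (possible only if $m=n$) one uses the second line of \eqref{eq:b_ider} directly. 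Either way $\big(\Lip_I(\tilde b_i^{(i-1)})\big)^{1/i}|I|^{1/i'} \le C|I|^{-1}|\tilde a_k(t_0)|^{1/k}|I|^{1/i'} = C|I|^{-1/i}|\tilde a_k(t_0)|^{1/k}$. Hence $\|(\tilde b_i^{1/i})'\|_{i',w,I} \le C|I|^{-1/i}|\tilde a_k(t_0)|^{1/k}$, i.e. $\|(\tilde b_i^{1/i})'\|^*_{i',w,I} = |I|^{1/i'}\|(\tilde b_i^{1/i})'\|_{i',w,I}$... more carefully, $\|(\tilde b_i^{1/i})'\|^*_{i',w,I} = |I|^{-1/i'}\|(\tilde b_i^{1/i})'\|_{i',w,I} \le C|I|^{-1/i'-1/i}|\tilde a_k(t_0)|^{1/k} = C|I|^{-1}|\tilde a_k(t_0)|^{1/k}$, using $1/i+1/i'=1$. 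Finally, to pass from the $L^{i'}_w$-quasinorm to the $L^p$-norm for $p < i'$ (which is implied by $p < m' \le i'$), I apply the normalized inclusion \eqref{inclusions}: $\|(\tilde b_i^{1/i})'\|^*_{L^p(I)} \le (i'/(i'-p))^{1/p}\|(\tilde b_i^{1/i})'\|^*_{i',w,I} \le C|I|^{-1}|\tilde a_k(t_0)|^{1/k}$, with $C$ depending only on $n$, $p$, and the constant in \eqref{eq:b_ider}. Summing or taking the max over $i=2,\ldots,m$ absorbs the dependence on $m \le n$ into the constant.

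\textbf{Expected obstacle.} The only slightly delicate point is the bookkeeping: making sure the Lipschitz constant of the $(i-1)$-st derivative is legitimately controlled when $i<n$ (so that one uses the sup-bound on the next derivative rather than a nonexistent entry of \eqref{eq:b_ider}), and keeping the powers of $|I|$ straight through the normalizations $\|\cdot\|^*$ and the quasinorm-to-norm passage — in particular the identity $1/i + 1/i' = 1$ is what makes the $|I|$-powers collapse exactly to $|I|^{-1}$. There is no genuine difficulty beyond careful exponent arithmetic, since all the analytic content is already contained in Corollary~\ref{cor:radicals} and Lemma~\ref{lem:B}.
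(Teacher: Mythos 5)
Your proof is correct and follows the same route as the paper: apply Corollary~\ref{cor:radicals} with exponent $i$, feed in the bounds from \eqref{eq:b_ider} (using the $s=i$ sup-norm bound to control $\Lip_I(\tilde b_i^{(i-1)})$ when $i<n$, and the explicit Lipschitz bound when $i=n$), obtain $\|(\tilde b_i^{1/i})'\|^*_{i',w,I} \le C|I|^{-1}|\tilde a_k(t_0)|^{1/k}$, and then pass to $L^p$ for $p<m'\le i'$ via \eqref{inclusions}. The paper states the same argument more tersely, implicitly absorbing the case distinction on how $\Lip_I(\tilde b_i^{(i-1)})$ is bounded; your explicit treatment of that bookkeeping is a welcome clarification rather than a deviation.
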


\begin{proof}
  By \eqref{est} and \eqref{eq:b_ider}, 
  \begin{align*}
    \| (\tilde b_i^{1/i})'\|_{i',w,I} &\le C(i) \max   \Big\{\big(\on{Lip}_{I}(\tilde b_i^{(i-1)})\big)^{1/i}|I|^{1/i'}, 
    \|\tilde b_i'\|_{L^\infty(I)}^{1/i}\Big\} \\
    &\le C  |I|^{-1 +1/i'} |\tilde a_k (t_0)|^{1/k},
\end{align*}
or equivalently,
\begin{equation*}
  \|(\tilde b_{i}^{1/i})'\|^*_{i',w,I} \le C  |I|^{-1} |\tilde a_k (t_0)|^{1/k}.
\end{equation*} 
In view of \eqref{inclusions}, this entails \eqref{eq:Lpbak}.
\end{proof}

\subsection{Special subintervals of \texorpdfstring{$I$}{I} and estimates on them} \label{subintervals}

Assume that the polynomial \eqref{eq:polynomial} satisfies \eqref{eq:k}--\eqref{assumpt}, \eqref{est:a}, 
and \eqref{splitting}--\eqref{eq:b_i}.
Suppose that $t_1 \in I$ and $\ell \in \{2,\ldots,n_b\}$ are such that  
\begin{equation} \label{eq:ell}
  |\tilde b_\ell(t_1)|^{1/\ell} = \max_{2 \le i \le n_b} |\tilde b_i(t_1)|^{1 /i} \ne 0.
\end{equation}
By \eqref{eq:ass12} and \eqref{eq:b_i}, for all $t \in I$ and $i = 2,\ldots,n_b$,
      \begin{equation} \label{eq:b_ibound}
|\tilde b_i(t) | \le C_1  |\tilde a_k (t_0)|^{i/k},   
  \end{equation}
where the constant $C_1$ depends only on the functions $\tilde \ps_i$.
Thanks to \eqref{eq:b_ibound} we can choose  
a constant $D< 1/3$ and an open interval $J$ with $t_1 \in J \subseteq I$ such that 
\begin{align} \label{assumption2a}
  | J|  |I|^{-1}  {|\tilde a_k(t_0)|^{1/k}}  
  + \sum_{i=2}^{n_b} \|(\tilde b _i^{1/i})'\|_{L^1 (J)} =  D |\tilde b_\ell(t_1)|^{1/\ell} .
\end{align}
It suffices to take $D < C_1^{-1}$ where $C_1$ is the constant in \eqref{eq:b_ibound}; 
note that $\tilde b _i^{1/i}$ is absolutely continuous 
by Corollary \ref{cor:radicals}. 

\begin{remark}
  The identity \eqref{assumption2a} will be crucial for the proof of Theorem \ref{main}. 
\end{remark}

We will now see that on the interval $J$ the estimates of Section \ref{aestimates} hold for $\tilde b_i$ 
instead of $\tilde a_j$.

\begin{lemma} \label{lemB}
  Assume that the polynomial \eqref{eq:polynomial} satisfies \eqref{eq:k}--\eqref{assumpt}, \eqref{est:a}, 
  \eqref{splitting}--\eqref{eq:b_i}, and \eqref{eq:ell}. 
  Let $D$ and $J$ be as in \eqref{assumption2a}. 
  Then the functions $\tilde b_i$ on $J$ satisfy the conclusions of Lemmas \ref{lem1}, \ref{lem2}, and \ref{bounds2}. 
  More precisely, 
  for all $t \in J$ and $i = 2,\ldots,n_b$,
  \begin{gather}  
    |\tilde b_i^{1/i} (t) - \tilde b_i^{1/i} (t_1)| \le  D |\tilde b_\ell (t_1)|^{1/\ell}, \label{b1} \\ 
    \frac 2 3 <     1-D \le \Big | \frac{\tilde b_\ell(t)}{\tilde b_\ell(t_1)} \Big |^{1/\ell}   \le 1+D < \frac 4 3,  
    \label{b2}\\
    |\tilde b_i(t)|^{1/i}\le \frac 4 3 |\tilde b_\ell(t_1)|^{1/\ell} \le 2 |\tilde b_\ell(t)|^{1/\ell}. \label{b3}
  \end{gather}  
  The length of the curve 
  \begin{equation}
    J \ni t \mapsto \ul b(t) 
    := (\tilde b_\ell^{-2/\ell} \tilde b_2, \ldots, \tilde b_\ell^{-n_b/\ell} \tilde b_{n_b})(t)
  \end{equation}
  is bounded by $3 n_b^2\, 2^{n_b} D$.  
  For all $ i = 2,\ldots,n_b$ and $ s = 1,\ldots,n-1$,  
    \begin{align}\label{b4}
    \begin{split}
      \|\tilde b_i^{(s)} \|_{L^\infty(J)} &\le C |J|^{-s}  |\tilde b_\ell (t_1)|^{i/\ell},
      \\ 
      \Lip_J(\tilde b_i^{(n-1)})  &\le C |J|^{-n}  |\tilde b_\ell (t_1)|^{i/\ell},    
    \end{split}    
  \end{align}
  for a universal constant $C$ depending only on $n$ and $\tilde \ps_i$.
\end{lemma}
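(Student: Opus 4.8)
The plan is to reduce the assertions about $\tilde b_i$ on $J$ to the assertions about $\tilde a_j$ on $I$ that have already been established, by verifying that the polynomial $P_{\tilde b}$ on the subinterval $J$ satisfies hypotheses completely analogous to \eqref{eq:k}--\eqref{assumpt}, \eqref{est:a}, with $(\tilde a_j, I, t_0, k)$ replaced by $(\tilde b_i, J, t_1, \ell)$. Once this is done, \eqref{b1}, \eqref{b2}, \eqref{b3} follow verbatim from Lemma \ref{lem1}, the length bound follows from Lemma \ref{lem2}, and \eqref{b4} follows from Lemma \ref{bounds2}.

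First I would check the analogue of the domination condition \eqref{eq:k}: this is precisely \eqref{eq:ell}, which is given. Next I would verify the analogue of the smallness assumption \eqref{assumpt}: one needs $\sum_{i=2}^{n_b}\|(\tilde b_i^{1/i})'\|_{L^1(J)} \le D|\tilde b_\ell(t_1)|^{1/\ell}$ with $D<1/3$; but this is immediate from the defining identity \eqref{assumption2a}, since that identity says the left-hand side is at most $D|\tilde b_\ell(t_1)|^{1/\ell}$ (the extra nonnegative term $|J||I|^{-1}|\tilde a_k(t_0)|^{1/k}$ only makes the sum of the two terms equal to $D|\tilde b_\ell(t_1)|^{1/\ell}$, so each piece is bounded by it), and we have chosen $D<1/3$. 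This already yields Lemma \ref{lem1} applied to $P_{\tilde b}$ on $J$, giving \eqref{b1}--\eqref{b3}, and Lemma \ref{lem2}, giving the curve-length bound; here I note that $\deg P_{\tilde b} = n_b \le n$, so the constants are controlled in terms of $n$.

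For \eqref{b4} I would invoke Lemma \ref{bounds2} applied to $P_{\tilde b}$ on $J$, which requires the stronger assumption \eqref{assumption} for the pair $(\tilde b_i, J)$, namely $M_b|J| + \sum_{i=2}^{n_b}\|(\tilde b_i^{1/i})'\|_{L^1(J)} \le D|\tilde b_\ell(t_1)|^{1/\ell}$ where $M_b = \max_{2\le i\le n_b}(\Lip_J(\tilde b_i^{(n-1)}))^{1/n}|\tilde b_\ell(t_1)|^{(n-i)/(\ell n)}$. The term $M_b|J|$ must be absorbed into the first summand $|J||I|^{-1}|\tilde a_k(t_0)|^{1/k}$ of \eqref{assumption2a}; so the point is to show $M_b \le C|I|^{-1}|\tilde a_k(t_0)|^{1/k}$ with a constant depending only on $n$ (and the $\tilde\psi_i$), possibly after shrinking $D$ by this factor. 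This is exactly where the global estimates \eqref{eq:b_ider} from Lemma \ref{lem:B} enter: they give $\Lip_I(\tilde b_i^{(n-1)}) \le C|I|^{-n}|\tilde a_k(t_0)|^{i/k}$, hence $\Lip_J(\tilde b_i^{(n-1)})\le\Lip_I(\tilde b_i^{(n-1)})\le C|I|^{-n}|\tilde a_k(t_0)|^{i/k}$, and combined with \eqref{eq:b_ibound} (which controls $|\tilde b_\ell(t_1)|^{1/\ell}$ below by a multiple of $|\tilde a_k(t_0)|^{1/k}$ via \eqref{b3} once \eqref{b1}--\eqref{b3} are known, or directly) one bounds each term $(\Lip_J(\tilde b_i^{(n-1)}))^{1/n}|\tilde b_\ell(t_1)|^{(n-i)/(\ell n)}$ by $C|I|^{-1}|\tilde a_k(t_0)|^{1/k}$. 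Thus after replacing $D$ by a smaller universal constant (still $<1/3$, still satisfying $D<C_1^{-1}$ so that the interval $J$ exists), \eqref{assumption} holds for $(\tilde b_i, J, t_1, \ell)$ and Lemma \ref{bounds2} yields \eqref{b4} with constants depending only on $n$ and the $\tilde\psi_i$.

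The main obstacle I anticipate is the bookkeeping in the last step: one must make sure that the auxiliary constant $D$ can be chosen small enough to simultaneously (i) permit the choice of $J$ satisfying \eqref{assumption2a}, i.e. $D<C_1^{-1}$, (ii) satisfy $D<1/3$ so Lemma \ref{lem1} and Lemma \ref{lem2} apply, and (iii) absorb the Lipschitz term $M_b|J|$ into the first summand of \eqref{assumption2a} so that \eqref{assumption} holds — all with a single universal $D=D(n)$ depending only on $n$ and the finitely many functions $\tilde\psi_i$ (of which there are finitely many by the universal splitting construction of Section \ref{universal}). Everything else is a routine transcription of Lemmas \ref{lem1}, \ref{lem2}, \ref{bounds2}, with $n_b\le n$ ensuring the constants remain under control.
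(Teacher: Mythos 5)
Your proof of \eqref{b1}--\eqref{b3} and of the curve-length bound is exactly the paper's argument: substitute $(\tilde b_i, J, t_1, \ell)$ for $(\tilde a_j, I, t_0, k)$, note that \eqref{eq:ell} is the analogue of \eqref{eq:k}, and that \eqref{assumption2a} implies the analogue of \eqref{assumpt} because the nonnegative extra term $|J||I|^{-1}|\tilde a_k(t_0)|^{1/k}$ can be dropped; then Lemmas \ref{lem1} and \ref{lem2} transcribe verbatim (with $n_b \le n$ keeping constants controlled).

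For \eqref{b4}, you take a genuinely different route from the paper. You run the analogue of Lemma \ref{bounds2} for $\tilde b_i$ on $J$, i.e.\ you apply Lemma \ref{taylor} with Taylor order $m=n-1$, $\alpha=1$, so that a single application handles all $s=1,\ldots,n-1$; the extra ingredient you need is a bound on $\Lip_J(\tilde b_i^{(n-1)})$ of the right weight, which you correctly pull from \eqref{eq:b_ider} together with the identity \eqref{assumption2a} and the bound \eqref{eq:b_ibound}. The paper instead does something more economical: for $s\le i$ it applies Lemma \ref{taylor} with the lower order $m=i-1$, $\alpha=1$, so that only the bound $\|\tilde b_i^{(i)}\|_{L^\infty(I)} \le C|I|^{-i}|\tilde a_k(t_0)|^{i/k}$ from \eqref{eq:b_ider} is needed and the extra Taylor term is absorbed directly by raising \eqref{assumption2a} to the $s$th power; for $s>i$ (including the Lipschitz term at order $n-1$) it uses no Taylor expansion at all, only the monotonicity $(|J||I|^{-1})^s\le(|J||I|^{-1})^i$ and \eqref{eq:b_ider}. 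The paper's version thereby avoids the question you flag about absorbing the $M_b|J|$ term. Your version does work, but I would sharpen one point: you do not actually need to shrink $D$. Shrinking $D$ is delicate here because $J$ is \emph{determined} by $D$ via \eqref{assumption2a}, so shrinking $D$ changes $J$ and the lemma as stated takes $D$ and $J$ as given. The cleaner fix is to note that $M_b|J|\le C\,|J||I|^{-1}|\tilde a_k(t_0)|^{1/k}\le CD\,|\tilde b_\ell(t_1)|^{1/\ell}$ with $C=C(n,\tilde\psi_i)$, so the ratio $M_b|J|/|\tilde b_\ell(t_1)|^{1/\ell}$ is bounded by a universal constant; the proof of Lemma \ref{bounds2} only requires this ratio to be bounded (its hypothesis $B<1/3$ is used there only to make the ratio $\le 1$, and a larger bound simply inflates the final constant in \eqref{b4}), so you can keep $D$ as given and let the constant $C(n,\tilde\psi_i)$ in \eqref{b4} absorb the factor. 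With that modification your route is fully correct; it is a slightly heavier appeal to the machinery (needing the full $(n-1)$-derivative bound from Lemma \ref{lem:B} uniformly over $s$, rather than only at order $i$) but conceptually more uniform.
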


\begin{proof}
  The proof of \eqref{b1}--\eqref{b3} is analogous to the proof of Lemma \ref{lem1}; use \eqref{eq:ell} and \eqref{assumption2a} 
  instead of \eqref{eq:k} and \eqref{assumpt}. The bound for the length of the curve $J \ni t \mapsto \ul b(t)$ (which is 
  well-defined by \eqref{b2}) 
  follows from \eqref{assumption2a} and \eqref{b1}--\eqref{b3}; see the proof of Lemma \ref{lem2}. 

  Let us prove \eqref{b4}.
  By \eqref{eq:b_ider}, for $t\in I$ and $i = 2,\ldots,n_b$ (note that $n_b < n$),
  \begin{equation} \label{eq:b_iderj} 
    |\tilde b_i^{(i)} (t) | \le C  |I|^{-i} |\tilde a_k(t_0)|^{i/k}, 
  \end{equation}
  where $C = C(n,\tilde \ps_i)$.
  Thus, for $t\in J$ and $s=1,\ldots, i$,
  \begin{align*}
  |\tilde b_i^{(s)}(t) | 
    &\le C | J |^{-s} \bigl (  V_J(\tilde b_i)  
    + V_J(\tilde b_i)^{(i-s)/i} \|\tilde b_i^{(i)}\|^{s/i}_{L^\infty(J)} |J|^s \bigr) 
    \hspace{21mm} \text{by Lemma \ref{taylor}}\\
    &\le C_1 | J |^{-s} \Bigl (  |\tilde b_\ell(t_1)|^{i/\ell}  
    + |\tilde b_\ell(t_1)|^{(i-s)/\ell} |J|^s |I|^{-s} |\tilde a_k(t_0)|^{s/k} \Bigr) 
    \hspace{5mm} \text{by \eqref{b3} and \eqref{eq:b_iderj}} \\  
  &\le C_2  | J |^{-s}  |\tilde b_\ell(t_1)|^{i/\ell}  \hspace{69mm} \text{by \eqref{assumption2a}}, 
  \end{align*}
  for constants $C = C(i)$ and $C_h = C_h(n,\tilde \ps_i)$.
  For $s>i$ (including $s = n$), we have $(|J||I|^{-1})^s \le (|J||I|^{-1})^i$ and thus
  \begin{align*}
      | I |^{-s}  |\tilde a_k(t_0)|^{i/k}          
    \le    | J |^{-s} \big(|J| |I|^{-1} |\tilde a_k(t_0)|^{1/k} \big)^i  
    \le     | J |^{-s}  |\tilde b_\ell(t_1)|^{i/ \ell},  
  \end{align*}
  where the second inequality follows from \eqref{assumption2a}. Hence \eqref{eq:b_ider} implies \eqref{b4}.   
\end{proof}

\section{A special cover by intervals} \label{specialcover}

In this section we prove a technical result which will allow us to 
glue local $L^p$-estimates to global ones in the proof of Theorem \ref{main}.

\subsection{Intervals of first and second kind} \label{intervals}

Let $I \subseteq \R$ be a bounded open interval.
Let $\tilde b_i \in C^{n_b-1,1}(\overline I)$, $i =2,\ldots,n_b$.
For each point $t_1$ in  
\[
  I' := I \setminus \{t \in I : \tilde b_2 (t) = \cdots = \tilde b_{n_b}(t) = 0\}
\]
there exists $\ell \in \{2, \ldots, n_b\}$ such that \eqref{eq:ell}.
Assume that there are positive constants $D < 1/3$ and $L$ such that 
for all $t_1 \in I'$
there is 
an open interval $J=J(t_1)$ with $t_1 \in J \subseteq I$ such that 
\begin{align} \label{assumption2aG}
  L | J|    
  + \sum_{i=2}^{n_b} \|(\tilde b _i^{1/i})'\|_{L^1 (J)} =  D |\tilde b_\ell(t_1)|^{1/\ell} .
\end{align}
Note that \eqref{eq:ell} and \eqref{assumption2aG} imply \eqref{b2} (cf.\ the proof of Lemma \ref{lemB}); 
in particular, we have $J \subseteq I'$.

Let us consider the functions 
\begin{align*}
  \vh_{t_1,+}(s) &:= L (s-t_1)   
  + \sum_{i=2}^{n_b} \|(\tilde b _i^{1/i})'\|_{L^1 ([t_1,s))}, \quad s\ge t_1, \\
  \vh_{t_1,-}(s) &:= L (t_1-s)   
  + \sum_{i=2}^{n_b} \|(\tilde b _i^{1/i})'\|_{L^1 ((s,t_1])}, \quad s\le t_1. 
\end{align*}
Then $\vh_{t_1,\pm} \ge 0$ are monotonic continuous functions defined for small $\pm (s - t_1)\ge 0$ 
and satisfying $\vh_{t_1,\pm}(t_1) = 0$. 
We let $\vh_{t_1,\pm}$ grow until 
$\vh_{t_1,-}(s_-) + \vh_{t_1,+}(s_+) = D |\tilde b_\ell(t_1)|^{1/\ell}$, that is \eqref{assumption2aG} with $J=(s_-,s_+)$. 
And we do this 
\emph{symmetrically} whenever possible:  
\begin{enumerate}
   \item[(i)] We say that the interval $J=(s_-,s_+)$ is of \emph{first kind} if 
   \begin{equation} \label{1kind}
     \vh_{t_1,-}(s_-) = \vh_{t_1,+}(s_+) = \frac D 2|\tilde b_\ell(t_1)|^{1/\ell}.
   \end{equation}
   \item[(ii)] If \eqref{1kind} is not possible, i.e., 
   we reach the boundary of the interval $I$ before either $\vh_{t_1,-}$ or $\vh_{t_1,+}$ has 
   grown to the value $(D/2)|\tilde b_\ell(t_1)|^{1/\ell}$, then we say that $J=(s_-,s_+)$ is of \emph{second kind}.  
\end{enumerate} 

\begin{remark}
	We may always assume that the interval $J(t_1)$ if of first kind, if such a choice for $t_1$ exists.
\end{remark}

\subsection{A special subcover} 

The goal of this section is to prove the following proposition.

\begin{proposition} \label{cover}
  Let $I \subseteq \R$ be a bounded open interval.
  Let $\tilde b_i \in C^{n_b-1,1}(\overline I)$, $i =2,\ldots,n_b$.
  For each point $t_1$ in $I'$ fix $\ell \in \{2, \ldots, n_b\}$ such that \eqref{eq:ell}.
  Let $\{J(t_1)\}_{t_1\in I'}$ be a collection of open intervals $J=J(t_1)$ with $t_1 \in J \subseteq I$ such that:
  \begin{enumerate}
    \item There are positive constants $D < 1/3$ and $L$ such that 
      for all $t_1 \in I'$ we have \eqref{assumption2aG} for $J=J(t_1)$.
    \item The interval $J(t_1)$ is of first kind, i.e., \eqref{1kind} holds, if such a choice for $t_1$ exists.
  \end{enumerate}
Then the collection $\{J(t_1)\}_{t_1\in I'}$ has a countable subcollection $\cJ$ that still 
  covers $I'$ and such that every point in $I'$ belongs to at most two intervals in $\cJ$. 
  In particular, 
  \[
    \sum_{J \in \cJ} |J| \le 2 |I'|.
  \]
\end{proposition}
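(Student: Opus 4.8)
The plan is to extract the subcover by a Vitali-type / greedy covering argument adapted to the one-dimensional situation, exploiting the ``symmetry'' built into intervals of first kind. First I would note that it suffices to produce, for the family $\{J(t_1)\}_{t_1 \in I'}$, a countable subfamily covering $I'$ whose overlap multiplicity is at most two; the final bound $\sum_{J \in \cJ}|J| \le 2|I'|$ is then immediate by integrating the indicator functions, since $\sum_{J \in \cJ} \mathbf 1_J \le 2 \cdot \mathbf 1_{I'}$ a.e.\ (recall each $J \subseteq I'$, as observed right after \eqref{assumption2aG}). So the whole content is the bounded-overlap selection.

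The key mechanism I would use is the following dichotomy for two intervals $J(t_1) = (s_-,s_+)$ and $J(t_1')=(s_-',s_+')$ with, say, $t_1 < t_1'$. Suppose $t_1' \in J(t_1)$, i.e.\ $t_1' < s_+$. Using \eqref{b2} (which holds on $J(t_1)$ by hypothesis (1)) we get $|\tilde b_\ell(t_1')|^{1/\ell} \asymp |\tilde b_\ell(t_1)|^{1/\ell}$ up to the factors $1\pm D$ — more precisely, whatever index $\ell'$ is dominant at $t_1'$, combining \eqref{eq:ell} at both points with \eqref{b3} on $J(t_1)$ shows $|\tilde b_{\ell'}(t_1')|^{1/\ell'}$ is comparable to $|\tilde b_\ell(t_1)|^{1/\ell}$ within universal constants depending only on $D$. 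Now if $J(t_1)$ is of first kind, then $\vh_{t_1,+}(s_+) = \tfrac D2 |\tilde b_\ell(t_1)|^{1/\ell}$, and the ``mass'' $\vh$ accumulated between $t_1'$ and $s_+$ is at least $\vh_{t_1,+}(s_+) - \vh_{t_1,+}(t_1')$. The idea is that once $t_1'$ lies deep enough inside $J(t_1)$, the interval $J(t_1')$ cannot reach much further right than $s_+$, because the defining equation \eqref{assumption2aG} for $J(t_1')$ caps its total $\vh$-mass at $D|\tilde b_{\ell'}(t_1')|^{1/\ell'}$, which is comparable to $D|\tilde b_\ell(t_1)|^{1/\ell}$, i.e.\ at most a universal multiple of the half-mass defining $J(t_1)$. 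This is what prevents infinitely many intervals from piling up over a single point.

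Concretely I would run the standard greedy/transfinite selection: order things so as to repeatedly pick an interval of (essentially) maximal length among those not yet covered. The precise claim to prove is: if $x \in I'$ lies in three selected intervals $J(t_1), J(t_2), J(t_3)$ (with $t_1 < t_2 < t_3$), then one of them is contained in the union of the other two, or more usefully one of them was redundant at selection time, contradicting the greedy choice. To make this work I would first establish the quantitative ``no deep nesting'' lemma: there is a universal $\theta = \theta(D) \in (0,1)$ such that if $t_1 < t_1'$ and $t_1' - s_-(t_1) \ge \theta \,|J(t_1)|$ — i.e.\ $t_1'$ is in the right portion of $J(t_1)$ — then $s_+(t_1') \le s_+(t_1)$ cannot hold while $J(t_1')$ still properly pokes out; rather $J(t_1') \subseteq$ a controlled enlargement, forcing containment $J(t_1) \cup J(t_1') $ to be covered by two of any three candidates. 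The symmetry of first-kind intervals (equal $\vh$-mass on each side) is exactly what lets me convert ``$t_1'$ is in the right half of $J(t_1)$'' into ``$t_1$ is correspondingly far from $s_-(t_1')$'', making the two-sided bookkeeping close up. Intervals of second kind touch the boundary of $I$ and there are at most two of them contributing near each end, so they are handled separately and absorbed into the constant $2$ (indeed one checks a single interval of each kind per endpoint suffices, or they can simply be kept and shown not to overlap a third interval). Finally, countability of $\cJ$ follows automatically: a collection of intervals in $\R$ with bounded overlap multiplicity is necessarily countable (each contains a rational).

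The main obstacle, I expect, is the quantitative nesting lemma — pinning down that the comparison constant coming from \eqref{b2}--\eqref{b3} together with the first-kind symmetry genuinely caps the overlap at \emph{two} and not merely at some larger universal number. The naive greedy argument gives bounded overlap easily; getting the sharp constant $2$ (which is what makes $\sum|J| \le 2|I'|$ clean) is where the first-kind normalization \eqref{1kind} must be used in full force, balancing the left and right excursions so that a third interval through a common point would violate maximality. Everything else is routine one-dimensional covering combinatorics.
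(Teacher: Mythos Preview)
Your proposal leaves the decisive step unproven, and the approach differs substantially from the paper's. The paper does \emph{not} run a greedy/Vitali selection. Instead it treats each connected component $(\alpha,\beta)$ of $I'$ separately and \emph{constructs} a chain of intervals: pick any $J_0$, then recursively set $J_\gamma := J(\beta_{\gamma-1})$ for $\gamma \ge 1$ and $J_\gamma := J(\alpha_{\gamma+1})$ for $\gamma \le -1$, i.e.\ each new interval is the one centered at an endpoint of its predecessor. The key fact is a clean non-nesting lemma (Lemma~\ref{lem:interlace}): if $t_1 \notin J$ and $J(t_1)$ is of first kind, then $J \not\subseteq J(t_1)$. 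Its proof is three lines from \eqref{b2}--\eqref{b3} and the half-mass identity \eqref{1kind}, and it forces the chained endpoints to be strictly monotone, $\alpha_\gamma < \alpha_{\gamma+1}$ and $\beta_\gamma < \beta_{\gamma+1}$. A separate elementary combinatorial step (Lemma~\ref{glue}) then thins this monotone chain to overlap at most $2$. The three cases --- $\tilde b$ vanishing at both, one, or neither endpoint of $(\alpha,\beta)$ --- are handled individually, with second-kind intervals anchoring the finite end(s).

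The gap in your proposal is exactly the one you flag: your ``quantitative nesting lemma'' is never formulated precisely, and the claim that three greedily-selected intervals cannot share a point is not argued. Vitali-type selections naturally produce overlap bounds like $3$ or $5$, not $2$; you assert that the first-kind symmetry closes this gap but do not show how. Even your remark that ``the naive greedy argument gives bounded overlap easily'' is not justified: the intervals are calibrated by $\varphi$-mass rather than length, and while the budgets $D|\tilde b_\ell(t_1)|^{1/\ell}$ at nearby centers are comparable via \eqref{b2}--\eqref{b3}, turning this into an engulfing property for a maximal-length selection needs an argument you have not supplied. The paper's chaining construction sidesteps all of this: by centering each successive interval at the boundary of the previous one, the half-mass identity \eqref{1kind} is invoked directly, and the overlap bound $2$ comes from order combinatorics rather than a quantitative comparison of masses.
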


\begin{remark}
   It is essential for us that $\cJ$ is a subcollection and not a refinement; by shrinking the intervals we would 
   lose equality in \eqref{assumption2aG}. 
   We will need this proposition for glueing local $L^p$-estimates to global ones.
\end{remark}

We can treat the connected components of $I'$ separately. 
So let $(\al,\be)$ be any connected component of $I'$ and let $\cI := \{J(t_1)\}_{t_1 \in (\al,\be)}$. 
The function $\tilde b := (\tilde b_2,\ldots,\tilde b_{n_b})$ may or may not vanish at the endpoints of $(\al,\be)$.
We distinguish 
three cases:
\begin{enumerate}
  \item[(i)] $\tilde b$ vanishes at both endpoints,
  \begin{equation}
     \label{1case}
     \tilde b(\al) = \tilde b(\be) = 0. 
  \end{equation}
  \item[(ii)] $\tilde b$ vanishes at one endpoint, say $\al$, but not at the other, 
  \begin{equation}
     \label{2case}
     \tilde b(\al) =0, ~ \tilde b(\be) \ne 0.
   \end{equation} 
   \item[(iii)] $\tilde b$ does not vanishes at either endpoint,
   \begin{equation}
    \label{3case}
    \tilde b(\al) \ne 0, ~ \tilde b(\be) \ne 0. 
   \end{equation}
\end{enumerate}

We shall need the following two lemmas.

\begin{lemma} \label{endpoints} 
We have:
\begin{enumerate}
  \item If $\tilde b(\al) = 0$, then no interval $J \in \cI$ has left endpoint $\al$ and $|J(t_1)| \to 0$ as $t_1 \to \al$. 
  If $\tilde b(\be) = 0$, then no interval $J \in \cI$ has right endpoint $\be$ and $|J(t_1)| \to 0$ as $t_1 \to \be$. 
  \item If $\tilde b(\al) \ne 0$, then there exists an interval $J \in \cI$ of second kind (with endpoint $\al$). 
  If $\tilde b(\be) \ne 0$, then there exists an interval $J \in \cI$ of second kind (with endpoint $\be$).
\end{enumerate}
\end{lemma}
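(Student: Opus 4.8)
The plan is to prove the two parts of Lemma~\ref{endpoints} by tracking how the ``control quantity'' $|\tilde b_\ell(t_1)|^{1/\ell}$ and the auxiliary increments $\vh_{t_1,\pm}$ behave as $t_1$ approaches an endpoint of the component $(\al,\be)$.

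\textbf{Part (1): the vanishing endpoint.} Suppose $\tilde b(\al)=0$. The function $\tilde b$ is continuous, so $\max_{2\le i\le n_b}|\tilde b_i(t_1)|^{1/i}\to 0$ as $t_1\to\al^+$; in particular $|\tilde b_\ell(t_1)|^{1/\ell}\to 0$, hence the right-hand side $D|\tilde b_\ell(t_1)|^{1/\ell}$ of \eqref{assumption2aG} tends to $0$. On the other hand the left-hand side dominates $L|J(t_1)|$, since $L|J| \le L|J| + \sum_{i=2}^{n_b}\|(\tilde b_i^{1/i})'\|_{L^1(J)}=D|\tilde b_\ell(t_1)|^{1/\ell}$; therefore $|J(t_1)|\le (D/L)|\tilde b_\ell(t_1)|^{1/\ell}\to 0$ as $t_1\to\al^+$, which is the stated convergence $|J(t_1)|\to 0$. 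It remains to show no interval in $\cI$ has left endpoint exactly $\al$. If $J(t_1)=(s_-,s_+)$ had $s_-=\al$, then since $\al\notin I'$ (indeed $\al$ is an endpoint of a connected component of $I'$) but $J\subseteq I'$ by the remark following \eqref{assumption2aG}, we would get $\al\in J\subseteq I'$, a contradiction; alternatively, $J\subseteq I$ is open with $s_-=\al$ would force $\al\in J$, again impossible. The case $\tilde b(\be)=0$ is symmetric, replacing $t_1\to\al^+$ by $t_1\to\be^-$ and ``left endpoint'' by ``right endpoint''.

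\textbf{Part (2): the non-vanishing endpoint.} Suppose $\tilde b(\al)\ne 0$. Then $\al\in I$ (as $\al$ is an endpoint of a component of $I'\subseteq I$, and if $\al$ were the left endpoint of $I$ itself the argument below still applies), and $|\tilde b_\ell(t_1)|^{1/\ell}$ stays bounded below by a positive constant $c_0>0$ for $t_1$ in a one-sided neighborhood of $\al$, say $t_1\in(\al,\al+\eta)$, by continuity of $\tilde b$ and compactness. Pick such a $t_1$ close enough to $\al$ that $\al+\vh$-growth cannot reach the symmetric value: more precisely, the total available mass on the left of $t_1$ is $\vh_{t_1,-}(\al^+) = L(t_1-\al) + \sum_{i=2}^{n_b}\|(\tilde b_i^{1/i})'\|_{L^1((\al,t_1])}$, and as $t_1\to\al^+$ both terms tend to $0$ (the first obviously, the second by absolute continuity of $\tilde b_i^{1/i}$ on the compact $[\al,\al+\eta]\subseteq I$, via Corollary~\ref{cor:radicals} applied near $\al$ where $\tilde b\ne 0$, together with \eqref{eq:extend}). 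Hence for $t_1$ sufficiently close to $\al$ we have $\vh_{t_1,-}(\al^+) < (D/2)c_0 \le (D/2)|\tilde b_\ell(t_1)|^{1/\ell}$, so the left half can never accumulate the mass $(D/2)|\tilde b_\ell(t_1)|^{1/\ell}$ demanded by \eqref{1kind}; we exhaust the left side at $s_-=\al$. By definition \eqref{ii} (second kind), the resulting interval $J(t_1)=(\al,s_+)$ is of second kind with endpoint $\al$, and it lies in $\cI$ by construction. The case $\tilde b(\be)\ne 0$ is symmetric.

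\textbf{Main obstacle.} The only delicate point is justifying that $\sum_{i=2}^{n_b}\|(\tilde b_i^{1/i})'\|_{L^1((\al,t_1])}\to 0$ as $t_1\to\al^+$ in Part~(2): one must invoke absolute continuity of the selections $\tilde b_i^{1/i}$ (Corollary~\ref{cor:radicals} and the extension lemma, which guarantee $(\tilde b_i^{1/i})'\in L^1$ and that the $L^1$-norm is selection-independent) and then use absolute continuity of the Lebesgue integral. One should also be careful, in Part~(1), that the claim ``$|J(t_1)|\to 0$'' is read as a statement about the chosen intervals $J(t_1)$ for $t_1$ ranging in $I'$ near the endpoint, and that the bound $|J(t_1)|\le (D/L)|\tilde b_\ell(t_1)|^{1/\ell}$ does not require knowing whether $J(t_1)$ is of first or second kind.
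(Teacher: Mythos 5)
Your Part~(2) is correct and takes a slightly different, constructive route from the paper's proof by contradiction: you exhibit a $t_1$ near $\al$ whose left-available $\vh$-mass $\vh_{t_1,-}(\al)$ is too small to ever reach $(D/2)|\tilde b_\ell(t_1)|^{1/\ell}$, forcing $J(t_1)$ to be of second kind with endpoint $\al$; the paper instead assumes all intervals are of first kind and lets $t_1\to\be$ to reach a contradiction. Both arguments are valid and equivalent in strength. (A small inaccuracy: you write ``Then $\al\in I$'', but if $\tilde b(\al)\ne 0$ then $\al$ must be the left endpoint of $I$ and hence $\al\notin I$; this does not affect the argument.)

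However, your Part~(1) contains a genuine gap. You claim that $s_-=\al$ forces $\al\in J\subseteq I'$, and also that ``$J\subseteq I$ open with $s_-=\al$ would force $\al\in J$''. Both deductions are false: $J=(s_-,s_+)$ is an \emph{open} interval, so $J=(\al,s_+)$ does not contain $\al$, and neither version of the argument produces a contradiction. The correct reason no interval $J\in\cI$ can have endpoint $\al$ when $\tilde b(\al)=0$ is a quantitative consequence of \eqref{assumption2aG}, namely \eqref{b2}: on $J$ one has $|\tilde b_\ell(t)|^{1/\ell}\ge(1-D)|\tilde b_\ell(t_1)|^{1/\ell}>0$, and by continuity this lower bound persists at the endpoints of $J$. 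Hence $\tilde b_\ell$ (and thus $\tilde b$) cannot vanish at an endpoint of $J$, which rules out $s_-=\al$ when $\tilde b(\al)=0$. This is exactly what the paper uses (``By \eqref{b2}, $\tilde b$ is non-zero at both endpoints of $J$''). Your estimate $|J(t_1)|\le(D/L)|\tilde b_\ell(t_1)|^{1/\ell}\to 0$ for the second half of Part~(1) is correct and matches the paper.
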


\begin{proof}
  (1) By \eqref{b2}, $\tilde b$ is non-zero at both endpoints of $J$. 
  That $|J(t_1)| \to 0$ as $t_1$ tends to an endpoint, where $\tilde b$ vanishes, is immediate from \eqref{assumption2aG}.

  (2) Suppose that $\tilde b(\be) \ne 0$. If all intervals $J(t_1)$ in $\cI$ were of first kind then, 
  by \eqref{assumption2aG} and \eqref{1kind}, 
  \begin{equation} \label{endpoints1}
    \vh_{t_1,+}(\be) \ge \frac{D}{2} |\tilde b_\ell(t_1)|^{1/\ell} 
    = \frac{D}{2} \max_{2 \le i \le n_b} |\tilde b_i(t_1)|^{1/i}, \quad t_1 \in (\al,\be).
  \end{equation}
  But $\vh_{t_1,+}(\be) \to 0$ as $t_1 \to \be$, while the right-hand side of \eqref{endpoints1} tends 
  to a positive constant, a contradiction. 
\end{proof}

\begin{lemma} \label{lem:interlace}
  Let $J \in \cI$ and let $t_1 \not\in J$ be such that $J(t_1)$ is of first kind. 
  Then $J \not \subseteq J(t_1)$.
\end{lemma}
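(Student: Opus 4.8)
The plan is to argue by contradiction: suppose $J \subseteq J(t_1)$, and extract a contradiction from the two defining equalities \eqref{assumption2aG} for the two intervals together with the fact that $J(t_1)$ is of first kind. Write $J = (s_-,s_+)$, $J(t_1) = (\si_-,\si_+)$, and let $s_1 \in J$ be the base point for which $J = J(s_1)$; by \eqref{b2} (which holds on $J$ by hypothesis~(1) of Proposition~\ref{cover}, cf.\ the proof of Lemma~\ref{lemB}) the coefficient vector $\tilde b$ is non-zero throughout $J$, and $\ell$ is chosen so that $|\tilde b_\ell(s_1)|^{1/\ell} = \max_{2\le i\le n_b}|\tilde b_i(s_1)|^{1/i}$. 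Since $t_1 \notin J$, we have $t_1 \le s_-$ or $t_1 \ge s_+$; by symmetry assume $t_1 \le s_- < s_1$, so in particular $J(t_1) \supseteq J$ forces $\si_- \le s_- $ and $\si_+ \ge s_+ > s_1 > t_1$.

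First I would observe that, because $J(t_1)$ is of first kind, \eqref{1kind} gives $\vh_{t_1,+}(\si_+) = \tfrac D2 |\tilde b_\ell(t_1)|^{1/\ell}$, and since $s_1 \in [t_1,\si_+)$ the monotonicity of $\vh_{t_1,+}$ yields
\[
  L(s_1 - t_1) + \sum_{i=2}^{n_b} \|(\tilde b_i^{1/i})'\|_{L^1([t_1,s_1))} \;=\; \vh_{t_1,+}(s_1) \;\le\; \frac D2 |\tilde b_\ell(t_1)|^{1/\ell}.
\]
Next, since $[t_1,s_1) \supseteq [s_-,s_1) \supseteq$ "the left half of $J$", and $J$ itself satisfies \eqref{assumption2aG}, i.e.\ $L|J| + \sum_i \|(\tilde b_i^{1/i})'\|_{L^1(J)} = D|\tilde b_\ell(s_1)|^{1/\ell}$, I want to lower-bound $\vh_{t_1,+}(s_1)$ from below by something comparable to $D|\tilde b_\ell(s_1)|^{1/\ell}$. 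The key input is that on $J$ all the coefficients are comparable: by \eqref{b3} applied on $J$ (valid under hypothesis~(1)) we have $|\tilde b_i(s)|^{1/i} \le \tfrac43 |\tilde b_\ell(s_1)|^{1/\ell}$ for all $s\in J$ and all $i$; and by the analogue of \eqref{eq:ass10}/\eqref{b1} the value $|\tilde b_\ell(t_1)|^{1/\ell}$ at the nearby point $t_1$ is controlled — the point is that the quantity $\sum_i \|(\tilde b_i^{1/i})'\|_{L^1}$ over the relevant intervals is by construction a definite fraction of $D$ times a max of $|\tilde b|$-type quantities, so that $|\tilde b_\ell(t_1)|^{1/\ell}$ cannot be much larger than $|\tilde b_\ell(s_1)|^{1/\ell}$ when $t_1$ is "just outside" $J$. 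Combining: the displayed inequality bounds a quantity of size $\gtrsim D|\tilde b_\ell(s_1)|^{1/\ell}$ (coming from the left portion of $J$ inside $[t_1,s_1)$) by $\tfrac D2|\tilde b_\ell(t_1)|^{1/\ell} \lesssim \tfrac D2 \cdot (\text{const close to }1)\cdot|\tilde b_\ell(s_1)|^{1/\ell}$, and with the sharp constants from \eqref{b2}, \eqref{b3} this is contradictory, since $[t_1,s_1)$ contains all of $(s_-,s_1)$ which already accounts for a full "half" of the $J$-budget $D|\tilde b_\ell(s_1)|^{1/\ell}$ — or, more carefully, one uses that $J=J(s_1)$ was itself chosen of first kind when possible, so its left half carries exactly $\tfrac D2|\tilde b_\ell(s_1)|^{1/\ell}$ of budget, which already exceeds $\tfrac D2|\tilde b_\ell(t_1)|^{1/\ell}$ unless $|\tilde b_\ell(t_1)|^{1/\ell} \ge |\tilde b_\ell(s_1)|^{1/\ell}$; and in the latter case one re-runs the comparison with the roles adjusted, using that $t_1\notin J$ means the whole of $J$ is "beyond" $t_1$, forcing $\vh_{t_1,+}$ to have grown past $\tfrac D2|\tilde b_\ell(t_1)|^{1/\ell}$ strictly before reaching $\si_+$, contradicting first-kindness.

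The main obstacle I anticipate is the bookkeeping of the constants and the case split according to whether $J=J(s_1)$ is itself of first or second kind, and whether $|\tilde b_\ell(t_1)|^{1/\ell}$ is larger or smaller than $|\tilde b_\ell(s_1)|^{1/\ell}$: one must track carefully, using \eqref{b2} and \eqref{b3} with their explicit constants $\tfrac23, \tfrac43$, that the "$\tfrac D2$ budget" of $J(t_1)$ on the side facing $J$ is strictly smaller than the budget $J$ has already spent between $t_1$ and $s_1$. The cleanest route is probably: assume $J\subseteq J(t_1)$, note $s_1 \in J \subseteq J(t_1) = (\si_-,\si_+)$ with, say, $t_1 < s_- \le s_1 < s_+ \le \si_+$, apply first-kindness of $J(t_1)$ to get $\vh_{t_1,+}(\si_+) = \tfrac D2|\tilde b_\ell(t_1)|^{1/\ell}$, hence $\vh_{t_1,+}(s_+) < \tfrac D2|\tilde b_\ell(t_1)|^{1/\ell}$, while on the other hand $\vh_{t_1,+}(s_+) \ge \vh_{s_-,+}(s_+) \ge$ (the left-to-right budget of $J$ measured from $s_-$) and then invoke \eqref{assumption2aG} for $J$ together with \eqref{b2}/\eqref{b3} to see $\vh_{t_1,+}(s_+) \gtrsim D|\tilde b_\ell(s_1)|^{1/\ell} \gtrsim \tfrac D2 |\tilde b_\ell(t_1)|^{1/\ell}$ with a constant $>1$, the required contradiction. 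I would fill in the precise constant-chasing at the end once the structure is pinned down.
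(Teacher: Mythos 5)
Your final paragraph reproduces the paper's argument, just with the opposite side convention (the paper takes $\be_{s_1}\le t_1$ and works with $\vh_{t_1,-}$; you take $t_1\le s_-$ and work with $\vh_{t_1,+}$): one bounds $\vh_{t_1,+}(s_+)$ from above by $\tfrac D2|\tilde b_{\ell_{t_1}}(t_1)|^{1/\ell_{t_1}}$ via first-kindness, from below by $D|\tilde b_{\ell_{s_1}}(s_1)|^{1/\ell_{s_1}}$ via \eqref{assumption2aG} applied to $J\subseteq[t_1,s_+)$, and closes the contradiction via \eqref{b2} on $J(t_1)$ at $s_1$ together with the maximality defining $\ell_{s_1}$, which give $|\tilde b_{\ell_{t_1}}(t_1)|^{1/\ell_{t_1}}<\tfrac32|\tilde b_{\ell_{t_1}}(s_1)|^{1/\ell_{t_1}}\le\tfrac32|\tilde b_{\ell_{s_1}}(s_1)|^{1/\ell_{s_1}}<2|\tilde b_{\ell_{s_1}}(s_1)|^{1/\ell_{s_1}}$. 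The detours in your middle paragraph --- the case split on whether $J$ is of first or second kind and the attempt to compare with ``half budgets'' of $J$ over $[t_1,s_1)$ --- are unnecessary and should be dropped, since, as your own final outline shows, only first-kindness of $J(t_1)$ and the full budget identity \eqref{assumption2aG} for $J$ (contained in $[t_1,s_+)$) are used.
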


\begin{proof}
  Let $J=J(s_1) = (\al_{s_1},\be_{s_1})$ and assume without loss of generality that $\be_{s_1} \le t_1$. 
  Suppose that $J(s_1) \subseteq J(t_1)$.
  Since $J(t_1)=(\al_{t_1},\be_{t_1})$ is of first kind (cf.\ \eqref{1kind}), we have 
  \begin{align*}
    L (t_1 -\al_{t_1})    
    + \sum_{i=2}^{n_b} \|(\tilde b_i^{1/i})'\|_{L^1 ((\al_{t_1},t_1])} 
    = \vh_{t_1,-}(\al_{t_1})
    =  \frac{D}{2} |\tilde b_{\ell_{t_1}}(t_1)|^{1/\ell_{t_1}} 
    < D |\tilde b_{\ell_{s_1}}(s_1)|^{1/\ell_{s_1}},
  \end{align*}
  because by \eqref{b2} and \eqref{b3} (which follow from \eqref{eq:ell} and \eqref{assumption2aG}),
  \[
    |\tilde b_{\ell_{t_1}}(t_1)|^{1/\ell_{t_1}} < \frac 3 2 |\tilde b_{\ell_{t_1}}(s_1)|^{1/\ell_{t_1}} 
    \le 2 |\tilde b_{\ell_{s_1}}(s_1)|^{1/\ell_{s_1}}. 
  \]
  But this leads to a contradiction in view of \eqref{assumption2aG}.
\end{proof}

Let us now prove Proposition \ref{cover}.

\subsubsection*{Case (i)}
  By \eqref{1case} and Lemma \ref{endpoints}, 
  each $J \in \cI$ is an interval of first kind.

  Choose any interval $J(t_1)$, $t_1 \in (\al,\be)$, and denote it by $J_0 = (\al_0,\be_0)$. 
  Define recursively (for $\ga \in \Z$)
  \begin{align*}
    J_\ga = (\al_\ga,\be_\ga) := 
    \begin{cases}
      J(\be_{\ga-1}) & \text{if } \ga \ge 1, \\
      J(\al_{\ga+1}) & \text{if } \ga \le -1.  
    \end{cases} 
  \end{align*} 
  By Lemma \ref{lem:interlace}, we have $\al < \al_\ga < \al_{\ga+1}$ and $\be_\ga < \be_{\ga+1} < \be$ for all $\ga$.
  Let us show that 
  the collection $\cJ = \{J_\ga\}_{\ga \in \Z}$ covers $(\al,\be)$.   
  Suppose that, say,
  $\ta := \sup_\ga \be_\ga <\be$.  
  By \eqref{assumption2aG} and since all intervals are of first kind (cf.\ \eqref{1kind}),
  \begin{align*}
    L (\ta -\be_{\ga})    
    + \sum_{i=2}^{n_b} \|(\tilde b_i^{1/i})'\|_{L^1 ((\be_{\ga},\ta])} 
    \ge   
    \frac{D}{2} \max_{2 \le i \le n_b} |\tilde b_i(\be_\ga)|^{1/i}. 
  \end{align*}
  But the left-hand side tends to $0$ as $\ga \to +\infty$, whereas the right-hand side converges to 
  $(D/2) \max_{2 \le i \le n_b} |\tilde b_i(\ta)|^{1/i}>0$, a contradiction.

  Now Proposition \ref{cover} follows from Lemma \ref{endpoints} and the following lemma.

  \begin{lemma} \label{glue}
  Let $\cJ = \{J_\ga\}_{\ga \in \Z}$ be a countable collection of bounded open intervals 
  $J_\ga = (\al_\ga,\be_\ga) \subseteq \R$ such that 
  \begin{enumerate}
    \item $\bigcup \cJ = (\al,\be)$ is a bounded open interval,
    \item $\al < \al_\ga < \al_{\ga+1}$ and $\be_\ga < \be_{\ga+1}< \be$ for all $\ga \in \Z$, 
    \item $|J_\ga| \to 0$ as $\ga \to \pm \infty$.
  \end{enumerate}
  Then there is a subcollection $\cJ_0 \subseteq \cJ$ with 
  $\bigcup \cJ_0 = (\al,\be)$ and such that every point in $(\al,\be)$ 
  belongs to at most two intervals in $\cJ_0$. 
\end{lemma}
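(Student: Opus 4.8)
The goal is purely combinatorial: given a two-sided sequence of intervals $J_\ga = (\al_\ga,\be_\ga)$ whose left endpoints strictly increase, whose right endpoints strictly increase, which cover $(\al,\be)$, and which shrink to points at both ends, I want to thin the collection so that it still covers $(\al,\be)$ but no point lies in three of the chosen intervals. The plan is to build $\cJ_0$ greedily, sweeping from left to right. First I would observe that, because left endpoints are strictly increasing and $|J_\ga|\to 0$ as $\ga\to-\infty$, the sets $J_\ga$ with very negative $\ga$ are tiny intervals near $\al$; more precisely, for each $x\in(\al,\be)$ only finitely many $J_\ga$ contain $x$ (if infinitely many did, their left endpoints $\al_\ga<x$ would force, together with the covering and monotonicity, a violation of $|J_\ga|\to 0$). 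Hence "the largest index $\ga$ with $\al_\ga<x$" and "the smallest index with $\be_\ga>x$" are well defined, and each point is covered by only finitely many intervals.

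Next I would run the following selection. Start near the left end: fix any point close to $\al$; among the (finitely many) intervals containing it, choose the one, call it $J_{\ga_0}$, with the largest right endpoint $\be_{\ga_0}$ — equivalently, since right endpoints increase with index, the one with the largest index among those covering that point. Having chosen $J_{\ga_i} = (\al_{\ga_i},\be_{\ga_i})$, if $\be_{\ga_i}=\be$ we stop; otherwise $\be_{\ga_i}\in(\al,\be)$ is covered (the $J_\ga$ cover $(\al,\be)$) by some interval, necessarily with index $>\ga_i$ since $\be_\ga$ strictly increases, so among all intervals containing $\be_{\ga_i}$ pick the one $J_{\ga_{i+1}}$ with the largest right endpoint. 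This gives a strictly increasing sequence of indices $\ga_0<\ga_1<\ga_2<\cdots$; I would argue it reaches $\be$ in the limit by the same contradiction used in Case (i): if $\sup_i \be_{\ga_i}=\ta<\be$, then $\ta$ is covered by some $J_\ga$ with $\be_\ga>\ta$, and for large $i$ this interval contains $\be_{\ga_i}$, forcing $\ga_{i+1}$ to have right endpoint $>\ta$, a contradiction. Symmetrically I extend the selection leftward from $J_{\ga_0}$ toward $\al$, obtaining $\cdots<\ga_{-2}<\ga_{-1}<\ga_0$ with $\inf\al_{\ga_i}=\al$.

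Then I would check the two required properties of $\cJ_0=\{J_{\ga_i}\}$. Covering: consecutive chosen intervals $J_{\ga_i}$ and $J_{\ga_{i+1}}$ overlap because $\be_{\ga_i}\in J_{\ga_{i+1}}$ by construction (and $\al_{\ga_{i+1}}<\be_{\ga_i}$), and the endpoints sweep out all of $(\al,\be)$, so $\bigcup\cJ_0=(\al,\be)$. Multiplicity at most two: suppose some $x$ lies in three chosen intervals $J_{\ga_i}\subseteq$, with $i<j<k$ consecutive in the selected index set in the sense that all three are selected. Then $x<\be_{\ga_i}$, and $x\in J_{\ga_k}$ means $\al_{\ga_k}<x<\be_{\ga_i}$. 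But when $J_{\ga_{i+1}}$ was chosen as an interval containing $\be_{\ga_i}$ with the largest right endpoint, and $J_{\ga_k}$ also contains $\be_{\ga_i}$ (since $\al_{\ga_k}<x<\be_{\ga_i}<\be_{\ga_k}$), we would have had to pick $\be_{\ga_{i+1}}\ge\be_{\ga_k}$; combined with the strict monotonicity of right endpoints in the index, this forces $\ga_{i+1}=\ga_k$, contradicting $i+1\le j<k$. Hence no point lies in three selected intervals. The one point to be careful about — and the step I expect to be the main obstacle — is the "largest right endpoint among intervals containing a given point" choice: I must make sure this maximum is attained, which is exactly where finiteness of the cover at each point (established at the start from $|J_\ga|\to0$ and monotonicity) is used, and I must double-check the leftward/rightward gluing at $\al$ and $\be$ using hypothesis (3) so the limiting indices genuinely exhaust $(\al,\be)$.
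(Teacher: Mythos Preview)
Your proposal is correct and follows essentially the same approach as the paper. Your greedy rule ``among all intervals containing $\be_{\ga_i}$, pick the one with the largest right endpoint'' is, by the strict monotonicity of $\be_\ga$ in $\ga$, exactly the paper's rule ``choose $\ga_{i+1}$ so that $\al_{\ga_{i+1}} < \be_{\ga_i} \le \al_{\ga_{i+1}+1}$''; the paper then reads off the interlacing $\cdots < \be_{\ga_{j-2}} < \al_{\ga_j} < \be_{\ga_{j-1}} < \al_{\ga_{j+1}} < \cdots$ directly, which gives both covering and multiplicity~$\le 2$ at once, whereas you obtain the multiplicity bound by the equivalent contradiction argument.
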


\begin{proof}
  The assumptions imply that the sequence of left endpoints $(\al_\ga)$ converges to $\be$ as $\ga \to \infty$,
  and the sequence of right endpoints $(\be_\ga)$ converges to $\al$ as $\ga \to -\infty$. 
  Thus, there exists $\ga_1 > 0$ such that $\al_{\ga_1} < \be_0 \le \al_{\ga_1 + 1}$, 
  there exists $\ga_2 > \ga_1$ such that $\al_{\ga_2} < \be_{\ga_1} \le \al_{\ga_2 + 1}$, and iteratively, 
  there exists $\ga_j > \ga_{j-1}$ such that $\al_{\ga_j} < \be_{\ga_{j-1}} \le \al_{\ga_j + 1}$. 
  Symmetrically,  
  there exist integers $\ga_{j-1} < \ga_{j} < 0$ ($j \in \Z_{<0}$) such that 
  $\be_{\ga_{j-1}-1} \le \al_{\ga_j} < \be_{\ga_{j-1}}$.
  Set $\ga_0 := 0$ and define 
  \[
    \cJ_0 := \{J_{\ga_j}\}_{j \in \Z}.
  \]
  By construction $\cJ_0$ still covers $(\al,\be)$ and the left and right endpoints of the intervals $J_{\ga_j}$ are interlacing,
  \[
    \cdots < \be_{\ga_{j-2}}< \al_{\ga_j} < \be_{\ga_{j-1}} < \al_{\ga_{j+1}} < \be_{\ga_{j}} < \al_{\ga_{j+2}} < \cdots 
  \]
  Thus $\cJ_0$ has the required properties.
\end{proof}

Proposition \ref{cover} is proved in Case (i).

\subsubsection*{Case (ii)} By \eqref{2case} and Lemma \ref{endpoints}, the collection $\cI$ contains an interval of second kind. 
  Since $\tilde b(\al) = 0$, all intervals of second kind in $\cI$ must have endpoint $\be$. 
  Thus,   
  \[
    \ta := \inf\{t_1 : J(t_1) \in \cI \text{ is of second kind}\} > \al,
  \] 
  because $|J(t_1)| \to 0$ as $t\to \al$ by Lemma \ref{endpoints}.
  The interval $J(\ta)$ is of first kind (being of second kind is an open condition).  
  There is an interval $J_0 = (\al_0,\be_0=\be)$ of second kind in $\cI$ with $J(\ta) \cap J_0 \ne \emptyset$. 
  Let us denote $J(\ta)$ by $J_{-1} = (\al_{-1},\be_{-1})$ and define recursively  
  \begin{align*}
    J_\ga = (\al_\ga,\be_\ga) := 
      J(\al_{\ga+1}), \quad \ga \le -1. 
  \end{align*} 
  The arguments in Case (i) imply that the collection $\cJ:= \{J_\ga\}_{\ga \le 0}$ is a countable cover of $(\al,\be)$ 
  satisfying $\al < \al_\ga < \al_{\ga+1}$ and $|J_\ga| \to 0$.

  Proposition \ref{cover} follows from (an obvious modification of) Lemma \ref{glue}. This ends Case (ii).

  \subsubsection*{Case (iii)}
  In this case $\cI$ has a finite subcollection $\cJ$ that still covers $(\al,\be)$. Indeed,
  by \eqref{3case} and Lemma \ref{endpoints}, the collection $\cI$ contains 
  intervals of second kind with endpoints $\al$ and $\be$, 
  say, $(\al,\de)$ and $(\ep,\be)$. If their intersection is non-empty we are done. Otherwise there are finitely many   
  intervals in $\cI$ that cover the compact interval $[\de,\ep]$.

  Proposition \ref{cover} follows from the following lemma.  

  \begin{lemma}
    Every finite collection $\cJ$ of open intervals with $\bigcup \cJ = (\al,\be)$ 
    has a subcollection that still covers $(\al,\be)$ and every point in $(\al,\be)$ belongs to at most two 
    intervals in the subcollection.  
  \end{lemma}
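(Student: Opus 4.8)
The statement is classical and elementary, and the plan is to prove it by a greedy left-to-right sweep, always selecting the interval that reaches furthest to the right. First I would record two trivial facts: since $\bigcup\cJ=(\al,\be)$, every $(a,b)\in\cJ$ satisfies $\al\le a$ and $b\le\be$, while points of $(\al,\be)$ arbitrarily close to $\al$ (resp.\ to $\be$) must be covered; hence $\al$ is the smallest left endpoint and $\be$ the largest right endpoint occurring among the members of $\cJ$.

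Next I would build a finite sequence $J_1,\dots,J_N\in\cJ$, writing $J_k=(a_k,b_k)$. Take $J_1$ with $a_1=\al$ and $b_1$ as large as possible among the intervals in $\cJ$ with left endpoint $\al$. Inductively, if $b_k=\be$ stop and set $N=k$; otherwise $b_k\in(\al,\be)$ is covered by some member of $\cJ$, so among all $(a,b)\in\cJ$ with $a<b_k$ pick $J_{k+1}$ with $b_{k+1}$ as large as possible. The right endpoints then satisfy $b_{k+1}>b_k$ (an interval covering $b_k$ has right endpoint exceeding $b_k$ and is admissible at stage $k$), so the $J_k$ are pairwise distinct and, $\cJ$ being finite, the process terminates, necessarily with $b_N=\be$. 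The maximality in each choice yields the structural inequalities I need: $a_{k+1}<b_k$ by construction, and $a_{k+1}\ge b_{k-1}$ for $k\ge2$ (otherwise $J_{k+1}$ would have been admissible already at stage $k-1$, contradicting the maximality of $b_k$); together with $a_2>\al$ these give $a_k<a_{k+1}$ for all $k$ and, crucially, $b_k\le a_{k+2}$, so $J_k\cap J_{k+2}=\emptyset$.

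Finally I would take $\cJ_0:=\{J_1,\dots,J_N\}$: it is a subcollection of $\cJ$; it covers $(\al,\be)$ because $a_1=\al$, $b_N=\be$, and consecutive intervals overlap ($a_{k+1}<b_k$); and every point of $(\al,\be)$ lies in at most two of the $J_k$, since among any three of them the first and the last have indices differing by at least $2$, while $J_i\cap J_j=\emptyset$ whenever $|i-j|\ge2$ (from $b_i\le a_{i+2}\le a_j$, using that the $a_k$ increase). The degenerate case $N=1$, where $J_1=(\al,\be)$, is subsumed. I do not anticipate a genuine obstacle here; the only point requiring a little care is the look-back inequality $b_k\le a_{k+2}$, which is precisely the pay-off of always choosing the furthest-reaching interval and is what caps the covering multiplicity at two.
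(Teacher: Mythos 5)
Your proof is correct and rests on the same greedy left-to-right sweep that the paper uses (always take the admissible interval reaching furthest right); the setup, the termination argument, and the verification that the resulting chain covers $(\al,\be)$ all match. The one substantive difference is in how the multiplicity bound is obtained. The paper constructs the greedy chain $J_0,J_1,\dots,J_N$ (each $J_{i+1}$ chosen among intervals containing $\be_i$ with maximal right endpoint) and then runs a second, explicit thinning pass: it sets $i_1 := \max\{i : \al_i < \be_0\}$, $i_j := \max\{i : \al_i < \be_{i_{j-1}}\}$, and takes $\{J_0,J_{i_1},\dots,J_N\}$ as the final subcollection. You instead extract the look-back inequality $a_{k+2}\ge b_k$ directly from the maximality of the greedy choice and conclude that the raw greedy chain already has multiplicity at most two. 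That is a small but genuine economy: your inequality in fact shows (applied to the paper's chain) that $i_j=j$ for all $j$, i.e.\ the paper's thinning pass re-selects the entire chain and is redundant. The paper's route is arguably safer in that it does not require verifying the look-back inequality, at the cost of an extra (vacuous) pass; your route is tighter and makes the mechanism of the multiplicity bound explicit. Both are correct.
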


  \begin{proof}
    The collection $\cJ$ contains an interval with endpoint $\al$; let $J_0 = (\al = \al_0,\be_0)$ be the biggest among them. 
    If $\be_0 < \be$,
    let $J_1 = (\al_1,\be_1)$ denote the interval among all intervals in $\cJ$ 
    containing $\be_0$ whose right endpoint is maximal.
    If $\be_1 < \be$,
    let $J_2 = (\al_2,\be_2)$ denote the interval among all intervals in $\cJ$ containing $\be_1$ whose right 
    endpoint is maximal, etc. 
    This yields a finite cover of $(\al,\be)$ by intervals $J_i = (\al_i,\be_i)$, $i = 0,1,\ldots,N$, 
    such that $\al_0 < \al_1 < \cdots < \al_N$.
    Define 
    \[
      i_1 := \max_{\al_i <\be_0} i, \qquad i_j := \max_{\al_i <\be_{i_{j-1}}} i,  \quad j \ge 2.  
    \]
    Then $\{J_0,J_{i_1},J_{i_2},\ldots,J_N\}$ has the required properties.
  \end{proof}

  The proof of Proposition \ref{cover} is complete.

\section{Proof of Theorem \ref{main}} \label{proof}

We suppose henceforth that for each integer $n$ a universal splitting of polynomials of degree $n$ in Tschirnhausen form 
in the sense of Section \ref{universal} has been fixed. Whenever we speak of a splitting we mean the 
fixed universal splitting.   
Accordingly, we will apply the following convention:
\begin{quote}
  \it All dependencies of constants on data of the universal splitting, like $\rh$, $\tilde \ps_i$, etc., 
  (see Section \ref{universal}) will no longer 
      be explicitly stated. For simplicity it will henceforth be subsumed by saying that the constants depend on the degree of 
      the polynomials. The constants which are universal in this sense will be denoted by $C$ and may vary from line to line.   
\end{quote}

The heart of the proof of Theorem \ref{main} is the following 
proposition. It comprises the inductive argument on the degree.

\begin{proposition} \label{induction}
  Let $I \subseteq \R$ be a bounded open interval and 
  let $P_{\tilde a}$ be a monic polynomial of degree $n_{\tilde a}$ in Tschirnhausen form with coefficients of class 
  $C^{n_{\tilde a}-1,1}(\oI)$.
  Let $t_0 \in I$ and $k \in \{2,\ldots, n_{\tilde a}\}$ be such that 
  \begin{enumerate}
    \item $|\tilde a_k(t_0)|^{1/k} =  \max_{2 \le j\le n_{\tilde a}} |\tilde a_j(t_0)|^{1/j}\ne 0$,
    \item $\sum_{j=2}^{n_{\tilde a}} \|(\tilde a_j^{1/j})'\|_{L^1(I)} \le B |\tilde a_k(t_0)|^{1/k}$ 
    for some constant $B< 1/3$, 
    \item for all  
    $j = 2, \ldots,n_{\tilde a}$ and 
    $s = 1,\ldots,n_{\tilde a}-1$, 
    \begin{align*}
      \|\tilde a_j^{(s)} \|_{L^\infty(I)} &\le C |I|^{-s}  |\tilde a_k (t_0)|^{j/k},
      \\
      \Lip_I(\tilde a_j^{(n_{\tilde a}-1)} ) &\le C |I|^{-n_{\tilde a}}  |\tilde a_k (t_0)|^{j/k},
    \end{align*}
    where $C= C(n_{\tilde a})$.
    \item Assume that $P_{\tilde a}$ splits on $I$, i.e., $P_{\tilde a}(t) = P_b(t) P_{b^*}(t)$ for $t \in I$, where 
    $b_i$ and $b^*_i$ are given by \eqref{eq:bj}.  
  \end{enumerate}
  Then every continuous root $\mu \in C^0(I)$ of $P_{\tilde b}$ is absolutely continuous and satisfies
  \begin{equation} \label{muab}
    \|\mu'\|_{L^p(I)} \le C  \Big( \| |I|^{-1}  {|\tilde a_k(t_0)|^{1/k}} \|_{L^p (I)} 
    + \sum_{i=2}^{n_b} \|(\tilde b_i^{1/i})'\|_{L^p (I)}\Big),
  \end{equation}   
  for all $1 \le p < (n_{\tilde a})'$ and a constant $C$ depending only on $n_{\tilde a}$ and $p$.  
\end{proposition}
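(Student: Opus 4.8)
The plan is to prove Proposition \ref{induction} by induction on the degree $n_{\tilde a}$, using the splitting in hypothesis (4) together with the machinery of Sections \ref{bestimates} and \ref{specialcover}. Since $P_{\tilde a} = P_b P_{b^*}$, every continuous root $\mu$ of $P_{\tilde b}$ corresponds (after the Tschirnhausen transformation $P_b \leadsto P_{\tilde b}$) to a root of a polynomial of strictly smaller degree $n_b < n_{\tilde a}$, plus the additive shift $-b_1/n_b$ coming from the Tschirnhausen transformation. The shift is harmless: by Remark \ref{rem:b1prime}, $\|b_1'\|_{L^\infty(I)} \le C|I|^{-1}|\tilde a_k(t_0)|^{1/k}$, so its contribution to $\|\mu'\|_{L^p(I)}$ is bounded by $C\||I|^{-1}|\tilde a_k(t_0)|^{1/k}\|_{L^p(I)}$, the first term on the right of \eqref{muab}. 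So it suffices to bound $\|\nu'\|_{L^p(I)}$ for a continuous root $\nu$ of $P_{\tilde b}$ itself.

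Next I would cover $I'$ (the set where not all $\tilde b_i$ vanish) by the intervals $J=J(t_1)$ constructed in Section \ref{subintervals}, chosen of first kind whenever possible, so that the crucial identity \eqref{assumption2a} holds on each $J$: namely $|J||I|^{-1}|\tilde a_k(t_0)|^{1/k} + \sum_{i=2}^{n_b}\|(\tilde b_i^{1/i})'\|_{L^1(J)} = D|\tilde b_\ell(t_1)|^{1/\ell}$, where $\ell$ is dominant at $t_1$. On each such $J$, Lemma \ref{lemB} tells us the $\tilde b_i$ satisfy the analogues of hypotheses (1)--(3) with $(\tilde a_k(t_0),k,I)$ replaced by $(\tilde b_\ell(t_1),\ell,J)$. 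After splitting $P_{\tilde b} = P_c P_{c^*}$ on $J$ (shrinking $J$ if necessary so the curve $\underline b$ lies in one ball of the universal splitting — this is where I must be a little careful, since the universal splitting controls how much one may shrink, via the Lebesgue number $\rho$, and Lemma \ref{lemB} bounds the length of $\underline b$ by a constant times $D$, so taking $D$ small makes $\underline b$ fit), the induction hypothesis applied to $P_{\tilde c}$ on $J$ gives, for the corresponding root,
\begin{equation*}
  \|\nu'\|_{L^p(J)} \le C\Big(\||J|^{-1}|\tilde b_\ell(t_1)|^{1/\ell}\|_{L^p(J)} + \sum_{h=2}^{n_c}\|(\tilde c_h^{1/h})'\|_{L^p(J)}\Big).
\end{equation*}
Then I would invoke the complex Glaeser/radical estimate (Corollary \ref{cor:radicals}, via Lemma \ref{lem:Lpbak}) to bound the $\sum_h \|(\tilde c_h^{1/h})'\|_{L^p(J)}$ term by $C|J|^{-1+1/p}|\tilde b_\ell(t_1)|^{1/\ell}$, and the identity \eqref{assumption2a} to rewrite $|J|^{-1+1/p}|\tilde b_\ell(t_1)|^{1/\ell} = D^{-1}|J|^{1/p}(|I|^{-1}|\tilde a_k(t_0)|^{1/k} + \sum_i \|(\tilde b_i^{1/i})'\|_{L^1(J)})$, arriving at the local estimate
\begin{equation*}
  \|\nu'\|_{L^p(J)}^p \le C\Big(|J||I|^{-p}|\tilde a_k(t_0)|^{p/k} + \sum_{i=2}^{n_b}\|(\tilde b_i^{1/i})'\|_{L^1(J)}^p\Big),
\end{equation*}
possibly after an application of Hölder to pass from $L^1(J)$ to $L^p(J)$ for the $\tilde b_i$ terms (acceptable since $|J|$ is finite and we have a power of $|J|$ to spare).

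Finally I would glue: by Proposition \ref{cover} the cover $\{J(t_1)\}$ has a countable subcollection $\cJ$ covering $I'$ with multiplicity at most two and $\sum_{J\in\cJ}|J| \le 2|I'| \le 2|I|$. Using the $\sigma$-additivity of $\|\cdot\|_{L^p}^p$,
\begin{equation*}
  \|\nu'\|_{L^p(I')}^p \le \sum_{J\in\cJ}\|\nu'\|_{L^p(J)}^p \le C\Big(|I|^{-p}|\tilde a_k(t_0)|^{p/k}\sum_{J}|J| + \sum_i\|(\tilde b_i^{1/i})'\|_{L^1(I)}^p\Big),
\end{equation*}
and both terms are dominated by the right-hand side of \eqref{muab} (using $\sum_J|J| \le 2|I|$ for the first, and Hölder plus $\||I|^{-1}|\tilde a_k(t_0)|^{1/k}\|_{L^p(I)} = |I|^{-1+1/p}|\tilde a_k(t_0)|^{1/k}$ to absorb the $|I|$ factors). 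On $I\setminus I'$ all $\tilde b_i$ vanish so $\nu \equiv 0$ there, and the extension Lemma \ref{lem:extend} shows $\nu' \in L^p(I)$ with $\|\nu'\|_{L^p(I)} = \|\nu'\|_{L^p(I')}$; absolute continuity follows from $W^{1,1}(I) = AC(I)$. The base case is $n_{\tilde a} = 2$ (or even the degenerate case where $P_{\tilde b}$ is linear), where the root is an explicit radical and Corollary \ref{cor:radicals} applies directly. The main obstacle I anticipate is the careful bookkeeping of which universal-splitting constants and which choices of $B$, $D$ are needed so that all the intermediate hypotheses (the curve lying in one ball, $D<1/3$, the applicability of Lemma \ref{lemB}, etc.) are simultaneously satisfiable with constants depending only on the degree — and ensuring the induction hypothesis is applied to $P_{\tilde c}$ on $J$ with exactly the right normalization so that \eqref{assumption2a} can be substituted cleanly.
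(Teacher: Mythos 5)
Your overall plan reproduces the paper's proof: induction on degree, covering $I'$ by intervals $J$ satisfying the exact identity \eqref{assumption2a}, splitting $P_{\tilde b}=P_cP_{c^*}$ on each $J$ (with $D$ small so $\underline b(J)$ fits one ball of the universal splitting), applying the induction hypothesis and Lemma~\ref{lem:Lpbak}, rewriting via \eqref{assumption2a}, gluing by Proposition~\ref{cover} and the $\sigma$-additivity of $\|\cdot\|^p_{L^p}$, and finally invoking Lemma~\ref{lem:extend}. That is the right route, and you correctly flag that ``shrinking $J$'' would destroy \eqref{assumption2a}, so the fix must be to fix $D$ small at the outset — exactly the paper's \eqref{D}.

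There is, however, a genuine gap in how you handle the Tschirnhausen shift. Your opening paragraph treats a shift $-b_1/n_b$ on $I$, but $\mu$ is already a root of $P_{\tilde b}$ by hypothesis, so no $b_1$-shift appears inside the proof of Proposition~\ref{induction} at all (that shift arises only later, in Step~3 of the proof of Theorem~\ref{main}, when converting a root $\la$ of $P_{\tilde a}$ to a root of $P_{\tilde b}$). The shift you actually need is $\tilde\mu:=\mu + c_{\ga 1}/n_{c_\ga}$ on each subinterval $J_\ga$: the induction hypothesis is applied to $P_{\tilde b}$ on $J_\ga$ (which splits as $P_{c_\ga}P_{c_\ga^*}$) and therefore controls $\tilde\mu'$ — a root of $P_{\tilde c_\ga}$ — not $\mu'$. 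Converting back requires a separate bound $\|c_{\ga 1}'\|_{L^p(J_\ga)}\le C D^{-1}(\ldots)$, obtained from Remark~\ref{rem:b1prime} at the level of $J_\ga$ together with \eqref{assumption2a}; this is the paper's estimate \eqref{c1prime}, which must be glued in parallel with the $\tilde\mu'$ bounds. Your sketch never performs this conversion, so what you glue are estimates for $\tilde\mu'$ which does not even define a single function on $I'$. A secondary, smaller issue: your displayed local estimate drops the factor $|J|^{1-p}$ in front of $\sum_i\|(\tilde b_i^{1/i})'\|_{L^1(J)}^p$; keeping track of it — equivalently, working with the normalized norms $\|\cdot\|^*_{L^p}$ as the paper does — is what makes the $|J|$-powers cancel exactly so that the glued bound contains $\|(\tilde b_i^{1/i})'\|_{L^p(I)}$ without a spurious factor of $|I|^{p-1}$.
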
 

The proof of Theorem \ref{main} is divided into three steps. 
\begin{description} 
   \item[Step 1] We check that a monic polynomial in Tschirnhausen form satisfying 
   the assumptions of Theorem \ref{main} also satisfies those of Proposition~\ref{induction}.
   \item[Step 2] We prove Proposition \ref{induction}.
   \item[Step 3] We finish the proof of Theorem \ref{main}. 
   The goal is to estimate the right-hand side of \eqref{muab} in terms of the $\tilde a_j$.
 \end{description} 

\subsection*{Step 1: The assumptions of Theorem \ref{main} imply those of Proposition \ref{induction}}

Let $(\al,\be) \subseteq \R$ be a bounded open interval and let
\begin{equation} \label{polynomial}
  P_{\tilde a(t)}(Z) = Z^n + \sum_{j=2}^n \tilde a_j(t) Z^{n-j}, \quad t \in (\al,\be),
\end{equation}
be a monic polynomial in Tschirnhausen form with coefficients $\tilde a_j \in C^{n-1,1}([\al,\be])$, $j = 2,\ldots,n$.

Let $\rh$ be the radius of the fixed universal splitting of polynomials of degree $n$ in Tschirnhausen 
form (cf.\ Section \ref{universal}). 
We fix a universal positive constant $B$ satisfying 
\begin{gather} \label{eq:constB}
  B < \min\Big\{\frac1 3, \frac{\rh}{3 n^2 2^{n}}\Big\}. 
\end{gather}

Fix $t_0 \in (\al,\be)$ and $k \in \{2,\ldots,n\}$ such that \eqref{eq:k} holds, i.e., 
\begin{equation} \label{k}
  |\tilde a_k(t_0)|^{1/k} =  \max_{2 \le j \le n} |\tilde a_j(t_0)|^{1/j}\ne 0
\end{equation}
This is possible unless $\tilde a \equiv 0$ in which case nothing is to prove. 
Choose a maximal open interval $I \subseteq (\al,\be)$ containing $t_0$ such that we have \eqref{assumption}, i.e., 
\begin{align}\label{assumption<=}
\const |I| + \sum_{j=2}^n \|(\tilde a_j^{1/j})'\|_{L^1 (I)} \le  B |\tilde a_k(t_0)|^{1/k},
\end{align}
with $\const$ given by \eqref{M}.
In particular, all conclusions of Section \ref{aestimates} hold true.

Consider the point $\underline p = \ul a(t_0)$, where $\ul a$ is the curve defined in \eqref{curve}. By \eqref{k}, 
$\underline p$ is an element of the set $K$ defined in \eqref{eq:compactK}. 
By the properties of the universal splitting specified in Section \ref{universal},
the ball $B_\rh(\underline p)$ is contained in some ball of the finite cover $\cB$ of $K$. 
By Lemma~\ref{lem2} and \eqref{eq:constB}, 
the length of the curve $\ul a|_I$ is bounded by $\rh$. Thus we have a splitting on $I$, 
\[
  P_{\tilde a}(t) = P_b(t) P_{b^*}(t), \quad t \in I.
\] 
The coefficients $b_i$ of $P_b$ are given by \eqref{eq:b_i0}, and, after the Tschirnhausen transformation 
$P_b \leadsto P_{\tilde b}$, 
the coefficients $\tilde b_i$ of $P_{\tilde b}$ are given by \eqref{eq:b_i}.  
(Similar formulas hold for $b_i^*$ and $\tilde b_i^*$.)

In summary, the restriction of the curve of polynomials $P_{\tilde a}$ 
to the interval $I$ satisfies all assumptions and thus all conclusions of  
Sections \ref{aestimates} and \ref{bestimates}. 
In particular, the assumptions of Proposition \ref{induction}
are satisfied. Thus we have proved the following lemma.

\begin{lemma} \label{lem:assThm1implyassProp3}
  Let $(\al,\be) \subseteq \R$ be a bounded open interval and let $P_{\tilde a}$ be a polynomial \eqref{polynomial}
  in Tschirnhausen form with coefficients $\tilde a_j \in C^{n-1,1}([\al,\be])$, $j = 2,\ldots,n$.  
  Let $B$ be a positive constant satisfying \eqref{eq:constB}. 
  Let $t_0 \in (\al,\be)$ and $k \in \{2,\ldots,n\}$ be such that \eqref{k} holds.
  Let $I$ be an open interval with $t_0 \in I \subseteq (\al,\be)$ and satisfying \eqref{assumption<=}. 
  Then the assumptions \thetag{1}--\thetag{4} of Proposition \ref{induction} are fulfilled. 
\end{lemma}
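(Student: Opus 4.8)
The plan is to verify conditions \thetag{1}--\thetag{4} of Proposition~\ref{induction} (with $n_{\tilde a}=n$) one by one, feeding the hypotheses of the lemma into the estimates already established in Sections~\ref{aestimates}, \ref{bestimates}, and \ref{universal}. Condition \thetag{1} is literally the hypothesis \eqref{k}. For condition \thetag{2}, I would drop the nonnegative term $\const\,|I|$ from the left-hand side of \eqref{assumption<=}, obtaining $\sum_{j=2}^{n}\|(\tilde a_j^{1/j})'\|_{L^1(I)}\le B\,|\tilde a_k(t_0)|^{1/k}$, and note that $B<1/3$ by \eqref{eq:constB}; this is exactly \eqref{assumpt}, so in particular the conclusions of Section~\ref{aestimates} (Lemmas~\ref{lem1} and \ref{lem2}) all apply. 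Condition \thetag{3} is then immediate from Lemma~\ref{bounds2}, whose hypotheses \eqref{eq:k} and \eqref{assumption} are precisely \eqref{k} and \eqref{assumption<=}, and whose constant is $C(n)$ as required.

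The only point requiring real work is condition \thetag{4}, the existence of the splitting on $I$. I would look at the curve $\ul a\colon I\to\C^{n-1}$ from \eqref{curve} and its value $\ul p:=\ul a(t_0)$. Since $|\tilde a_j(t_0)|^{1/j}\le|\tilde a_k(t_0)|^{1/k}$ for all $j$ by \eqref{k}, one has $\ul a_k(t_0)=1$ and $|\ul a_j(t_0)|\le 1$ for $j\ne k$, so $\ul p$ lies in the compact set $K$ of \eqref{eq:compactK}. By the defining property of the universal splitting fixed in Section~\ref{universal}, the ball $B_\rh(\ul p)$ is contained in some ball $B\in\cB$ on which a splitting $P_{\tilde a}=P_bP_{b^*}$ together with the analytic functions $\ps_i$ has been fixed. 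On the other hand, Lemma~\ref{lem2} bounds the length of $\ul a|_I$ by $3n^2\,2^{n}B$, which is $<\rh$ by the choice \eqref{eq:constB} of $B$; since $\ul a|_I$ starts at $\ul p$ and arc length dominates Euclidean distance, the whole curve stays inside $B_\rh(\ul p)\subseteq B$. Hence $P_{\tilde a}$ splits on $I$ with coefficients $b_i$ (and $b_i^*$) given by \eqref{eq:bj}, i.e.\ \thetag{4}; this also makes the conclusions of Section~\ref{bestimates} available for $P_{\tilde b}$ on $I$.

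I expect the only subtlety to be this last step: the bookkeeping of which earlier hypothesis enters which earlier lemma, together with the geometric fact that the rescaled coefficient curve $\ul a|_I$ cannot leave a single chart of the universal splitting. The smallness condition \eqref{eq:constB} on $B$ was precisely engineered so that the length bound of Lemma~\ref{lem2} stays below the radius $\rh$, which is exactly what makes this work; once that is in place, everything else is a direct invocation of the preceding results.
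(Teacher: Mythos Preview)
Your proof is correct and follows essentially the same route as the paper: the paper's Step~1 verifies \thetag{1}--\thetag{3} by invoking the conclusions of Section~\ref{aestimates} (in particular Lemma~\ref{bounds2}) and obtains \thetag{4} by exactly the argument you give, namely that $\ul a(t_0)\in K$ by \eqref{k}, that $B_\rh(\ul a(t_0))$ sits inside some ball of the fixed cover $\cB$, and that the length bound of Lemma~\ref{lem2} together with \eqref{eq:constB} forces $\ul a|_I$ to stay in that ball. One tiny phrasing point: $t_0$ need not be an endpoint of $I$, so $\ul a|_I$ does not literally ``start'' at $\ul p$, but your arc-length argument still gives $|\ul a(t)-\ul p|\le \text{length}(\ul a|_I)<\rh$ for every $t\in I$, which is all that is needed.
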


\subsection*{Step 2: Induction on the degree}

Let us prove Proposition \ref{induction}.

We proceed by induction on the degree $n = n_{\tilde a}$.  
The assumptions of the proposition amount exactly to the assumptions \eqref{eq:polynomial}--\eqref{assumpt}, \eqref{est:a}, and 
\eqref{splitting}--\eqref{eq:b_i}. Thus we may rely on all conclusions of Sections \ref{aestimates} and \ref{bestimates}.

\subsubsection*{Induction basis} 

Proposition \ref{induction} trivially holds for 
polynomials of degree $1$. 
Using the result of Ghisi and Gobbino, i.e., Corollary \ref{cor:radicals}, 
one can also check that Proposition~\ref{induction} is valid 
for polynomials of the form $P_{\tilde a}(Z) = Z^n - \tilde a_n$, $n\ge 2$, because they 
can be split into the product of linear factors 
$P_{\tilde a}(Z) = \prod_{\xi^n=1}  (Z -  \xi \tilde a_n^{1/n})$. 
But we do not need 
to consider this case separately, since it will appear implicitly in the inductive step.

\subsubsection*{Inductive step} 

By \eqref{eq:ass11}, $\tilde a_k$ does not vanish on $I$, and thus $b_i$ and $\tilde b_i$ belong to $C^{n-1,1}(\oI)$.
Let us set 
\[
  I' := I \setminus \{t \in I : \tilde b_2 (t) = \cdots = \tilde b_{n_b}(t) = 0\}.
\]
For each $t_1 \in I'$ choose $\ell \in \{2, \ldots, n_b\}$ such that \eqref{eq:ell} holds. 
By Section \ref{subintervals}, there is an open interval $J = J(t_1)$, $t_1 \in J \subseteq I'$, 
such that \eqref{assumption2a}.
The constant $D$ in \eqref{assumption2a} 
can be chosen sufficiently small such that on $J$ we have a splitting 
\begin{equation*}
  P_{\tilde b}(t) = P_c(t) P_{c^*}(t), \quad t \in J;
\end{equation*}
in fact, it suffices to choose 
\begin{equation} \label{D}
  D < \min\Big\{\frac 1 3, \frac{\si}{3 n_b^2 2^{n_b}}, C_1^{-1}\Big\}, 
\end{equation}
where $C_1$ is the constant in \eqref{eq:b_ibound} and where $\si$ is the radius of the universal splitting of polynomials 
of degree $n_b$ in Tschirnhausen form.
Indeed, the length of the curve $\underline b|_J$ is bounded by $\si$, which follows 
from Lemma~\ref{lemB}, 
and the arguments in Section \ref{universal} and in Step~1 applied to $P_{\tilde b}$.

By Proposition \ref{cover} (where \eqref{assumption2a} plays the role of \eqref{assumption2aG}), 
we may conclude that there is a countable family $\{(J_\ga,t_\ga,\ell_\ga)\}$ of open intervals $J_\ga \subseteq I'$, 
of points $t_\ga \in J_\ga$, 
and of integers $\ell_\ga \in \{2, \ldots, n_b\}$ 
satisfying
\begin{gather}
    |\tilde b_{\ell_\ga}(t_\ga)|^{1/{\ell_\ga}} = \max_{2 \le i \le n_b} |\tilde b_i(t_\ga)|^{1/i} \ne 0, \\
    |J_\ga|  |I|^{-1}  {|\tilde a_k(t_0)|^{1/k}}  + \sum_{i=2}^{n_b} \|(\tilde b_i^{1/i})'\|_{L^1 (J_\ga)} =  
    D |\tilde b_{\ell_\ga}(t_\ga)|^{1/\ell_\ga}, \label{Jba}\\ 
    P_{\tilde b}(t) = P_{c_\ga}(t) P_{c_\ga^*}(t), \quad t \in J_\ga, \label{Pb}\\
    \bigcup_\ga J_\ga = I', \quad \sum_\ga |J_\ga| \le 2 |I'|. \label{Jga}
\end{gather}
In particular, for every $\ga$, the polynomial $P_{\tilde b}(t) = P_{c_\ga}(t) P_{c_\ga^*}(t)$, $t \in J_\ga$, satisfies the 
assumptions of Proposition \ref{induction}; note that (3) in Proposition \ref{induction} corresponds to \eqref{b4}.

Let $\mu \in C^0(I)$ be a continuous root of $P_{\tilde b}$. 
We may assume without loss of generality that in $J_\ga$, 
\begin{equation} \label{tildemu}
  \tilde \mu(t) := \mu(t) + \frac{c_{\ga 1}(t)}{n_{c_\ga}}, \quad t \in J_\ga,
\end{equation}
is a root of $P_{\tilde c_\ga}$, where $n_{c_\ga} := \deg P_{c_\ga}$. 
Since $n_{c_\ga} < n_{b} < n_{\tilde a}$, the induction hypothesis implies that
$\tilde \mu$ is absolutely continuous and satisfies 
\begin{equation} \label{mu}
    \|\tilde \mu'\|_{L^p(J_\ga)} \le C \Big( \| |J_\ga|^{-1}  {|\tilde b_{\ell_\ga}(t_\ga)|^{1/\ell_\ga}} \|_{L^p (J_\ga)} 
  + \sum_{h=2}^{n_{c_\ga}} \|(\tilde c_{\ga h}^{1/h})'\|_{L^p (J_\ga)}\Big),
\end{equation}
for all $1 \le p < (n_b)'$, for a constant $C$ depending only on $n_b$ and $p$.

\subsubsection*{\texorpdfstring{$L^p$}{Lp}-estimates on \texorpdfstring{$I$}{I}}

To finish the proof of Proposition \ref{induction} we have to show that the estimates \eqref{mu} 
on the subintervals $J_\ga$
imply the bound \eqref{muab} on $I$. 
To this end we claim that, 
for all $p$ with $1 \le p < (n_{c_{\ga}})'$, 
\begin{equation} \label{eq:ctob}
  \sum_{h=2}^{n_{c_\ga}} \|(\tilde c_{\ga h}^{1/h})'\|^*_{L^p(J_\ga)} 
  \le C  |J_\ga|^{-1} |\tilde b_{\ell_\ga} (t_\ga)|^{1/\ell_\ga}, 
\end{equation}
for a constant $C$ that depends only on $n_{\tilde a}$ and $p$.

By the properties of the universal splitting (cf.\ Sections \ref{ssec:split} and \ref{universal}), 
the coefficients $c_{\ga h}$ of $P_{c_\ga}$ are of the form 
\[
  c_{\ga h} = \tilde b_{\ell_\ga}^{h/\ell_\ga} 
  \th_h \big(\tilde b_{\ell_\ga}^{-2/\ell_\ga} \tilde b_2, \ldots, 
  \tilde b_{\ell_\ga}^{-n_b/\ell_\ga} \tilde b_{n_b}\big), 
  \quad h = 1,\ldots, n_{c_\ga},
\]  
and after the Tschirnhausen transformation $P_{c_{\ga}} \leadsto P_{\tilde c_\ga}$, see \eqref{eq:tildebj}, 
\[
  \tilde c_{\ga h} = \tilde b_{\ell_\ga}^{h/\ell_\ga} 
  \tilde \th_h \big(\tilde b_{\ell_\ga}^{-2/\ell_\ga} \tilde b_2, \ldots, 
  \tilde b_{\ell_\ga}^{-n_b/\ell_\ga} \tilde b_{n_b}\big), 
  \quad h = 2,\ldots, n_{c_\ga},
\] 
where $\th_h$, respectively, $\tilde \th_h$, are analytic functions with bounded partial derivatives of all orders. 
By \eqref{b2}, $\tilde b_{\ell_\ga}$ does not vanish on $J_\ga$ and thus $c_{\ga h}$ and $\tilde c_{\ga h}$ belong to 
$C^{n_{\tilde a}-1,1}(\overline J_\ga)$.
By Lemma \ref{lem:B} (applied to $\tilde c_{\ga h}$, $J_\ga$, $|\tilde b_{\ell_\ga} (t_\ga)|^{1/\ell_\ga}$ instead of 
$\tilde b_i$, $I$, $|\tilde a_k(t_0|^{1/k}$), 
we find that, for $h = 2,\ldots, n_{c_\ga}$ and $s=1,\dots,n_{\tilde a}-1$,
\begin{align*}
  \|\tilde c_{\ga h}^{(s)} \|_{L^\infty(J_\ga)} &\le C  |J_\ga|^{-s}  |\tilde b_{\ell_\ga} (t_\ga)|^{h/\ell_\ga},
  \\
  \Lip_{J_\ga} (\tilde c_{\ga h}^{(n_{\tilde a}-1)} ) 
  &\le C  |J_\ga|^{-n_{\tilde a}}  |\tilde b_{\ell_\ga} (t_\ga)|^{h/\ell_\ga},    
\end{align*}    
where $C=C(n_{\tilde a})$.
Then Lemma \ref{lem:Lpbak} yields \eqref{eq:ctob}.

Now \eqref{eq:ctob}, \eqref{Jba}, and \eqref{inclusions0} allow us to estimate the right-hand side of 
\eqref{mu}:
\begin{align*}
\MoveEqLeft
  \||J_\ga|^{-1} |\tilde b_{\ell_\ga}(t_\ga)|^{1/\ell_\ga} \|^*_{L^p (J_\ga)}
  + \sum_{h=2}^{n_{c_\ga}} \|(\tilde c_{\ga h}^{1/h})'\|^*_{L^p (J_\ga)}
  \\
  &\le (1+C) |J_\ga|^{-1} |\tilde b_{\ell_\ga}(t_\ga)|^{1/\ell_\ga}
  \\
  &= (1+C) D^{-1} \Big( \| |I|^{-1}  {|\tilde a_k(t_0)|^{1/k}} \|^*_{L^1 (J_\ga)} 
  + \sum_{i=2}^{n_b} \|(\tilde b_i^{1/i})'\|^*_{L^1 (J_\ga)}\Big)
  \\
  &\le (1+C)   D^{-1}   \Big( \| |I|^{-1}  {|\tilde a_k(t_0)|^{1/k}} \|^*_{L^p (J_\ga)} 
  + \sum_{i=2}^{n_b} \|(\tilde b_i^{1/i})'\|^*_{L^p (J_\ga)}\Big) 
\end{align*}
and therefore 
\begin{align}
\MoveEqLeft
  \||J_\ga|^{-1} |\tilde b_{\ell_\ga}(t_\ga)|^{1/\ell_\ga} \|^p_{L^p (J_\ga)}
  + \sum_{h=2}^{n_{c_\ga}} \|(\tilde c_{\ga h}^{1/h})'\|^p_{L^p (J_\ga)}
  \notag \\
  &\le C D^{-p}  \Big( \| |I|^{-1}  {|\tilde a_k(t_0)|^{1/k}} \|^p_{L^p (J_\ga)} 
  + \sum_{i=2}^{n_b} \|(\tilde b_i^{1/i})'\|^p_{L^p (J_\ga)}\Big), \label{cba}
\end{align}
for a constant $C$ that depends only on $n_{\tilde a}$ and $p$.

By Remark \ref{rem:b1prime} (applied to $\tilde c_{\ga h}$, $J_\ga$, $|\tilde b_{\ell_\ga} (t_\ga)|^{1/\ell_\ga}$ instead of 
$\tilde b_i$, $I$, $|\tilde a_k(t_0|^{1/k}$), we have
\begin{align*}  
    \|c_{\ga 1}'\|_{L^\infty(J_\ga)}   \le C  |J_\ga|^{-1}  |\tilde b_{\ell_\ga} (t_\ga)|^{1/\ell_\ga},
\end{align*} 
where $C=C(n_{\tilde a})$.
Thus, using \eqref{Jba} and \eqref{inclusions0}, we find (as in the derivation of \eqref{cba})
\begin{align}  \label{c1prime}
    \|c_{\ga 1}'\|^p_{L^p(J_\ga)} \le C D^{-p}  \Big( \| |I|^{-1}  {|\tilde a_k(t_0)|^{1/k}} \|^p_{L^p (J_\ga)} 
  + \sum_{i=2}^{n_b} \|(\tilde b_i^{1/i})'\|^p_{L^p (J_\ga)}\Big),
\end{align}
for a constant $C$ that depends only on $n_{\tilde a}$ and $p$.

Let us now glue the bounds on $J_\ga$ to a bound on $I$.
By \eqref{Jga}, \eqref{mu}, \eqref{cba}, and \eqref{c1prime},
\begin{align} \label{ind11}
  \sum_\ga \|\tilde \mu'\|^p_{L^p(J_\ga)} 
  &\le  C D^{-p}  \Big( \| |I|^{-1}  {|\tilde a_k(t_0)|^{1/k}} \|^p_{L^p (I)} 
  + \sum_{i=2}^{n_b} \|(\tilde b_i^{1/i})'\|^p_{L^p (I)}\Big), 
\end{align}
and
\begin{align} \label{ind22}
  \sum_\ga \|c_{\ga 1}'\|^p_{L^p(J_\ga)} 
  \le C D^{-p}  \Big( \| |I|^{-1}  {|\tilde a_k(t_0)|^{1/k}} \|^p_{L^p (I)} 
  + \sum_{i=2}^{n_b} \|(\tilde b_i^{1/i})'\|^p_{L^p (I)}\Big), 
\end{align}
for a constant $C$ that depends only on $n_{\tilde a}$ and $p$.
By \eqref{Jga}, \eqref{tildemu}, \eqref{ind11}, and \eqref{ind22}, 
we may conclude that $\mu$ is absolutely continuous on $I'$ 
and 
\begin{align*}
  \|\mu'\|_{L^p(I')} 
  \le C D^{-1}  \Big( \| |I|^{-1}  {|\tilde a_k(t_0)|^{1/k}} \|_{L^p (I)} 
  + \sum_{i=2}^{n_b} \|(\tilde b_i^{1/i})'\|_{L^p (I)}\Big),
\end{align*}
for a constant $C$ that depends only on $n_{\tilde a}$ and $p$.
Since $\mu$ vanishes on $I \setminus I'$, Lemma~\ref{lem:extend} implies that $\mu$ is absolutely continuous on $I$ and  
satisfies \eqref{muab}, since $D = D(n_{\tilde a})$ by \eqref{D}. 
This completes the proof of Proposition \ref{induction}.

\subsection*{Step 3: End of the proof of Theorem \ref{main}}

We have seen in Lemma \ref{lem:assThm1implyassProp3} that for a polynomial $P_{\tilde a}$ in Tschirnhausen form 
\eqref{polynomial}
satisfying \eqref{k} and \eqref{assumption<=}
the assumptions of Proposition~\ref{induction} hold with the constant $B$ 
fulfilling \eqref{eq:constB}. 
Our next goal is to  estimate the right-hand side of \eqref{muab} in terms of the $\tilde a_j$.

By Lemma \ref{lem:B}, we have \eqref{eq:b_ider}, and thus, by Lemma \ref{lem:Lpbak},
we get for all $p$ with $1 \le p < (n_b)'$, 
\begin{align} \label{beforecases}
  \||I|^{-1} |\tilde a_k (t_0)|^{1/k} \|^*_{L^p (I)} + \sum_{i=2}^{n_b} \|(\tilde b_i^{1/i})'\|^*_{L^p(I)} 
  \le C |I|^{-1} |\tilde a_k (t_0)|^{1/k}
\end{align}
where the constant $C$ depends only on $n$ and $p$.

At this stage two cases may occur:
\begin{enumerate}
  \item[(i)] Either we have equality in \eqref{assumption<=}, i.e., 
  \begin{align}\label{assumption=}
    \const |I| + \sum_{j=2}^n \|(\tilde a_j^{1/j})'\|_{L^1 (I)} =  B |\tilde a_k(t_0)|^{1/k} .
  \end{align}
  \item[(ii)] Or $I = (\al,\be)$ and 
  \begin{align}\label{assumption<}
    \const |I| + \sum_{j=2}^n \|(\tilde a_j^{1/j})'\|_{L^1 (I)} <  B |\tilde a_k(t_0)|^{1/k} .
  \end{align}
\end{enumerate}
Case (ii) entails an unpleasant blow-up of the bounds if $\be - \al \to 0$ as explained in the following remark. 
We will explain below how to avoid this phenomenon.

\begin{remark} \label{rem:blowup}
In Case (ii) we have a splitting  $P_{\tilde a} = P_b P_{b^*}$ on the whole interval $I=(\al,\be)$; cf.\ 
Step 1. Thus, \eqref{beforecases} becomes 
\begin{align*}
  \||(\be-\al)^{-1} |\tilde a_k (t_0)|^{1/k} \|_{L^p ((\al,\be))} 
  + \sum_{i=2}^{n_b} \|(\tilde b_i^{1/i})'\|_{L^p((\al,\be))} 
  &\le C  (\be-\al)^{-1+1/p} |\tilde a_k(t_0)|^{1/k}
\end{align*}
which can be bounded by 
\begin{align} \label{eq:blowup}
  C  (\be-\al)^{-1+1/p} \max_{2 \le j \le n} \|\tilde a_j\|^{1/j}_{L^\infty((\al,\be))}.
\end{align}
Similarly, \eqref{b1prime} implies that $\|b_1'\|_{L^p((\al,\be))}$ is bounded by \eqref{eq:blowup}.
If $\la \in C^0((\al,\be))$ is a continuous root of $P_{\tilde a}$ then we may assume that it is a root of $P_b$, 
and hence $\la = \mu - b_1/n_b$, for a continuous root $\mu \in C^0((\al,\be))$ of $P_{\tilde b}$.  
By \eqref{muab}, we may conclude that $\la$ is absolutely continuous on $(\al,\be)$ and 
\begin{align} \label{lacase2}
  \|\la'\|_{L^p((\al,\be))} \le C (\be-\al)^{-1+1/p} \max_{2 \le j \le n} \|\tilde a_j\|^{1/j}_{L^\infty((\al,\be))},
\end{align}
where $C = C(n,p)$. 
But the bound for $\|\la'\|_{L^p((\al,\be))}$ in \eqref{lacase2} tends to infinity if $\be - \al \to 0$ unless $p=1$. 
\end{remark}

The next lemma provides a way to enforce Case (i). 

\begin{lemma} \label{lem:whitney}
  Let $-\infty<\al<\be<\infty$.
  Let $\tilde a_j \in C^{n-1,1}([\al,\be])$, for $2,\ldots,n$. Let $\hat\al := \al-1$ and $\hat \be := \be +1$.
  The functions $\tilde a_j$ can be extended 
  to functions, again denoted by $\tilde a_j$, 
  defined on $(\hat\al,\hat \be)$ such that the following holds. We have 
  \begin{equation} \label{whitney}
    \|\tilde a_j\|_{C^{n-1,1}([\hat\al,\hat \be])} \le C\, \|\tilde a_j\|_{C^{n-1,1}([\al,\be])},
\end{equation}
for some universal constant $C$ independent of $(\al,\be)$. 
For each $t_0 \in (\hat\al,\hat \be)$ and $k \in \{2,\ldots,n\}$ satisfying \eqref{k} 
  there is an open interval $I \subseteq (\hat\al,\hat \be)$ containing $t_0$ such that \eqref{assumption=} holds true 
  with $B$ specified in \eqref{eq:constB} and $M$ defined in \eqref{M}.
\end{lemma}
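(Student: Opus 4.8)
The plan is to first extend each $\tilde a_j$ from $[\al,\be]$ to $[\hat\al,\hat\be]$ by a standard Whitney-type extension operator (e.g.\ Seeley's extension, or a reflection-and-cutoff construction) so that \eqref{whitney} holds with a constant depending only on $n$. This is routine; the only point worth noting is that the extension can be chosen linear in $\tilde a_j$, so the $C^{n-1,1}$-norm on the larger interval is controlled by the norm on $[\al,\be]$, uniformly in $(\al,\be)$ because the added collar $[\hat\al,\al]\cup[\be,\hat\be]$ has fixed length $1$ on each side. After this, fix $t_0\in(\hat\al,\hat\be)$ and $k\in\{2,\dots,n\}$ with \eqref{k}, and set $\const$ as in \eqref{M}, using the norm on $[\hat\al,\hat\be]$ (so $\const$ is finite and fixed once the extension is fixed).

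The heart of the argument is to produce the interval $I$ giving \emph{equality} in \eqref{assumption=}. Consider, for $s\ge t_0$ with $[t_0,s]\subseteq(\hat\al,\hat\be)$, the function
\[
  \Phi_+(s) := \const\,(s-t_0) + \sum_{j=2}^n \|(\tilde a_j^{1/j})'\|_{L^1([t_0,s])},
\]
and the symmetric $\Phi_-$ for $s\le t_0$. Each $\tilde a_j^{1/j}$ has an absolutely continuous continuous selection by Corollary \ref{cor:radicals}, so $\Phi_\pm$ are continuous, nondecreasing in $\pm(s-t_0)$, and vanish at $t_0$. I would grow them symmetrically: let them increase until $\Phi_-(s_-)+\Phi_+(s_+)=B|\tilde a_k(t_0)|^{1/k}$, taking $\Phi_-(s_-)=\Phi_+(s_+)=\tfrac B2|\tilde a_k(t_0)|^{1/k}$ whenever possible, and set $I=(s_-,s_+)$. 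The only obstruction to achieving this equality is that one of $\Phi_\pm$ might hit the endpoint $\hat\al$ or $\hat\be$ before reaching the target value. This is where the collar of width $1$ on each side, together with the forced vanishing, is used: I claim that if $s$ runs all the way to $\hat\be$ (so the collar $[\be,\hat\be]$ is fully inside $[t_0,s]$) then $\Phi_+(\hat\be)$ is already $\ge B|\tilde a_k(t_0)|^{1/k}$, which forces the equality to be attained strictly inside $(\hat\al,\hat\be)$. To see this, one uses that the extension was built so that $\tilde a_j$ vanishes (to order $n-1$, say, or at least $\tilde a_j^{1/j}\to 0$) at $\hat\al$ and $\hat\be$; hence
\[
  \sum_{j=2}^n \|(\tilde a_j^{1/j})'\|_{L^1([t_0,\hat\be])} \ge \sum_{j=2}^n |\tilde a_j(t_0)|^{1/j} \ge |\tilde a_k(t_0)|^{1/k} > B|\tilde a_k(t_0)|^{1/k},
\]
using $B<1/3<1$ and the triangle inequality along the path from $t_0$ to $\hat\be$ where the selection of $\tilde a_j^{1/j}$ drops from $|\tilde a_j(t_0)|^{1/j}$ to $0$. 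The symmetric estimate holds at $\hat\al$. Therefore the symmetric growth procedure always terminates with equality \eqref{assumption=}, and $I\subseteq(\hat\al,\hat\be)$ with $t_0\in I$ as required.

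The main obstacle I anticipate is arranging the extension so that simultaneously (a) \eqref{whitney} holds with a constant independent of $(\al,\be)$, and (b) each $\tilde a_j$ (hence each continuous selection of $\tilde a_j^{1/j}$) genuinely vanishes at $\hat\al$ and $\hat\be$, so that the $L^1$-norm lower bound above is valid. One clean way: first extend $\tilde a_j$ across $\al$ and $\be$ by a bounded $C^{n-1,1}$-extension operator to a slightly larger interval, then multiply by a fixed $C^\infty$ cutoff $\chi$ which is $\equiv 1$ on $[\al,\be]$ and supported in $(\hat\al,\hat\be)$ with $\chi$ vanishing to high order at $\hat\al,\hat\be$; Leibniz's rule gives \eqref{whitney} with a constant depending only on $n$ and $\chi$, and the product vanishes at $\hat\al,\hat\be$. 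One then checks that the selection of $(\chi\tilde a_j)^{1/j}$ can be taken continuous up to the endpoints and vanishing there, which is immediate since $\chi\tilde a_j$ vanishes there. With these properties in hand, the growth argument of the previous paragraph applies verbatim, and $M$ in \eqref{M} is computed with respect to $[\hat\al,\hat\be]$, finishing the proof.
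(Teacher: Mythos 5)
Your approach is essentially the same as the paper's: a Whitney-type extension followed by a fixed smooth cutoff that forces each extended $\tilde a_j$ to vanish at $\hat\al$ and $\hat\be$, and then the key observation that, since any continuous selection $\tilde a_j^{1/j}$ also vanishes there,
\begin{equation*}
  \sum_{j=2}^n \|(\tilde a_j^{1/j})'\|_{L^1([t_0,\hat\be])} \ge |\tilde a_k(t_0)|^{1/k} > B\,|\tilde a_k(t_0)|^{1/k},
\end{equation*}
so the growth procedure cannot run out of room and must attain equality strictly inside $(\hat\al,\hat\be)$. The paper records the same lower bound as \eqref{sufficient} and observes that, because $B<1$, it forces Case~(i) of Step~3 for every $t_0$; you phrase the identical mechanism through the continuous nondecreasing functions $\Phi_\pm$. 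The content is the same.

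There is one discrepancy to flag. You propose to compute $\const$ using $\Lip_{[\hat\al,\hat\be]}$ rather than $\Lip_I$ as prescribed in \eqref{M}, so the quantity you achieve equality with is a larger, $I$-independent constant. Since $\Lip_I(\tilde a_j^{(n-1)}) \le \Lip_{[\hat\al,\hat\be]}(\tilde a_j^{(n-1)})$, your equality implies only the inequality \eqref{assumption<=} for the $\const$ of \eqref{M}, not the equality \eqref{assumption=} as literally stated (and running the growth against the $I$-dependent $\const$ is what the lemma and the paper actually use). This does not damage the downstream application in Lemma~\ref{lem:local}, where $\const$ is in any case discarded in favour of the bound $\const \le \hat A$ from \eqref{tildeA}, which dominates both versions; but if you retain the fixed-constant variant you should state explicitly that you are proving \eqref{assumption=} with $\Lip_{[\hat\al,\hat\be]}$ in place of $\Lip_I$, rather than the statement as written.
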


\begin{proof}
Using a simple version of Whitney's extension theorem (cf.\ \cite[Theorem 4, p.177]{Stein70}), we may extend the functions 
$\tilde a_j \in C^{n-1,1}([\al,\be])$ to functions, again denoted by $\tilde a_j$, 
defined on $\R$ such that $\tilde a_j, \tilde a_j',\ldots, \tilde a_j^{(n-1)}$ are continuous and bounded on $\R$ and 
$\Lip_\R (\tilde a_j^{(n-1)}) < \infty$. More precisely, 
\begin{equation*} 
  \max_{0 \le i \le n-1} \|\tilde a_j^{(i)}\|_{L^\infty(\R)} + \Lip_{\R}(\tilde a_j^{(n-1)}) 
  \le C\, \|\tilde a_j\|_{C^{n-1,1}([\al,\be])},
\end{equation*}
for some universal constant $C$ independent of $(\al,\be)$. Choose a smooth function $\vh : \R \to [0,1]$ such that 
$\vh(t) = 1$ for $t \le 0$ and $\vh(t) = 0$ for $t \ge 1$. Then 
\[
  \ps(t) := \vh(\al - t) \vh(t-\be), \quad t \in \R,
\]
is a smooth function which is $1$ on the interval $[\al,\be]$ and $0$ outside the interval $[\hat \al, \hat \be]$.  
By multiplying all functions $\tilde a_j$ with the cut-off function $\ps$, we may assume that each 
$\tilde a_j$ vanishes somewhere in $[\hat\al,\hat \be]$ and the Leibniz formula implies \eqref{whitney} 
for a constant $C$ depending only on $\vh$.

If there is a point $s = s(j) \in [\hat\al,\hat \be]$ such that $\tilde a_j(s) =0$, then, for $t \in [\hat\al,\hat \be]$, 
  \[
    |\tilde a_j^{1/j}(t)| = |\int_s^t (\tilde a_j^{1/j})'\, d \ta| \le \|(\tilde a_j^{1/j})'\|_{L^1((\hat\al,\hat \be))}
  \]
  and hence 
  \begin{equation} \label{sufficient}
      \max_{2 \le j \le n} \|\tilde a_j\|_{L^\infty((\hat\al,\hat \be))}^{1/j} 
      \le \sum_{j=2}^n \|(\tilde a_j^{1/j})'\|_{L^1((\hat\al,\hat \be))}. 
  \end{equation}
  Since $B<1$ (by \eqref{eq:constB}), \eqref{sufficient} enforces Case (i): 
  for each $t_0 \in (\hat\al,\hat \be)$ and $k \in \{2,\ldots,n\}$ satisfying \eqref{k} 
  there is an open interval $I \subseteq (\hat\al,\hat \be)$ containing $t_0$ such that \eqref{assumption=} holds true.   
\end{proof}

\begin{lemma} \label{lem:local}
  Let $P_{\tilde a}$ be a monic polynomial of degree $n$ in Tschirnhausen form \eqref{polynomial}  with coefficients of class 
  $C^{n-1,1}([\hat \al,\hat \be])$. 
  Let $t_0 \in (\hat \al,\hat \be)$, $k \in \{2,\cdots, n\}$, and let $I \subseteq (\hat \al,\hat \be)$ be an 
  open interval containing $t_0$ such that \eqref{k} and \eqref{assumption=} hold with 
  the constant $B$ 
  fulfilling \eqref{eq:constB} and $\const$ defined by \eqref{M}. 
  Then any continuous root $\la \in C^0(I)$ of $P_{\tilde a}$ on $I$ is absolutely continuous on $I$ and satisfies 
  \begin{align} \label{laIA}
  \|\la'\|_{L^p(I)} \le C   \Big(\hat A \|1\|_{L^p(I)} + \sum_{j=2}^n \|(\tilde a_j^{1/j})'\|_{L^p (I)}\Big),
  \end{align}
  where
  \begin{equation} \label{tildeA}
   \hat A := \max_{2 \le j \le n} \|\tilde a_j\|^{1/j}_{C^{n-1,1}([\hat\al,\hat \be])}.
\end{equation}
\end{lemma}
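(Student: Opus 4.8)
The plan is to reduce to Proposition~\ref{induction} via the universal splitting, bound $\|\la'\|_{L^p(I)}$ by $C|I|^{-1+1/p}|\tilde a_k(t_0)|^{1/k}$ using \eqref{beforecases} and \eqref{b1prime}, and then turn this ``geometric'' bound into the right-hand side of \eqref{laIA} by means of the \emph{equality} \eqref{assumption=}. Throughout, $p$ denotes an exponent with $1\le p<n/(n-1)$ and all constants $C$ depend only on $n$ and $p$.

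First, since \eqref{assumption=} implies \eqref{assumption<=}, Lemma~\ref{lem:assThm1implyassProp3} (applied with $(\hat\al,\hat\be)$ in place of $(\al,\be)$) shows that the hypotheses \thetag{1}--\thetag{4} of Proposition~\ref{induction} hold for $P_{\tilde a}$ on $I$; in particular $P_{\tilde a}$ splits, $P_{\tilde a}=P_bP_{b^*}$ on $I$. A continuous root $\la$ of $P_{\tilde a}$ is a root of one of the two factors, say $P_b$, and then $\mu:=\la+b_1/n_b$ is a continuous root of $P_{\tilde b}$. Since $n_{\tilde a}=n$, Proposition~\ref{induction} gives that $\mu$ is absolutely continuous with
\[
  \|\mu'\|_{L^p(I)}\le C\Bigl(\bigl\||I|^{-1}|\tilde a_k(t_0)|^{1/k}\bigr\|_{L^p(I)}+\sum_{i=2}^{n_b}\|(\tilde b_i^{1/i})'\|_{L^p(I)}\Bigr),
\]
and \eqref{b1prime} yields $\|b_1'\|_{L^p(I)}\le|I|^{1/p}\|b_1'\|_{L^\infty(I)}\le C|I|^{-1+1/p}|\tilde a_k(t_0)|^{1/k}$. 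Hence $\la=\mu-b_1/n_b$ is absolutely continuous and satisfies the same type of bound. As $n_b<n$ we have $n/(n-1)<(n_b)'$, so \eqref{beforecases} is available; it bounds the right-hand side of the displayed estimate for $\mu$ by $C|I|^{-1+1/p}|\tilde a_k(t_0)|^{1/k}$. Altogether $\|\la'\|_{L^p(I)}\le C|I|^{-1+1/p}|\tilde a_k(t_0)|^{1/k}$.

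It remains to estimate $|I|^{-1+1/p}|\tilde a_k(t_0)|^{1/k}$ in terms of the data, and this is precisely where the equality in \eqref{assumption=} is essential: it gives $B|\tilde a_k(t_0)|^{1/k}=\const|I|+\sum_{j=2}^n\|(\tilde a_j^{1/j})'\|_{L^1(I)}$, hence
\[
  |I|^{-1+1/p}|\tilde a_k(t_0)|^{1/k}=B^{-1}\Bigl(\const\,|I|^{1/p}+|I|^{-1+1/p}\sum_{j=2}^n\|(\tilde a_j^{1/j})'\|_{L^1(I)}\Bigr).
\]
For the first term I would observe that $\const\le\hat A$: since $I\subseteq(\hat\al,\hat\be)$, one has $\Lip_I(\tilde a_j^{(n-1)})\le\|\tilde a_j\|_{C^{n-1,1}([\hat\al,\hat\be])}\le\hat A^{\,j}$ and $|\tilde a_k(t_0)|^{1/k}\le\|\tilde a_k\|_{L^\infty([\hat\al,\hat\be])}^{1/k}\le\hat A$, so each factor in \eqref{M} satisfies $(\Lip_I(\tilde a_j^{(n-1)}))^{1/n}|\tilde a_k(t_0)|^{(n-j)/(kn)}\le\hat A^{\,j/n}\hat A^{\,(n-j)/n}=\hat A$; thus $\const\,|I|^{1/p}\le\hat A\,\|1\|_{L^p(I)}$. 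For the second term I would apply H\"older's inequality, $\|(\tilde a_j^{1/j})'\|_{L^1(I)}\le|I|^{1-1/p}\|(\tilde a_j^{1/j})'\|_{L^p(I)}$, which exactly cancels the prefactor $|I|^{-1+1/p}$. Since $B$ is the universal constant of \eqref{eq:constB}, combining the two displays gives \eqref{laIA}.

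The whole argument is an assembly of facts established earlier, chiefly Proposition~\ref{induction}; the one point that uses genuine content beyond the triangle inequality and H\"older's inequality is the passage through the equality \eqref{assumption=} — which is the very reason Lemma~\ref{lem:whitney} was arranged so as to force Case~(i). I expect the only mild obstacle to be the bookkeeping around the estimate $\const\le\hat A$ and the switch between $L^1$- and $L^p$-norms; no new idea is needed.
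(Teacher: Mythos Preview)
Your proof is correct and follows essentially the same route as the paper: apply Lemma~\ref{lem:assThm1implyassProp3} to feed into Proposition~\ref{induction}, combine \eqref{muab} with \eqref{beforecases} and \eqref{b1prime} to reach the intermediate bound $C|I|^{-1+1/p}|\tilde a_k(t_0)|^{1/k}$, then use the equality \eqref{assumption=} together with H\"older (equivalently \eqref{inclusions0}) and the estimate $\const\le\hat A$ to obtain \eqref{laIA}. The only cosmetic difference is that the paper first packages the passage through \eqref{assumption=} as the intermediate inequality \eqref{Lpestimate} before invoking \eqref{muab}, whereas you invoke \eqref{muab} first and then unwind; the content is the same.
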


\begin{proof}
By Lemma \ref{lem:assThm1implyassProp3} (for $(\hat \al,\hat \be)$ instead of $(\al,\be)$), 
the assumptions of Proposition \ref{induction} are satisfied.
In particular, we have a splitting $P_{\tilde a}(t) = P_b(t) P_{b^*}(t)$ for $t \in I$. 
We may assume without loss of generality that $\la$  
is a root of $P_b$.
Then it has the form 
\begin{align} \label{lambda}
  \la(t) &= - \frac{b_1(t)}{n_b} + \mu(t), \quad t \in I, 
\end{align}
where $\mu$ is a continuous root of $P_{\tilde b}$. 
By Proposition \ref{induction}, $\mu$ is absolutely continuous on $I$ and satisfies \eqref{muab}.

  Using \eqref{assumption=} and \eqref{inclusions0} to estimate \eqref{beforecases} (as in the derivation of \eqref{cba}), 
we arrive at 
\begin{align}
  \MoveEqLeft
  \||I|^{-1} |a_k (t_0)|^{1/k} \|_{L^p (I)} + \sum_{i=2}^{n_b} \|(\tilde b_i^{1/i})'\|_{L^p(I)} 
  \le C    \Big(\const\|1\|_{L^p(I)} + \sum_{j=2}^n \|(\tilde a_j^{1/j})'\|_{L^p (I)}\Big), \label{Lpestimate} 
\end{align}
for a constant $C$ that depends only on $n$ and $p$; note that $B=B(n)$ by \eqref{eq:constB}.
Thus, by \eqref{muab} and \eqref{Lpestimate}, 
\begin{equation*} 
  \|\mu'\|_{L^p(I)} \le C  \Big(\const \|1\|_{L^p(I)} + \sum_{j=2}^n \|(\tilde a_j^{1/j})'\|_{L^p (I)}\Big).
\end{equation*}
By \eqref{inclusions0}, \eqref{b1prime}, \eqref{assumption=}, and \eqref{Lpestimate}, we have the same bound for 
$\|b_1'\|_{L^p(I)}$,
and, in view of \eqref{lambda}, we conclude that $\la$ is absolutely continuous on $I$ and satisfies 
\begin{align*} 
  \|\la'\|_{L^p(I)} \le C   \Big(\const \|1\|_{L^p(I)} + \sum_{j=2}^n \|(\tilde a_j^{1/j})'\|_{L^p (I)}\Big).
\end{align*}
The constant $\const$, defined in \eqref{M}, which depends on $t_0$ and $I$ can be bounded by $\hat A$ defined in 
\eqref{tildeA};
in fact,
\begin{align*}
  \const &= \max_{2 \le j\le n}  (\Lip_I(\tilde a_j^{(n-1)}))^{1/n} |\tilde a_k (t_0)|^{(n-j)/(kn)}
  \le \max_{2 \le j\le n} \hat A^{j/n} \hat A^{(n-j)/n} = \hat A.  
\end{align*}
This entails \eqref{laIA}.   
\end{proof}

\begin{proof}[Proof of Theorem \ref{main}]
Let $(\al,\be) \subseteq \R$ be a bounded open interval and let
\begin{equation} \label{polynomialbeforeTschirnhausen}
  P_{a(t)}(Z) = Z^n + \sum_{j=1}^n a_j(t) Z^{n-j}, \quad t \in (\al,\be),
\end{equation}
be a monic polynomial with coefficients $a_j \in C^{n-1,1}([\al,\be])$, $j = 1,\ldots,n$.

Without loss of generality we may assume that $n \ge 2$ and that $P_a = P_{\tilde a}$ is in Tschirnhausen form, 
i.e., $\tilde a_1=0$.
We shall see at the end of the proof how to get the bound \eqref{bound} from a corresponding bound involving the $\tilde a_j$. 
If $\{\la_j(t)\}_{j=1}^n$, $t \in (\al,\be)$, is any system of the roots of $P_{\tilde a}$ (not necessarily continuous), 
then, since $\tilde a_1=0$, for fixed $t \in (\al,\be)$,
\begin{equation} \label{zero}
  \forall_{i,j} ~\la_i(t) = \la_j(t) \quad \Longleftrightarrow \quad 
  \forall_i ~\la_i(t) =0 \quad \Longleftrightarrow \quad 
  \forall_i ~\tilde a_i(t) =0.
\end{equation}

  Let $\la \in C^0((\al,\be))$ be a continuous root of $P_{\tilde a}$. 
  We use Lemma \ref{lem:whitney} to extend $P_{\tilde a}$ to the interval $[\hat \al , \hat \be]$. 
  We extend $\la$ continuously to the interval $(\hat \al,\hat \be)$
  such that $P_{\tilde a(t)}(\la(t)) = 0$ for all $t \in (\hat \al,\hat \be)$.
  Then, by Lemma \ref{lem:local} and Proposition \ref{cover} (applied to $\tilde a_j$ instead of $\tilde b_i$ and 
  \eqref{assumption=} instead of \eqref{assumption2aG}), 
  we can cover the 
  complement in $(\hat\al,\hat \be)$ of the points $t$ satisfying \eqref{zero} 
by a countable family $\cI$ of open intervals $I$ on which \eqref{laIA} holds and such that 
$\sum_{I \in \cI} |I| \le 2 (\hat \be-\hat\al)$.
Since $\la$ vanishes on the points $t$ satisfying \eqref{zero}, Lemma~\ref{lem:extend} yields that 
$\la$ is absolutely continuous on $(\hat\al,\hat \be)$ and satisfies
\begin{equation*}
  \|\la'\|_{L^p((\hat\al,\hat \be))} \le  C   \Big(\hat A \|1\|_{L^p((\hat\al,\hat \be))} 
  + \sum_{j=2}^n \|(\tilde a_j^{1/j})'\|_{L^p ((\hat\al,\hat \be))}\Big),
\end{equation*}
and, using  \eqref{est}, we obtain   
\begin{equation*} 
  \|\la'\|_{L^p((\hat\al,\hat \be))} \le  C   \Big(\hat A (\hat \be-\hat\al)^{1/p} 
  + \sum_{j=2}^n \max \Big\{ (\Lip_{(\hat\al,\hat \be)}(\tilde a_j^{(j-1)}))^{1/j}  (\hat \be-\hat\al)^{1-1/j}, 
  \|\tilde a_j'\|^{1/j}_{L^\infty((\hat\al,\hat \be))}
   \Big\} \Big),
\end{equation*}
where $C = C(n,p)$.

Now let us restrict to the interval $(\al,\be)$ again, and set 
\begin{equation*} 
   \tilde A := \max_{2 \le j \le n} \|\tilde a_j\|^{1/j}_{C^{n-1,1}([\al, \be])}.
\end{equation*} 
By \eqref{whitney} and \eqref{tildeA}, we have
$\hat A \le   C \, \tilde A$
for a universal constant $C$. 
Moreover, 
$\hat \be -\hat \al = \be -\al + 2$ and $1 - 1/j < 1/p$ for all $j \le n$. 
Consequently, 
\begin{align} \label{lacase1}
  \|\la'\|_{L^p((\al, \be))} \le  C(n,p) \max\{1, (\be-\al)^{1/p} \}  \tilde A. 
\end{align}

Finally we determine the bound in terms of the $a_j$ (i.e., \emph{before} the Tschirnhausen transformation).
Let $\check \la := \la - a_1/n$, i.e., $\check \la$ is a continuous root of $P_a$, and set
\[
  A := \max_{1 \le j \le n} \|a_j\|^{1/j}_{C^{n-1,1}([\al,\be])}. 
\]
Thanks to the weighted homogeneity of the formulas \eqref{Tschirnhausen}, $\tilde A \le C(n) A$.
Thus, by \eqref{lacase1}, 
\begin{align*}
  \|\check \la'\|_{L^p((\al,\be))} 
  &\le \|\la'\|_{L^p((\al,\be))} +  \|a_1'\|_{L^p((\al,\be))}
  \\
  &\le C(n,p) \max\{1, (\be-\al)^{1/p} \}  \tilde A + (\be-\al)^{1/p} \|a_1'\|_{L^\infty((\al,\be))}  
  \\
  &\le C(n,p) \max\{1, (\be-\al)^{1/p}\}   A,
\end{align*}
that is \eqref{bound}.
The proof of Theorem \ref{main} is complete. 
\end{proof}

\section{Proof of Theorem \ref{main2}} \label{proofThm2}

Theorem \ref{main2} follows from Theorem \ref{main} 
by the arguments given in the proof of \cite[Theorem 4.1]{ParusinskiRainerAC}. We provide full details in order to 
see that the constant in the bound \eqref{multbound} depends only on the cover $\cK$ of $\overline V$ 
(apart from $m$, $n$, and $p$); 
this will be important in forthcoming work.

\begin{proof}[Proof of Theorem \ref{main2}]
  By Theorem~\ref{main}, $\la$ is absolutely continuous along affine lines parallel to 
  the coordinate axes (restricted to $V$). 
  So $\la$ possesses the partial derivatives $\p_i \la$, $i=1,\ldots,m$, which 
  are defined almost everywhere and are measurable. 
  
  Set $x=(t,y)$, where $t=x_1$, $y=(x_2,\ldots,x_m)$, 
  and let $V_1$ be the orthogonal projection of $V$ on the hyperplane $\{x_1=0\}$.
  For each $y \in V_1$ we denote by $V^y := \{t \in \R : (t,y) \in V\}$ the corresponding section of $V$; 
  note that $V^y$ is open in $\R$. 
  
  We may cover $\overline V$ by finitely many open boxes $K = I_1 \times \cdots \times I_m$ contained in $U$.
  Let $K$ be fixed and set $L = I_2 \times \cdots \times I_m$. 
  Fix $y \in V_1 \cap L$ and let $\la^y_j$, $j=1,\ldots,n$, be a continuous system of the roots of $P_a(~,y)$ on 
  $\Om^y := V^y \cap I_1$ such that $\la(~,y) = \la^y_1$; it exists 
  since $\la(~,y)$ can be completed to a continuous system of the roots of $P_a(~,y)$ on each connected component 
  of $\Om^y$ by  \cite[Lemma 6.17]{RainerN}. 
  Our goal is to bound  
  \[
    \|\p_t\la(~,y)\|_{L^p(\Om^y)} = \|(\la^y_1)'\|_{L^p(\Om^y)}
  \]
  uniformly with respect to $y \in V_1 \cap L$.

  \begin{figure}[h]
    \includegraphics[scale=0.5]{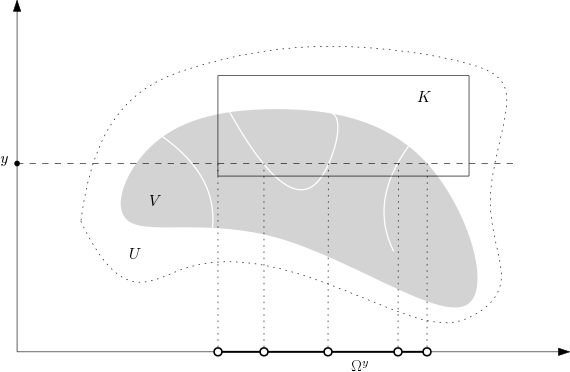}
  \end{figure}

  To this end let $\cC^y$ denote the set of connected components $J$ of the open subset $\Om^y \subseteq \R$. 
  For each $J \in \cC^y$ we extend the system of roots $\la^y_j|_J$, $j=1,\ldots,n$, continuously to $I_1$, i.e., 
  we choose continuous functions $\la^{y,J}_j$, $j = 1,\ldots, n$, on $I_1$ such that $\la^{y,J}_j|_J = \la^y_j|_J$ 
  for all $j$ and 
  \begin{gather*}
      P_a(t,y)(Z) = \prod_{j=1}^n (Z-\la^{y,J}_j(t)), ~t \in I_1.
  \end{gather*} 
  This is possible since $\la^y_j|_J$ has a continuous extension to the endpoints of the (bounded) interval $J$, by 
  \cite[Lemma 4.3]{KLMR05}, and can then be extended on the left and on the right of $J$ by a continuous system 
  of the roots of $P_a(~,y)$ after suitable permutations.

  By Theorem~\ref{main}, for each $y \in V_1\cap L$, $J \in \cC^y$, and $j=1,\ldots,n$, 
  the function $\la^{y,J}_j$ is absolutely continuous on $I_1$ and $(\la^{y,J}_j)' \in L^p(I_1)$, for $1 \le p < n/(n-1)$, with 
  \begin{equation} \label{eq:ub}
     \|(\la^{y,J}_j)'\|_{L^p(I_1)} \le C(n,p,|I_1|) \, \max_{1 \le i \le n} \|a_i\|^{1/i}_{C^{n-1,1}(\overline K)}.
  \end{equation}  

  Let $J,J_0 \in \cC^y$ be arbitrary. 
  By \cite[Lemma~3.6]{ParusinskiRainerAC}, $(\la^y_j)'$ as well as $(\la^{y,J_0}_j)'$ belong to $L^p(J)$ and 
  we have  
  \begin{equation*}
    \sum_{j=1}^n \|(\la^y_j)'\|_{L^p(J)}^p 
    = \sum_{j=1}^n \|(\la^{y,J}_j)'\|_{L^p(J)}^p = \sum_{j=1}^n \|(\la^{y,J_0}_j)'\|_{L^p(J)}^p.  
  \end{equation*}
  Thus,  
  \begin{align*}
    \sum_{j=1}^n \|(\la^y_j)'\|_{L^p(\Om^y)}^p 
    &= \sum_{J \in \cC^y} \sum_{j=1}^n \|(\la^y_j)'\|_{L^p(J)}^p 
    = \sum_{J \in \cC^y} \sum_{j=1}^n \|(\la^{y,J_0}_j)'\|_{L^p(J)}^p \\ 
    &= \sum_{j=1}^n \|(\la^{y,J_0}_j)'\|_{L^p(\Om^y)}^p 
    \le \sum_{j=1}^n \|(\la^{y,J_0}_j)'\|_{L^p(I_1)}^p.
  \end{align*}
  In particular, by \eqref{eq:ub},
  \begin{align*}
    \|\p_t\la(~,y)\|_{L^p(\Om^y)} = \|(\la^y_1)'\|_{L^p(\Om^y)} 
    \le C(n,p,K) \, \max_{1 \le i \le n} \|a_i\|^{1/i}_{C^{n-1,1}(\overline K)},
  \end{align*}
  and so, by Fubini's theorem, 
  \begin{align*} 
      \int_{V \cap K} |\p_1 \la(x)|^p\, dx &= \int_{V_1 \cap L} \int_{\Om^y} |\p_1 \la(t,y)|^p\, dt\, dy 
      \\
      &\le \Big(C(n,p,K) \, \max_{1 \le i \le n} \|a_i\|^{1/i}_{C^{n-1,1}(\overline K)} \Big)^p \int_{V_1 \cap L} \, dy,
    \end{align*}  
  and thus 
  \begin{equation*}
    \|\p_1 \la \|_{L^p(V \cap K)} \le C(n,p,K) \, \max_{1 \le i \le n} \|a_i\|^{1/i}_{C^{n-1,1}(\overline K)}.   
  \end{equation*} 
  The other partial derivatives $\p_i \la$, $i \ge 2$, are treated analogously.
  This implies \eqref{multbound}, where $W$ is the (finite) union of the boxes $K$. 
\end{proof}

\begin{remark} \label{remark4}
  This can be improved slightly if $\overline V$ has just finitely many recesses:  
  in this case the constant in \eqref{multbound} depends only on $m$, $n$, $p$, and $\on{diam}(\overline V)$.
  For simplicity let us assume that $\overline V$ is convex. Then in the previous proof we need not restrict to 
  the open boxes $K$. Instead of $I_1$ we may work with the interval $\overline {V^y}$ and \eqref{eq:ub} can be 
  replaced by
  \begin{equation} \label{eq:ub2}
     \|(\la^{y,J}_j)'\|_{L^p(\overline {V^y})} \le C(n,p) \max\{1,\on{diam}(\overline V)^{1/p}\} 
     \, \max_{1 \le i \le n} \|a_i\|^{1/i}_{C^{n-1,1}(\overline V)}.
  \end{equation} 
\end{remark}

\section{Applications} \label{applications}

In this section we present three applications of our main results Theorems \ref{main} and \ref{main2}. 
First we improve upon a result due to Spagnolo \cite{Spagnolo00} on local solvability of certain systems of 
pseudo-differential equations. Secondly, we obtain a lifting theorem for differentiable mappings into orbit spaces 
of finite group representations. 
As a third application we give a sufficient condition for a multi-valued function to be of Sobolev class 
$W^{1,p}$ in the sense of Almgren.
We also want to point out that our results were used in \cite{BeckBecker-KahnHanin16}.

\subsection{Local solvability of pseudo-differential equations} \label{PDE}

In \cite{Spagnolo00} Spagnolo proved that the pseudo-differential $n \times n$ system
\begin{align} \label{pseudo}
  u_t + i A(t,D_x) u + B(t,D_x) u = f(t,x), \qquad (t,x) \in I \times U \subseteq \R \times \R^m,
\end{align}
where $A \in C^\infty(I,S^1(\R^m))^{n \times n}$, $B \in C^0(I,S^0(\R^m))^{n \times n}$ 
are matrix symbols of order $1$ and $0$, respectively, and $A(t,\xi)$ is homogeneous 
of degree $1$ in $\xi$ for $|\xi|\ge 1$,  
is locally solvable in the Gevrey class $G^s$ for
$1 \le s \le n/(n-1)$  
and semi-globally solvable in $G^s$ for $1 < s < n/(n-1)$ under the following assumptions: 
the eigenvalues of $A(t,\xi)$ admit a parameterization $\ta_1(t,\xi),\ldots,\ta_n(t,\xi)$ such that each $\ta_j(t,\xi)$
is absolutely continuous in $t$, uniformly with respect to $\xi$, i.e.,
\begin{align} \label{A1}
  \tag{$\cA_1$}  
  |\p_t \ta_j(t,\xi)| \le \mu(t,\xi) (1+|\xi|^2)^{1/2},
\quad  \text{ with } \mu(~,\xi) \text{ equi-integrable on } I,
\end{align}
and for each $\xi$ the imaginary parts of the $\ta_j(t,\xi)$ do not change sign for varying $t$ and $j$, i.e., 
\begin{align} \label{A2}
  \tag{$\cA_2$}  
  \forall \xi \quad \text{ either } \on{Im} \ta_j(t,\xi) \ge 0,\quad \forall t,j, 
  \quad \text{ or } \on{Im} \ta_j(t,\xi) \le 0,\quad \forall t,j.
\end{align}

Theorem \ref{main} implies that the assumption \eqref{A1} is always satisfied. 
Indeed, this follows by applying Theorem \ref{main} to the characteristic polynomial of the matrix 
$(1+|\xi|^2)^{-1/2}A(t,\xi)$ and noting that the entries of $(1+|\xi|^2)^{-1/2}A(t,\xi)$ and its 
iterated partial derivatives with respect to $t$ are globally bounded in $\xi$, since $A(t,\xi)$ is a symbol of order 
$1$. 

In particular, the scalar equation
\begin{equation}
  \p_t^n u + \sum_{j=1}^n a_j(t,D_x) \p_t^{n-j} u = f(t,x),
\end{equation}
where $u, f$ are scalar functions and $a_j(t,D_x)$ is a pseudo-differential operator of order $j$ 
with principal symbol $a_j^0(t,\xi)$ smooth in $t$, 
is locally solvable in $G^s$ for $1 \le s \le n/(n-1)$ and semi-globally solvable in $G^s$ for $1 < s < n/(n-1)$ 
provided that the roots $\ta_1(t,\xi),\ldots,\ta_n(t,\xi)$ of 
\[
  (iZ)^n + \sum_{j=1}^n a_j^0(t,\xi) (iZ)^{n-j} = 0
\]
satisfy assumption \eqref{A2}; cf.\ \cite[Corollary 2]{Spagnolo00}.

A crucial tool in the proof is the technique of quasi-diagonalization for a Sylvester matrix, 
introduced by \cite{Jannelli89} for 
weakly hyperbolic problems and then refined by \cite{DAnconaSpagnolo98}.

Actually, by Theorem \ref{main}, the above conclusions hold provided that the matrix symbol $A(t,\xi)$ is just of 
class $C^{n-1,1}$ in time $t$.

\begin{theorem}
    The pseudo-differential $n \times n$ system \eqref{pseudo}, 
    where $A \in C^{n-1,1}(I,S^1(\R^m))^{n \times n}$, $B \in C^0(I,S^0(\R^m))^{n \times n}$,  
    and $A(t,\xi)$ is homogeneous 
    of degree $1$ in $\xi$ for $|\xi|\ge 1$,   
    is locally solvable in the Gevrey class $G^s$ for
    $1 \le s \le n/(n-1)$  
    and semi-globally solvable in $G^s$ for $1 < s < n/(n-1)$ provided that the eigenvalues 
    $\ta_1(t,\xi),\ldots,\ta_n(t,\xi)$ of 
    $A(t,\xi)$ satisfy \eqref{A2}.
\end{theorem}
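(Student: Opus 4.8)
The plan is to derive the theorem from Spagnolo's result \cite{Spagnolo00}, which asserts exactly the stated local and semi-global solvability in $G^s$ under the two hypotheses \eqref{A1} and \eqref{A2}; so it suffices to show that when $A$ is of class $C^{n-1,1}$ in $t$ the assumption \eqref{A1} holds automatically (hypothesis \eqref{A2} being given), and that Spagnolo's proof still applies with $A\in C^{n-1,1}(I,S^1)$ in place of $A\in C^\infty(I,S^1)$. We may assume $I$ bounded. Put $B(t,\xi):=(1+|\xi|^2)^{-1/2}A(t,\xi)$. Since $A$ is a matrix symbol of order $1$ that is $C^{n-1,1}$ in $t$, the entries of $B$ and their $t$-derivatives of order $\le n-1$ are bounded on $I\times\R^m$, and $\p_t^{\,n-1}$ of the entries is Lipschitz in $t$ uniformly in $\xi$. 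Form the characteristic polynomial
\[
  P_{a(t,\xi)}(Z)=\det\!\bigl(Z\,\mathrm{Id}-B(t,\xi)\bigr)=Z^n+\sum_{j=1}^n a_j(t,\xi)\,Z^{n-j}.
\]
Its coefficients $a_j(t,\xi)$ are universal polynomials in the entries of $B(t,\xi)$, whence $a_j(\cdot,\xi)\in C^{n-1,1}(\oI)$ and $\max_{1\le j\le n}\|a_j(\cdot,\xi)\|_{C^{n-1,1}(\oI)}$ is bounded by a constant independent of $\xi$.

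For each fixed $\xi$ pick a continuous-in-$t$ labeling of the eigenvalues of $A(t,\xi)$; since \eqref{A2} constrains only the set $\{\ta_j(t,\xi)\}_j$ and not the labeling, this labeling still satisfies \eqref{A2}, and $t\mapsto(1+|\xi|^2)^{-1/2}\ta_j(t,\xi)$ is a continuous system of roots of $P_{a(\cdot,\xi)}$. By Theorem \ref{main} each of these roots is absolutely continuous on $I$ and, for any fixed $p$ with $1<p<n/(n-1)$,
\[
  \bigl\|\p_t\bigl((1+|\xi|^2)^{-1/2}\ta_j(\cdot,\xi)\bigr)\bigr\|_{L^p(I)}\le C_0,
\]
where $C_0=C_0(n,p,|I|)$ is independent of $\xi$, by \eqref{bound} and the uniform bound on the $C^{n-1,1}$-norms of the $a_j(\cdot,\xi)$; by Lemma \ref{lem:extend} this norm does not even depend on the chosen continuous selection. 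Set $\mu(t,\xi):=\sum_{j=1}^n|\p_t((1+|\xi|^2)^{-1/2}\ta_j(t,\xi))|$. Then $|\p_t\ta_j(t,\xi)|\le\mu(t,\xi)(1+|\xi|^2)^{1/2}$ and $\|\mu(\cdot,\xi)\|_{L^p(I)}\le nC_0$ for every $\xi$; since $p>1$ and $|I|<\infty$, Hölder's inequality gives $\int_E\mu(t,\xi)\,dt\le|E|^{1-1/p}\,nC_0$ for every measurable $E\subseteq I$, so the family $\{\mu(\cdot,\xi)\}_{\xi}$ is equi-integrable on $I$. This is exactly \eqref{A1}.

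It remains to invoke \cite{Spagnolo00}. Given \eqref{A1} and \eqref{A2}, the quasi-diagonalization of the associated Sylvester matrix and the ensuing energy estimates in \cite{Spagnolo00} use the smoothness of $A$ in $t$ only through finitely many (at most $n$) time-derivatives, all of which are available for $A\in C^{n-1,1}(I,S^1)$; hence the proof goes through and yields local solvability in $G^s$ for $1\le s\le n/(n-1)$ and semi-global solvability in $G^s$ for $1<s<n/(n-1)$. The two points that genuinely need attention are: that the constant produced by Theorem \ref{main} is uniform in $\xi$ — guaranteed because $(1+|\xi|^2)^{-1/2}A(t,\xi)$ is a symbol of order $0$, so every $C^{n-1,1}(\oI)$-norm feeding into \eqref{bound} is bounded independently of $\xi$ — and that Spagnolo's argument tolerates the reduced $t$-regularity, which requires only a derivative count in \cite{Spagnolo00} and no new idea. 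The latter is the \emph{main} (though routine) obstacle.
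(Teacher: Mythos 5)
Your proof is correct and follows essentially the same route as the paper's: apply Theorem~\ref{main} to the characteristic polynomial of $(1+|\xi|^2)^{-1/2}A(t,\xi)$, use the symbol estimates to get $\xi$-uniform $C^{n-1,1}$-bounds on the coefficients and hence a $\xi$-uniform $L^p$-bound on the roots' derivatives, conclude equi-integrability (i.e.\ \eqref{A1}) since $p>1$, and then invoke \cite{Spagnolo00}. The paper's own proof is the same argument stated even more briefly; the only thing you add is an explicit acknowledgment that Spagnolo's argument must be checked to tolerate $C^{n-1,1}$ rather than $C^\infty$ coefficients, a point the paper passes over silently.
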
  

\begin{proof}
  Theorem \ref{main} implies \eqref{A1} provided that $A(t,\xi)$ is $C^{n-1,1}$ in $t$. 
  Then the proof in \cite{Spagnolo00} yields the result. 
\end{proof}

\subsection{Lifting mappings from orbit spaces} \label{lifting}

Let $G$ be a finite group and let $\rh : G \to \on{GL}(V)$ be a representation of $G$ in a finite 
dimensional complex vector space $V$. 
By Hilbert's theorem, the algebra $\C[V]^G$ of $G$-invariant polynomials on $V$ is finitely generated. 
We consider the categorical quotient 
$V \cq G$, i.e., the affine algebraic variety with coordinate ring $\C[V]^G$, and the 
morphism $\pi : V \to V \cq G$ defined by the embedding $\C[V]^G \to \C[V]$.
Since $G$ is finite, $V\cq G$ coincides with the orbit space $V/G$.
Let $\si_1,\ldots,\si_n$ be a 
system of homogeneous generators of $\C[V]^G$ with positive degrees $d_1,\ldots,d_n$. 
Then we can identify $\pi$ with the mapping of invariants $\si=(\si_1,\dots\si_n) : V \to \si(V) \subseteq \C^n$ 
and the orbit space $V/G$ with the image $\si(V)$.

Let $U \subseteq \R^m$ be open, and $k \in \N$. 
Consider a mapping $f \in C^{k-1,1}(U,\si(V))$, i.e., $f$ is of H\"older class $C^{k-1,1}$ as mapping $U \to \C^n$ 
with the image $f(U)$ contained in $\si(V) \subseteq \C^n$. 
We say that a mapping $\overline f : U \to V$ is a \emph{lift of $f$ over $\si$} if $f = \si \o \overline f$. 
It is natural to ask how \emph{regular} a lift of $f$ can be chosen.
This question is independent of the choice of generators of $\C[V]^G$, since any two choices differ by a 
polynomial diffeomorphism. 
This and similar problems were studied in \cite{AKLM00}, \cite{KLMR05}, \cite{KLMR06}, \cite{KLMR08}, \cite{KLMRadd}, 
\cite{LMRac}, \cite{RainerRG}, \cite{ParusinskiRainer14}.

\[
\xymatrix{
   && V \ar@{->>}^{\si}[d] \ar@(dr,ur)_G & \\
   U \ar_{f}[rr] \ar@{-->}^{\overline f}[rru] &&  \si(V) \ar@{=}[d] \ar@{^{ (}->}[r] & \C^n \\
   && V/G & 
}
\]

The subject of this paper, i.e., optimal regularity of roots of polynomials, is just a special case of  
this problem: let the symmetric group $\on{S}_n$ act on $\C^n$ by permuting the coordinates. Then $\C[\C^n]^{\on{S}_n}$ 
is generated by the elementary symmetric polynomials $\si_j(z) = \sum_{i_1<\cdots<i_j} z_{i_1} \cdots z_{i_j}$, 
$\C^n/\on{S}_n = \si(\C^n) = \C^n$, and $f : U \to \si(\C^n)$ amounts to a family of complex monic polynomials $P_f$ 
with coefficients $(-1)^j f_j$, $j =1,\ldots,n$, in view of Vieta's formulas. Lifting $f$ over $\si$ precisely 
means choosing the roots of $P_f$.

As an application of our main Theorems \ref{main} and \ref{main2} we obtain the following lifting result for finite groups.
Following Noether's proof of Hilbert's theorem we associated a suitable polynomial and use the regularity result for 
its roots. In the following $Gv:=\{gv : g \in G\}$ denotes the orbit through $v$.

\begin{theorem}
  Let $\rh : G \to \on{GL}(V)$ be a complex finite dimensional representation of a finite group $G$. 
  Let $\si_1,\ldots,\si_n$ be a system of homogeneous generators of $\C[V]^G$.
  Decompose $V = \bigoplus_{i=1}^\ell V_i$ into irreducible subrepresentations of $G$, and let 
  \[
    k := \max_{i=1,\ldots,\ell} \min_{v \in V_i \setminus \{0\}} |Gv|.
  \]
  Then:
  \begin{enumerate}
    \item If $c \in C^{k-1,1}(I,\si(V))$, where $I \subseteq \R$ is a compact interval, 
    then any continuous lift $\overline c \in C^0(I,V)$ of $c$ is absolutely continuous and belongs to the Sobolev space 
    $W^{1,p}(I,V)$ for every $1 \le p <k/(k-1)$.
    If $\cC$ is a bounded subset of $C^{k-1,1}(I,\si(V))$, 
    then $\overline \cC := \{\overline c \in C^0(I,V) : \si \o \overline c \in \cC\}$ is bounded in 
    $W^{1,p}(I,V)$ for every $1 \le p <k/(k-1)$.
    \item If $f \in C^{k-1,1}(U,\si(V))$, where $U \subseteq \R^m$ is open, 
    and $\overline f \in C^0(\Om,V)$ is a continuous lift of $f$ on a relatively compact open subset $\Om \Subset U$,
    then $\overline f$ belongs to the Sobolev space 
    $W^{1,p}(\Om,V)$ for every $1 \le p <k/(k-1)$.
    If $\cF$ is a bounded subset of $C^{k-1,1}(U,\si(V))$, 
    then $\overline \cF := \{\overline f \in C^0(\Om,V) : \si \o \overline f \in \cF\}$ is bounded in 
    $W^{1,p}(\Om,V)$ for every $1 \le p <k/(k-1)$.
  \end{enumerate}
\end{theorem}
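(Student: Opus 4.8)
The plan is to reduce the assertion to the case of an irreducible representation and then, following Noether's construction of invariants, to attach to a minimal $G$-orbit in the dual space $V^*$ a monic polynomial of degree $k$ whose coefficients are polynomials in $\si_1,\dots,\si_n$; its roots will recover a spanning set of linear coordinates of the lift, and the regularity will then be supplied by Theorems \ref{main} and \ref{main2}. Concretely, I would first write $\overline c = \sum_{i=1}^\ell \overline c_i$ with $\overline c_i = \pi_i\circ\overline c$, where $\pi_i\colon V\to V_i$ is the ($G$-equivariant) projection, so that it suffices to show each $\overline c_i\in W^{1,p}$. Fixing homogeneous generators $\tau^{(i)}_1,\dots,\tau^{(i)}_{N_i}$ of $\C[V_i]^G$, equivariance of $\pi_i$ gives $\tau^{(i)}_s\circ\pi_i\in\C[V]^G$, hence $\tau^{(i)}_s\circ\pi_i = Q_{i,s}(\si_1,\dots,\si_n)$ for some polynomial $Q_{i,s}$; therefore $t\mapsto\tau^{(i)}_s(\overline c_i(t)) = Q_{i,s}(c(t))$ is of class $C^{k-1,1}$, a fortiori of class $C^{k_i-1,1}$ with $k_i := \min_{v\in V_i\setminus\{0\}}|Gv|\le k$. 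Since $k_i/(k_i-1)\ge k/(k-1)$, it is enough to treat an irreducible $V$ with $k=\min_{v\in V\setminus\{0\}}|Gv|$ (and likewise in case (2), with $U$ and $\Om$ in place of $I$).

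For irreducible $V$ I would construct the orbit polynomial as follows. Using a $G$-invariant Hermitian inner product on $V$ (which exists because $G$ is finite) one obtains a conjugate-linear $G$-equivariant bijection $V\to V^*$; since it preserves the cardinalities of $G$-orbits, $\min_{\mu\in V^*\setminus\{0\}}|G\mu| = k$, and $V^*$ is again irreducible. Pick $\mu_0\in V^*$ with $|G\mu_0|=k$ and set $\{\mu_1,\dots,\mu_k\} := G\mu_0$; the linear span of this orbit is a nonzero $G$-submodule of the irreducible $V^*$, hence all of $V^*$, so $\mu_1,\dots,\mu_k$ span $V^*$. Let $e_i$ denote the $i$-th elementary symmetric polynomial and put
\[
  P(v)(Z) := \prod_{j=1}^{k}\bigl(Z-\mu_j(v)\bigr) = Z^k + \sum_{i=1}^{k} (-1)^i\, e_i\bigl(\mu_1(v),\dots,\mu_k(v)\bigr)\,Z^{k-i}.
\]
For every $g\in G$ the tuple $(\mu_1(gv),\dots,\mu_k(gv))$ is a permutation of $(\mu_1(v),\dots,\mu_k(v))$, so each coefficient $v\mapsto e_i(\mu_1(v),\dots,\mu_k(v))$ is a $G$-invariant polynomial on $V$ and hence equals $R_i(\si_1(v),\dots,\si_n(v))$ for some polynomial $R_i$.

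To conclude in case (1): for a continuous lift $\overline c\in C^0(I,V)$ of $c\in C^{k-1,1}(I,\si(V))$, the functions $t\mapsto\mu_j(\overline c(t))$, $j=1,\dots,k$, are continuous roots of the monic degree-$k$ polynomial $Z^k + \sum_{i=1}^{k}(-1)^i R_i(c(t))\,Z^{k-i}$, whose coefficients $R_i(c(\cdot))$ lie in $C^{k-1,1}(I)$ with norm controlled by a polynomial in $\max_\ell\|c_\ell\|_{C^{k-1,1}(I)}$ (recall $C^{k-1,1}(I)$ is an algebra). By Theorem \ref{main} together with \eqref{Wbound}, each $\mu_j\circ\overline c$ is absolutely continuous and lies in $W^{1,p}(I)$ for $1\le p<k/(k-1)$, with $W^{1,p}$-norm bounded in terms of $\max_\ell\|c_\ell\|_{C^{k-1,1}(I)}$ and $|I|$; since a subset of $\mu_1,\dots,\mu_k$ is a basis of $V^*$, writing $\overline c$ as the corresponding linear combination of the $\mu_j\circ\overline c$ shows $\overline c\in W^{1,p}(I,V)$ with norm bounded in the same terms, which gives uniformity over bounded families $\cC$. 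For case (2) I would run the identical argument, attaching to $\overline f$ the polynomial with coefficients $R_i(f(\cdot))\in C^{n-1,1}(U)$ and applying Theorem \ref{main2} on $\Om\Subset U$.

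The step I expect to be the main obstacle is the representation-theoretic input underlying the orbit polynomial: pinning the degree down to exactly $k$, i.e.\ the identity $\min_{\mu\in V^*\setminus\{0\}}|G\mu| = \min_{v\in V\setminus\{0\}}|Gv|$ together with the fact that a single minimal orbit already spans $V^*$ (and in particular forces one to work componentwise, since the naive minimum over all of $V$ would be too small). Once the polynomial is in place, all the analytic content is provided by Theorems \ref{main} and \ref{main2}, and the remaining points — the composition estimates for the $R_i$, the reconstruction of $\overline c$ from finitely many linear coordinates, and uniformity over bounded families — are routine.
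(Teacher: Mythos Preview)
Your proposal is correct and follows essentially the same approach as the paper: reduce to irreducible summands, pick a vector with a minimal $G$-orbit, use a $G$-invariant Hermitian inner product to turn that orbit into an orbit of linear forms, form the degree-$k$ orbit polynomial with $G$-invariant (hence polynomial in $\si$) coefficients, and apply Theorems \ref{main} and \ref{main2} to its roots; the paper writes the linear forms directly as $\ell_g(x)=\langle x,gv\rangle$ for $v\in V$ with $|Gv|=k$, which is exactly your construction phrased on the $V$-side rather than the $V^*$-side. The ``main obstacle'' you flag is not one: the $G$-equivariant (conjugate-linear) bijection $V\to V^*$ induced by the invariant inner product trivially preserves orbit cardinalities, so the minimum agrees and the chosen orbit spans $V^*$ by irreducibility, just as in the paper.
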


Note that there always exists a continuous lift $\overline c$ of $c \in C^0(I,\si(V))$; see \cite[Theorem 5.1]{LMRac}.

\begin{proof} 
  By treating the irreducible subrepresentations separately, we may assume without loss of generality that 
  $\rh$ is irreducible. 
  Fix a non-zero vector $v \in V$ such that $|Gv|$ is minimal.
  Choose a $G$-invariant Hermitian inner product $\<~,~\>$ on $V$,  
  and associate with $g \in G$ the linear form $\ell_g : V \to \C$ defined by $\ell_g (x):= \<x,gv\>$. 
  Choose a numbering of the left coset $G/G_v =\{g_1,\ldots,g_k\}$, where $G_v = \{g \in G: gv =v\}$ and $k = |Gv|$, 
  and set $\ell_i := \ell_{g_i}$ for $i=1,\ldots,k$. 
  Then the action of $G$ on $G/G_v$ by left multiplication induces a permutation of the set $\{g_1,\ldots,g_k\}$, and 
  thus
  \[
    a_j := (-1)^j \sum_{1 \le i_1<\cdots< i_j\le k} \ell_{i_1} \cdots \ell_{i_j}, \quad j=1,\ldots,k,
  \]  
  are $G$-invariant polynomials on $V$. So $a_j = p_j \o \si$ for polynomials $p_j \in \C[\C^n]$, and the polynomial
  $P_a \in \C[V]^G[Z]$ given by
  \[
    P_{a(x)}(Z) = Z^k + \sum_{j=1}^k a_j(x)Z^{k-j} = \prod_{j=1}^k (Z - \ell_j(x)), \quad x \in V, 
  \]
  factors through the polynomial $P_p \in \C[\C^n][Z]$, i.e., $P_a = P_{p \o \si}$.
  Applying Theorem \ref{main} to $P_{p(c(t))}$, $t \in I$, 
  we find that $t \mapsto \ell_i (\overline c(t))= \<\overline c(t),g_i v\>$,  $i=1,\ldots,k$, 
  belongs to $W^{1,p}(I)$ for each $1\le p <k/(k-1)$. 
  Since $\rh$ is irreducible, the orbit $Gv$ spans $V$ and (1) follows.  
  Analogously, (2) follows from Theorem \ref{main2}. 
\end{proof}

As a consequence one obtains a similar result for polar representations of reductive algebraic groups, since the 
lifting problem can be reduced to the action of the corresponding generalized Weyl group which is finite; cf.\ 
\cite{LMRac} or \cite{RainerRG}.

\subsection{Multi-valued Sobolev functions} 

In \cite{Almgren00} Almgren developed a theory of $n$-valued Sobolev functions 
and proved the existence of $n$-valued minimizers 
of the Dirichlet energy functional. See also \cite{De-LellisSpadaro11} for simpler proofs. 

An $n$-valued function is a mapping with values in the set $\cA_n(\R^\ell)$ of unordered $n$-tuples of points in $\R^\ell$. 
Let us denote by $[x] = [x_1,\ldots,x_n]$ the unordered $n$-tuple consisting of $x_1,\ldots,x_n \in \R^\ell$; then 
$[x_1,\ldots,x_n] = [x_{\si(1)},\ldots,x_{\si(n)}]$ for each permutation $\si \in \on{S}_n$. 
The set $\cA_n(\R^\ell) = \{[x] = [x_1,\ldots,x_n] : x_i \in \R^\ell\}$ forms a 
complete metric space when endowed with the metric
\[
  d([x],[y]) := \min_{\si \in \on{S}_n} \Big(\sum_{i=1}^n |x_i - y_{\si(i)}|^2\Big)^{1/2}.
\] 
Almgren proved that 
there is an integer $N= N(n,\ell)$, a positive constant $C= C(n,\ell)$, and an  
injective mapping $\De : \cA_n(\R^\ell) \to \R^N$ such that 
$\on{Lip}(\De) \le 1$ and $\on{Lip}(\De|^{-1}_{\De(\cA_n(\R^\ell))}) \le C$; moreover, there is a
Lipschitz retraction of $\R^N$ onto $\De(\cA_n(\R^\ell))$.

One can use this bi-Lipschitz embedding to define Sobolev spaces of $n$-valued functions: 
for open $U \subseteq \R^m$ and $1 \le p \le \infty$ define 
\[
  W^{1,p}(U,\cA_n(\R^\ell)) := \{f : U \to \cA_n(\R^\ell) : \De \o f \in W^{1,p}(U,\R^N)\}.  
\]
For an intrinsic definition see \cite[Definition~0.5 and Theorem~2.4]{De-LellisSpadaro11}.

Let us identify $\R^2 \cong \C$. 
Theorem \ref{main} implies a sufficient condition for an $n$-valued function $U \to \cA_n(\C)$ to belong to the 
Sobolev spaces $W^{1,p}(U,\cA_n(\C))$ for every $1 \le p < n/(n-1)$; see Theorem \ref{multivalued} below.

We shall use the following terminology. By a \emph{parameterization} of an $n$-valued function $f : U \to \cA_n(\C)$ we mean a
function $\vh: U \to \C^n$ such that $f(x) = [\vh(x)] = [\vh_1(x),\ldots,\vh_n(x)]$ for all $x \in U$. 
Let $\pi : \C^n \to \cA_n(\C)$ be defined by $\pi(z) := [z]$; it is a Lipschitz mapping with $\on{Lip}(\pi) = 1$.
Then a parameterization of $f$ amounts to a lift $\vh$ of $f$ over $\pi$, i.e.,
$f = \pi \o \vh$.  
The elementary symmetric polynomials induce a bijective mapping  
$a : \cA_n(\C) \to \C^n$, 
\[
  a_j ([z_1,\ldots,z_n]) := (-1)^j \sum_{i_1<\cdots<i_j} z_{i_1} \cdots z_{i_j}, \quad  1 \le j \le n.
\]
In other words, monic complex polynomials of degree $n$ are in one-to-one correspondence 
with their unordered $n$-tuples of roots.

\begin{theorem} \label{multivalued}
  Let $U \subseteq \R^m$ be open and
  let $f : U \to \cA_n(\C)$ be continuous.
  If $a \o f \in C^{n-1,1}(U,\C^n)$, then $f \in W^{1,p}(V,\cA_n(\C))$ for each relatively compact open $V \Subset U$ 
  and each $1 \le p < n/(n-1)$. 
  Moreover,
  \begin{equation*}
    \|\nabla (\De \o f)\|_{L^p(V)} 
    \le C(m,n,p,\cK,\De) \big(1 + \max_{1 \le j \le n} \|a_j \o f\|^{1/j}_{C^{n-1,1}(\overline W)}\big),
  \end{equation*}
  where $\cK$ is any finite cover of $\overline V$ by open boxes $\prod_{i=1}^m (\al_i,\be_i)$ 
  contained in $U$ and $W = \bigcup \cK$.
\end{theorem}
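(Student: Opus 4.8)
The plan is to reduce the statement to Theorem \ref{main2} via the one-to-one correspondence between $n$-valued functions and monic polynomials, together with the Lipschitz properties of the Almgren embedding $\De$ and of $\pi : \C^n \to \cA_n(\C)$. First I would fix a relatively compact open $V \Subset U$ and a finite cover $\cK$ of $\overline V$ by open boxes $\prod_{i=1}^m(\al_i,\be_i)$ contained in $U$, and set $W := \bigcup \cK$. Since $a \o f \in C^{n-1,1}(U,\C^n)$, the polynomial $P_{a \o f}$ is precisely a polynomial of the form \eqref{polynomialmulti} with coefficients $(a_j \o f) \in C^{n-1,1}(U)$ (up to the harmless sign $(-1)^j$, which does not affect $C^{n-1,1}$-norms). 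The $n$-tuple $f(x)$ is exactly the unordered tuple of roots of $P_{a(f(x))}$, so a parameterization of $f$ is the same as a choice of roots of this polynomial.

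The next step is to produce, locally on each box $K \in \cK$, a \emph{continuous} choice of the roots. On a single box $K$, which is connected and simply connected, any continuous $n$-valued map into $\cA_n(\C)$ admits a continuous selection of its values into $\C^n$; this is the classical fact that a curve of monic polynomials (and, since $K$ is simply connected, also a multiparameter family without monodromy obstruction) admits a continuous system of roots — one may invoke \cite[Ch.~II Theorem~5.2]{Kato76} along a path and use simple connectivity to rule out monodromy, or cite the corresponding statement used in the proof of Theorem \ref{main2}. Write $\vh^K = (\vh^K_1,\ldots,\vh^K_n) : K \to \C^n$ for such a continuous lift, so that $f|_K = \pi \o \vh^K$ and $a \o f|_K$ equals the coefficient vector of $P_{\vh^K}$. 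By Theorem \ref{main2} applied to $P_{a \o f}$ on $V \cap K$ with the cover $\cK$, each component $\vh^K_i$ lies in $W^{1,p}(V \cap K)$ for every $1 \le p < n/(n-1)$, with
\begin{equation*}
  \|\nabla \vh^K_i\|_{L^p(V \cap K)} \le C(m,n,p,\cK) \max_{1 \le j \le n} \|a_j \o f\|^{1/j}_{C^{n-1,1}(\overline W)}.
\end{equation*}
Hence $\vh^K \in W^{1,p}(V \cap K,\C^n)$ with the same bound (up to a factor depending on $n$).

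Now I would assemble the global estimate. Since $\De$ is Lipschitz with $\on{Lip}(\De) \le 1$ and $\pi$ is Lipschitz with $\on{Lip}(\pi)=1$, the composition $\De \o f|_{V \cap K} = \De \o \pi \o \vh^K$ is a Lipschitz image of the $W^{1,p}$-map $\vh^K$, so $\De \o f \in W^{1,p}(V \cap K,\R^N)$ with $|\nabla(\De \o f)| \le |\nabla \vh^K|$ almost everywhere on $V \cap K$ (chain rule for Sobolev maps composed with Lipschitz functions). As the boxes $K \in \cK$ cover $\overline V$, a finite partition-of-unity / direct patching argument over the finitely many pieces $V \cap K$ shows $\De \o f \in W^{1,p}(V,\R^N)$ — the local $W^{1,p}$-representatives agree on overlaps because $\De \o f$ is a fixed continuous function independent of the chosen lift — and summing the $L^p$-norms over $K \in \cK$ gives
\begin{equation*}
  \|\nabla(\De \o f)\|_{L^p(V)} \le C(m,n,p,\cK,\De)\big(1 + \max_{1 \le j \le n} \|a_j \o f\|^{1/j}_{C^{n-1,1}(\overline W)}\big),
\end{equation*}
the additive $1$ absorbing the case where the polynomial is identically zero on some component (handled as in the proof of Theorem \ref{main}) and any constant arising from the Lipschitz constant of $\De|^{-1}$. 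By definition of $W^{1,p}(V,\cA_n(\C))$ this is exactly the claim.

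The routine points are the Sobolev chain rule under Lipschitz post-composition and the patching of finitely many local $W^{1,p}$-estimates, both standard. The one genuine subtlety — and the step I expect to require the most care — is the existence of a \emph{continuous} lift $\vh^K$ of $f$ on each box: one must use that $f$ is already assumed continuous into $\cA_n(\C)$ and that a box is simply connected, so there is no monodromy obstruction to pulling the continuous $n$-valued map up to a continuous $\C^n$-valued map; for a general $V$ this could fail, which is exactly why the hypothesis only asserts $f \in W^{1,p}(V,\cA_n(\C))$ rather than the existence of a global continuous parameterization, and why working box-by-box (where Theorem \ref{main2}'s cover-dependent constant is already built in) is the natural route.
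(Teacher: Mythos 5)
The key claim in your proposal---that on a simply connected box $K\subseteq\R^m$ a continuous $n$-valued map $f:K\to\cA_n(\C)$ admits a continuous lift $\vh^K:K\to\C^n$---is false for $m\ge 2$, and this gap is fatal. The projection $\pi:\C^n\to\cA_n(\C)$ is not a fibration but a \emph{branched} covering: at points where the corresponding polynomial has a multiple root, path-lifting is not unique, so simple connectivity does not rule out monodromy. The standard counterexample is $P_{a(z)}(Z)=Z^n-z$ on $\C\cong\R^2$: the coefficients are real analytic, the unordered tuple of roots is a continuous map into $\cA_n(\C)$, yet there is no continuous selection of a root on any neighborhood of the origin, since traversing a small loop around $0$ cyclically permutes the $n$-th roots. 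The paper itself makes exactly this point in the sentence preceding Theorem~\ref{main2} ("monodromy may prevent the existence of continuous roots") and in the third Open Problem. Kato's theorem, which you invoke, is genuinely a one-parameter result; there is no multiparameter analogue without a nonvanishing-discriminant hypothesis, which cannot be assumed here. Consequently you cannot apply Theorem~\ref{main2} on $K$ as you propose, because its hypothesis is precisely that a continuous root exists.

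The paper's proof avoids this by never attempting a global (even local-in-$x$) lift. It works \emph{line by line}: for each coordinate direction $e_i$ and each $y$ in the projection of $K$ onto $e_i^\perp$, the restriction $t\mapsto f(y+te_i)$ is a one-parameter family, which always admits a continuous parameterization $\vh_{i,y}$ on the interval $I_i$; Theorem~\ref{main} (the one-dimensional result, not Theorem~\ref{main2}) then bounds $\|\vh_{i,y}'\|_{L^p(I_i)}$, post-composition with the Lipschitz map $\De\o\pi$ preserves $W^{1,p}$ via \cite[Theorem~1]{MarcusMizel79}, and Fubini's theorem assembles these uniform one-dimensional estimates into a bound on $\|\p_i(\De\o f)\|_{L^p(V\cap K)}$. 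That Fubini-along-coordinate-lines device is the essential idea your proposal is missing. Your later steps---Lipschitz post-composition, the remark that $\De\o f$ is a fixed continuous function independent of the choice of lift, and summing over the finitely many boxes in $\cK$---are all sound, but they rest on a lift that in general does not exist.
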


\begin{proof}
  Fix $V \Subset U$.
  We must show that $\De \o f$ is an element of $W^{1,p}(V ,\R^N)$.
  Clearly, $\De \o f : U \to \R^N$ is continuous. 
  The set $V$ can be covered by finitely many open boxes $K = \prod_{i=1}^m I_i$ contained in U.
  Let $e_i$ be the $i$th standard unit vector in $\R^m$. 
  Denote by $K_i$ the orthogonal projection of $K$ onto the hyperplane $e_i^\bot$. 
  For each $y \in K_i$ we have $I_i = \{t \in \R : y+te_i \in K\}$.

  By Theorem \ref{main}, 
  $I_i \ni t \mapsto f(y+te_i)$ admits an absolutely continuous parameterization $\vh_{i,y}$ such that, 
  for $1 \le p < n/(n-1)$, 
  \begin{align*}  
   \| \vh_{i,y}' \|_{L^p(I_i)}  
   &\le C(n,p)\max\{1,|I_i|^{1/p}\}   
   \max_{1 \le j \le n} \|a_j \o f\|^{1/j}_{C^{n-1,1}(\overline K)}.
  \end{align*}  
  Thus, 
  $I_i \ni t \mapsto \De (f(y+te_i)) = \De (\pi(\vh_{i,y}(t)))$ is absolutely continuous and 
  \begin{align*}  
   \| (\De \o \pi \o \vh_{i,y})' \|_{L^p(I_i)}  
   &\le C(m,n,p,|I_i|, \De) \big(1 +  
   \max_{1 \le j \le n} \|a_j \o f\|^{1/j}_{C^{n-1,1}(\overline K)}\big),
  \end{align*}
  since composition with the Lipschitz mapping $\De \o \pi$ maps $W^{1,p}(I_i,\C^n)$ to $W^{1,p}(I_i,\R^N)$
  in a bounded way; see \cite[Theorem 1]{MarcusMizel79}.  
  By Fubini's theorem,
  \begin{align*}
    \int_K |\p_i (\De \o f)|^p \, dx =  \int_{K_i} \int_{I_i} |(\De \o \pi \o \vh_{i,y})'|^p \, dt \, dy, 
  \end{align*}
  and the statement follows.
\end{proof}

\[
\xymatrix{
   &&& \C^n \ar@{->>}^{\pi}[d] \ar[drr] && \\
   I_i \ar@{^{ (}->}[r] \ar@<1ex>^{\vh_{i,y}}[urrr]
   &K \ar^{f}[rr] \ar[drr]  &&  \cA_n(\C) \ar@<-2pt>_{a}[d] \ar@{^{ (}->}^{\De}[rr] && \R^N \\
   &&& \C^n \ar@<-2pt>_{a^{-1}}[u] && 
}
\]

In particular, the roots of a polynomial $P_a$ of degree $n$ with coefficients 
$a_j \in C^{n-1,1}(U)$, $j = 1,\ldots,n$, form an $n$-valued function  
$\la : U \to \cA_n(\C)$ which belongs to $W^{1,p}_{\on{loc}}(U,\cA_n(\C))$ for each $1 \le p < n/(n-1)$; 
in fact, it is well-known that $\la : U \to \cA_n(\C)$ is 
continuous (cf.\ \cite{Kato76} or \cite[Theorem 1.3.1]{RS02}).
Theorem \ref{multivalued} implies that the push-forward  
\begin{equation*}
  (a^{-1})_* : C^{n-1,1}(U,\C^n) \to \bigcap_{1 \le p <n/(n-1)} W^{1,p}_{\on{loc}}(U,\cA_n(\C)).
\end{equation*}
is a bounded mapping.

We remark that much more is true in the case of \emph{real} $n$-valued functions. 
In this situation the elementary symmetric polynomials induce a bijective mapping $a : \cA_n(\R) \to H_n$, 
where $H_n$ is a closed semialgebraic subset of $\R^n$, namely, the space of \emph{hyperbolic} polynomials 
of degree $n$ (i.e., polynomials with all roots real). Then the mapping
\begin{equation*}
  (a^{-1})_*  : C^{n-1,1}(U,H_n) \to C^{0,1}(U,\cA_n(\R)),
\end{equation*}
is bounded.
It is easy to see that the projection $\pi : \R^n \to \cA_n(\R)$ admits a continuous section $\th$, for instance, by 
ordering the components increasingly.
Then
we have a bounded mapping   
\begin{equation*}
  (\th \o a^{-1})_* : C^{n-1,1}(U,H_n) \to C^{0,1}(U,\R^n).
\end{equation*}
All this essentially follows from Bronshtein's theorem \cite{Bronshtein79}; see \cite{ParusinskiRainerHyp}.

\begin{remark}
  Let $\Ph : \cA_n(\C) \to \cA_n(\R)$ be a Lipschitz function.
  If $f \in  W^{1,p}(U,\cA_n(\C))$, then $\Ph \o f \in W^{1,p}(U,\cA_n(\R))$ and it admits a parameterization 
  $\th \o \Ph \o f \in W^{1,p}(U,\R^n)$.     
  This follows (again by \cite[Theorem 1]{MarcusMizel79}) 
  from the following diagram in which all vertical arrows are Lipschitz: the arrows in the lower 
  row by Almgren's results, 
  and $\th$ is Lipschitz, since $d([x],[y]) = |\th([x]) - \th([y])|$ for $[x],[y] \in \cA_n(\R)$.
  \[
    \xymatrix{
    &&&& \R^n \ar@<-3pt>@{->>}_{\pi}[d] \\
    U \ar@<1ex>[rrrru] \ar^{f}[rr] && \cA_n(\C) \ar@<-3pt>@{_{ (}->}_{\De_2}[d] \ar^{\Ph}[rr] 
    && \cA_n(\R) \ar@<-3pt>_{\th}[u] \ar@<-3pt>@{_{ (}->}_{\De_1}[d]  \\
    && \R^{N_2} \ar@<-3pt>[u] \ar[rr] && \R^{N_1} \ar@<-3pt>[u] 
    }
  \]
  Every Lipschitz function $\ph : \C \to \R$ induces a Lipschitz functions $\Ph : \cA_n(\C) \to \cA_n(\R)$ by setting 
  $\Ph([z]) := [\ph(z_1),\ldots,\ph(z_n)]$. 
  In particular, we can take $\vh(z) = |z|$, $\vh(z) = \Re (z)$, or $\vh(z) = \Im(z)$. 
  In view of Theorem \ref{multivalued}, we may conclude that the real and imaginary parts of the roots of 
  a monic polynomial $P_a$ of degree $n$ with coefficients in $C^{n-1,1}(U)$ admit continuous parameterization 
  that are of class $W^{1,p}_{\on{loc}}(U,\R^n)$ for each $1 \le p < n/(n-1)$. The same holds for the absolute values.
  But note that real and imaginary parts of the roots do not allow continuous parameterizations simultaneously!     
\end{remark}

\appendix

\section{Illustration of the proof in simple cases} \label{appendix}

Let us illustrate the proof of Theorem \ref{main} for polynomials $P_a$ of degree 3 and 4. 
For simplicity we assume that $P_a$ is in Tschirnhausen form.

\subsection*{Degree 3}

In degree 3 
Proposition \ref{induction} is trivial: the factors of a splitting are at most of degree 2; 
so \eqref{muab} reduces to $\|\mu'\|_{L^p(I)} = \|(\tilde b_2^{1/2})' \|_{L^p(I)}$ if $n_b =2$ and   
$\mu \equiv 0$ if $n_b=1$.  

Let $(\al,\be) \subseteq \R$ be a bounded open interval.
Let 
\[
	P_{\tilde a(t)}(Z) = Z^3 + \tilde a_2(t) Z + \tilde a_3(t), \quad   t \in (\al,\be), 
\]
be a monic polynomial of degree 3
in Tschirnhausen form with coefficients $\tilde a_2, \tilde a_3 \in C^{2,1}([\al,\be])$.
We may use Lemma \ref{lem:whitney} to extend $\tilde a_2, \tilde a_3$ to functions in $C^{2,1}([\hat \al,\hat \be])$, 
where $\hat \al = \al -1$ and $\hat \be = \be +1$, such that 
\begin{itemize}
	\item \eqref{whitney} holds for $n=3$, and
	\item for $t_0 \in (\al,\be)$ and $k \in \{2,3\}$ satisfying 
\begin{equation}
	|\tilde a_k(t_0)|^{1/k} = \max_{j=2,3} |\tilde a_j(t_0)|^{1/j} \ne 0, 
\end{equation}
and a constant $B$ satisfying \eqref{eq:constB} for $n=3$, 
there is an open interval $I \subseteq (\hat \al,\hat \be)$ containing $t_0$ such that 
\begin{equation} \label{d3:keya}
	M |I| + \|(\tilde a_2^{1/2})'\|_{L^1(I)} + \|(\tilde a_3^{1/3})'\|_{L^1(I)} = B |\tilde a_k(t_0)|^{1/k},
\end{equation}
where
\begin{equation} \label{d3:M}
	M = \max_{j=2,3} (\Lip_I(\tilde a_j^{(2)}))^{1/3} |\tilde a_k(t_0)|^{(3-j)/(3k)}.
\end{equation}
\end{itemize}
We have a splitting $P_{\tilde a}(t) = P_{b}(t) P_{b^*}(t)$, $t \in I$ (see Lemma \ref{lem:assThm1implyassProp3}).

\subsubsection*{Case $n_b = 2$}
In this case 
\[
	P_{b(t)}(Z) = Z^2 + b_1(t) Z + b_2(t), \quad t \in I,  
\] 
and after Tschirnhausen transformation 
\[
	P_{\tilde b(t)}(Z) = Z^2 +\tilde b_2(t), \quad t \in I.  
\]
The coefficients $b_1$, $b_2$, and $\tilde b_2$ are given by \eqref{eq:b_i0} and \eqref{eq:b_i} for $n_b =2$. 
They are of class $C^{2,1}(\overline I)$ since $\tilde a_k$ does not vanish on $I$ (by \eqref{eq:ass11}). 
If $\mu \in C^0(I)$ is a continuous root of $P_{\tilde b}$, then Lemma \ref{lem:B} and Lemma \ref{lem:Lpbak} imply 
\begin{equation} \label{d3:mu}
	\|\mu'\|^*_{L^p(I)} = \|(\tilde b_2^{1/2})' \|^*_{L^p(I)} \le C(p) |I|^{-1} |\tilde a_k(t_0)|^{1/k}, \quad  1\le p < 2.
\end{equation}
Moreover, by \eqref{b1prime},
\begin{equation} \label{d3:b1}
	\|b_1'\|^*_{L^p(I)} \le  C |I|^{-1} |\tilde a_k(t_0)|^{1/k}.
\end{equation}

\subsubsection*{Case $n_b = 1$}
In this case $P_{b(t)}(Z) = Z + b_1(t)$, $P_{\tilde b(t)}(Z) = Z$, and $\mu \equiv 0$.
In particular, \eqref{d3:mu} and \eqref{d3:b1} are still valid.

Let $\la \in C^0((\al,\be))$ be a continuous root of $P_{\tilde a}$. 
We extend $\la$ continuously to $(\hat \al,\hat \be)$ such that $\la$ is a root of $P_{\tilde a}$ on $(\hat \al,\hat \be)$. 
Assume that, on $I$, $\la$ is a root of $P_b$; then
\[
	\la(t) = - \frac{b_1(t)}{n_b} + \mu(t), \quad t \in I.	
\] 
By \eqref{d3:mu}, \eqref{d3:b1}, and \eqref{inclusions0},
\begin{align*} 
	\|\la'\|^*_{L^p(I)} 
	&\le C(p) |I|^{-1} |\tilde a_k(t_0)|^{1/k}
	\\ \notag
	&= C(p)B^{-1} \Big( M  + \|(\tilde a_2^{1/2})'\|^*_{L^1(I)} + \|(\tilde a_3^{1/3})'\|^*_{L^1(I)} \Big)
	\\ \notag
	&\le C(p)B^{-1} \Big( \hat A  + \|(\tilde a_2^{1/2})'\|^*_{L^p(I)} + \|(\tilde a_3^{1/3})'\|^*_{L^p(I)} \Big), 
\end{align*}
where $\hat A := \max_{j=2,3} \|\tilde a_j\|^{1/j}_{C^{2,1}([\hat \al,\hat \be])}$ which dominates $M$ as defined in 
\eqref{d3:M} (see the proof of Lemma \ref{lem:local}).
By Proposition \ref{cover} 
(applied to $\tilde a_j$ instead of $\tilde b_i$ and \eqref{d3:keya} instead of \eqref{assumption2aG}) 
and Lemma \ref{lem:extend}, we may conclude that 
$\la$ is absolutely continuous on $(\hat \al,\hat \be)$ and satisfies
\[
	\|\la'\|_{L^p((\hat \al,\hat \be))} \le C(p) \Big( \hat A (\hat \be - \hat \al)^{1/p} 
	+ \|(\tilde a_2^{1/2})'\|_{L^p((\hat \al,\hat \be))} + \|(\tilde a_3^{1/3})'\|_{L^p((\hat \al,\hat \be))} \Big);
\]
the constant $B$ is universal.
Using \eqref{est} and \eqref{whitney}, we find 
\[
	\|\la'\|_{L^p((\al,\be))} 
	\le C(p) \max\{1,(\be -\al)^{1/p}\} \max_{j=2,3} \|\tilde a_j\|^{1/j}_{C^{2,1}([\al,\be])}, \quad 1 \le p < 3/2. 
\]

\subsection*{Degree 4}

In degree 4 the interesting case is when after splitting one of the factors has degree 3. 
Then the conclusion of Proposition \ref{induction} is obtained by a second splitting which further reduces the degree.

Let $(\al,\be) \subseteq \R$ be a bounded open interval.
Let 
\[
	P_{\tilde a(t)}(Z) = Z^4 + \tilde a_2(t) Z^2 + \tilde a_3(t) Z + \tilde a_4(t), \quad   t \in (\al,\be), 
\]
be a monic polynomial of degree 4
in Tschirnhausen form with coefficients $\tilde a_2, \tilde a_3, \tilde a_4 \in C^{3,1}([\al,\be])$. 
As in degree 3 
we may assume that $\tilde a_2, \tilde a_3, \tilde a_4$ are functions in $C^{3,1}([\hat \al,\hat \be])$
(where $\hat \al = \al -1$ and $\hat \be = \be +1$)
such that 
\begin{itemize}
	\item \eqref{whitney} holds for $n=4$, and
	\item for $t_0 \in (\al,\be)$ and $k \in \{2,3,4\}$ satisfying 
\begin{equation}
	|\tilde a_k(t_0)|^{1/k} = \max_{j=2,3,4} |\tilde a_j(t_0)|^{1/j} \ne 0, 
\end{equation}
and a constant $B$ satisfying \eqref{eq:constB} for $n=4$, 
there is an open interval $I \subseteq (\hat \al,\hat \be)$ containing $t_0$ such that 
\begin{equation} \label{d4:keya}
	M |I| + \sum_{j=2}^4\|(\tilde a_j^{1/j})'\|_{L^1(I)}  = B |\tilde a_k(t_0)|^{1/k},
\end{equation}
where
\begin{equation} \label{d4:M}
	M = \max_{j=2,3,4} (\Lip_I(\tilde a_j^{(3)}))^{1/4} |\tilde a_k(t_0)|^{(4-j)/(4k)}.
\end{equation}
\end{itemize}
We have a splitting $P_{\tilde a}(t) = P_{b}(t) P_{b^*}(t)$, $t \in I$.

\subsubsection*{Case $n_b=3$} In this case
\[
	P_{b(t)}(Z) = Z^3 + b_1(t) Z^2 + b_2(t) Z + b_3(t), \quad t \in I,  
\] 
and after Tschirnhausen transformation 
\[
	P_{\tilde b(t)}(Z) = Z^3 +\tilde b_2(t)Z + \tilde b_3(t), \quad t \in I.  
\]
The coefficients $b_1$, $b_2$, $b_3$ and $\tilde b_2$, $\tilde b_3$ are given by \eqref{eq:b_i0} and \eqref{eq:b_i} 
for $n_b =3$. 
They are of class $C^{3,1}(\overline I)$ since $\tilde a_k$ does not vanish on $I$. 

In this situation we have to work harder to obtain the conclusion of Proposition \ref{induction}: we must split again.
Let $I' := I \setminus \{t \in I : \tilde b_2 (t) = \tilde b_3 (t) = 0\}$. 
For each $t_1 \in I'$ choose $\ell \in \{2,3\}$ such that 
\[
	|\tilde b_\ell(t_0)|^{1/\ell} = \max_{j=2,3} |\tilde b_i(t_0)|^{1/i} \ne 0.
\]
There is an open interval $J=J(t_1)$, $t_1 \in J \subseteq I'$, such that 
\begin{equation} \label{d4:keyb}
	| J|  |I|^{-1}  {|\tilde a_k(t_0)|^{1/k}}  
  + \|(\tilde b _2^{1/2})'\|_{L^1 (J)} + \|(\tilde b _3^{1/3})'\|_{L^1 (J)} =  D |\tilde b_\ell(t_1)|^{1/\ell},
\end{equation}
for a constant $D$ satisfying \eqref{D} for $n_b=3$. 
Then we have a splitting $P_{\tilde b}(t) = P_{c}(t) P_{c^*}(t)$, $t \in J$; see Section \ref{subintervals} and p.~\pageref{D}. 

Let $\mu \in C^0(I)$ be a continuous root of $P_{\tilde b}$. 
We may assume that 
\[
	\tilde \mu (t) := \mu(t) + \frac{c_1(t)}{n_c}, \quad  t \in J ,
\]
is a root of $P_{\tilde c}$ in $J$. We have $n_c \le 2$.
If $n_c =2$, then, 
in analogy to \eqref{d3:mu} and \eqref{d3:b1},
\begin{equation} \label{d4:timu}
	\|\tilde \mu'\|^*_{L^p(J)} = \|(\tilde c_2^{1/2})' \|^*_{L^p(J)} \le C(p) |J|^{-1} |\tilde b_\ell(t_1)|^{1/\ell}, 
	\quad  1\le p < 2.
\end{equation}
and
\begin{equation} \label{d4:c1} 
	\|c_1'\|^*_{L^p(J)} \le  C |J|^{-1} |\tilde b_\ell(t_1)|^{1/\ell}.
\end{equation}
In the case that $n_c=1$ we have $P_{c(t)}(Z) = Z + c_1(t)$, $P_{\tilde c(t)}(Z) = Z$, and $\tilde \mu \equiv 0$. 
In particular, \eqref{d4:timu} and \eqref{d4:c1} are still valid.

Thus, \eqref{d4:keyb}, \eqref{d4:timu}, \eqref{d4:c1}, and \eqref{inclusions0} imply 
\begin{align*}
	\|\mu'\|^*_{L^p(J)} 
	&\le C(p) |J|^{-1} |\tilde b_\ell(t_1)|^{1/\ell}
	\\
	&= C(p)D^{-1}  
	\Big(  |I|^{-1}  {|\tilde a_k(t_0)|^{1/k}}  
  + \|(\tilde b _2^{1/2})'\|^*_{L^1 (J)} + \|(\tilde b _3^{1/3})'\|^*_{L^1 (J)}\Big)
  \\
	&\le C(p)D^{-1}  
	\Big(  |I|^{-1}  {|\tilde a_k(t_0)|^{1/k}}  
  + \|(\tilde b _2^{1/2})'\|^*_{L^p (J)} + \|(\tilde b _3^{1/3})'\|^*_{L^p (J)}\Big).
\end{align*}
Using Proposition \ref{cover} to extract a countable subcollection of $\{J(t_1)\}_{t_1 \in I'}$, 
$\si$-additivity of $\|\cdot\|^p_{L^p}$ to glue the $L^p$-estimates, and Lemma \ref{lem:extend} to extend the 
estimate to $I$, we obtain
\begin{equation*} 
	\|\mu'\|_{L^p(I)} \le C(p) \Big( \| |I|^{-1}  {|\tilde a_k(t_0)|^{1/k}} \|_{L^p(I)} 
  + \|(\tilde b _2^{1/2})'\|_{L^p (I)} + \|(\tilde b _3^{1/3})'\|_{L^p (I)}\Big),
\end{equation*}
that is the conclusion of Proposition \ref{induction} (the constant $D$ is universal).
With Lemma \ref{lem:B} and Lemma \ref{lem:Lpbak} we may conclude 
\begin{equation} \label{d4:mu}
	\|\mu'\|^*_{L^p(I)} \le C(p) |I|^{-1}  {|\tilde a_k(t_0)|^{1/k}}, \quad 1 \le p < 3/2. 
\end{equation}

\subsubsection*{Case $n_b\le 2$} In this case \eqref{d4:mu} follows from \eqref{d3:mu} and \eqref{d3:b1}.

Let $\la \in C^0((\al,\be))$ be a continuous root of $P_{\tilde a}$. 
We extend $\la$ continuously to $(\hat \al,\hat \be)$ such that $\la$ is a root of $P_{\tilde a}$ on $(\hat \al,\hat \be)$. 
Assume that, on $I$, $\la$ is a root of $P_b$; then
\[
	\la(t) = - \frac{b_1(t)}{n_b} + \mu(t), \quad t \in I.	
\]
By \eqref{d4:mu}, \eqref{d3:b1}, \eqref{d4:keya}, and \eqref{inclusions0},
\begin{align*} 
	\|\la'\|^*_{L^p(I)} 
	&\le C(p) |I|^{-1} |\tilde a_k(t_0)|^{1/k}
	\\ \notag
	&= C(p)B^{-1} \Big( M  + \sum_{j=2}^4\|(\tilde a_j^{1/j})'\|^*_{L^1(I)}\Big)
	\\ \notag
	&\le C(p)B^{-1} \Big( \hat A  +  \sum_{j=2}^4\|(\tilde a_j^{1/j})'\|^*_{L^p(I)}\Big), 
\end{align*}
where $\hat A := \max_{j=2,3,4} \|\tilde a_j\|^{1/j}_{C^{3,1}([\hat \al,\hat \be])}$ dominates $M$ as defined in \eqref{d4:M}.
As in the end of the proof for degree 3, we may use
Proposition \ref{cover} and Lemma \ref{lem:extend} to glue the $L^p$-estimates, and 
\eqref{est} and \eqref{whitney} to conclude 
\[
	\|\la'\|_{L^p((\al,\be))} 
	\le C(p) \max\{1,(\be -\al)^{1/p}\} \max_{j=2,3,4} \|\tilde a_j\|^{1/j}_{C^{3,1}([\al,\be])}, \quad 1 \le p < 4/3. 
\]


\def\cprime{$'$}
\providecommand{\bysame}{\leavevmode\hbox to3em{\hrulefill}\thinspace}
\providecommand{\MR}{\relax\ifhmode\unskip\space\fi MR }
\providecommand{\MRhref}[2]{%
  \href{http://www.ams.org/mathscinet-getitem?mr=#1}{#2}
}
\providecommand{\href}[2]{#2}


\end{document}